\documentclass[12pt]{amsart}
\usepackage{amssymb}
\usepackage{amsthm}
\usepackage[margin=1in]{geometry}
\usepackage{amstext}
\usepackage{amsbsy}
\usepackage{amscd}
\usepackage{enumerate}
\usepackage{chngpage}
\usepackage{mathtools}
\usepackage{amsmath}
\usepackage{hyperref}
\usepackage{stmaryrd}
\usepackage{color}
\usepackage{enumitem}
\usepackage{lmodern}
\usepackage[T1]{fontenc}

\usepackage{tikz,tikz-qtree,tikz-qtree-compat,fullpage,adjustbox}
\usepackage{pst-plot}
\usepackage{etoolbox}
\usetikzlibrary{positioning}
\providecommand\circledcolorednumb{}\renewcommand\circledcolorednumb[2]{\tikz[baseline=(char.center)]{\node[shape = circle,draw, inner sep = 2pt,fill=#1](char)    {\phantom{00}};\node[anchor=center] at (char.center) {\makebox(0,0){\large{{\sf #2}}}};}}
\providecommand\dotscircles{}\renewcommand\dotscircles{{\bf \dots}}
\robustify{\circledcolorednumb}
\providecommand\nongap{}\renewcommand\nongap[1]{\circledcolorednumb{yellow}{#1}}
\providecommand\gap{}\renewcommand\gap[1]{\circledcolorednumb{black!20}{\phantom{#1}}}
\providecommand\generator{}\renewcommand\generator[1]{\circledcolorednumb{orange!80}{#1}}

\usepackage{tikz}


\DeclareMathOperator{\Ap}{Ap}
\DeclareMathOperator{\eg}{eg}

\newcommand{\Q}{{\mathbb Q}}
\newcommand{\Z}{{\mathbb Z}}
\newcommand{\N}{{\mathbb N}}
\newcommand{\R}{{\mathbb R}}
\newcommand{\T}{{\mathcal T}}
\newcommand{\Sg}{{\mathcal S}}

\theoremstyle{plain}\newtheorem{proposition}{Proposition}[section]
	\newtheorem{theorem}[proposition]{Theorem}
	\newtheorem{defn}[proposition]{Definition}
	\newtheorem{cor}[proposition]{Corollary}
	\newtheorem{lemma}[proposition]{Lemma}
	\newtheorem{conj}[proposition]{Conjecture}
	
	\newtheorem{question}[proposition]{Question}
	\newtheorem{prop}[proposition]{Proposition}
\newtheorem{thm}[proposition]{Theorem}
\newtheorem{example}[proposition]{Example}
	\newtheorem{remark}[proposition]{Remark}


\widowpenalty10000
\clubpenalty10000

\title{Ordinarization Numbers of Numerical Semigroups}
\author{Sogol Cyrusian}
\address{Department of Mathematics, University of California, Santa Barbara, CA 93106}
\email{sogol@ucsb.edu}
\author{Nathan Kaplan}
\address{Department of Mathematics, University of California, Irvine, CA 92697}
\email{nckaplan@math.uci.edu}
\date{\today}
\begin{document}

		\begin{abstract}
There has been significant recent interest in studying how the number of numerical semigroups of genus $g$ behaves as a function of $g$.  Bras-Amor\'os has shown how to organize the collection of numerical semigroups of genus $g$ into a rooted tree called the ordinarization tree.  The ordinarization number of a numerical semigroup $S$ is the length of the path from $S$ back to the root of the tree.  We study the problem of counting numerical semigroups of genus $g$ with a fixed ordinarization number $r$.  We show how this can be interpreted as a counting problem about integer points in a certain rational polyhedral cone and use ideas from Ehrhart theory to study this problem.  We give a formula for the number of numerical semigroups of genus $g$ and ordinarization number $2$, building on the corresponding result of Bras-Amor\'os for ordinarization number $1$.  We show that the ordinarization number of a numerical semigroup generated by two elements is equal to the number of integer points in a certain right triangle with rational vertices.  We consider the analogous problem for supersymmetric numerical semigroups with more generators.  We also study ordinarization numbers of numerical semigroups generated by an interval.
	\end{abstract}

	\maketitle

	\section{Introduction}

Let $\N_0 = \{0,1,2,\ldots\}$ be the set of nonnegative integers.  A \emph{numerical semigroup} $S \subseteq \N_0$ is an additive submonoid with finite complement in $\N_0$.  The elements of $\N_0 \setminus S$ are the \emph{gaps} of $S$.  The largest of these gaps is the \emph{Frobenius number} of $S$, denoted $F(S)$, and the number of these gaps is the \emph{genus} of $S$, denoted $g(S)$.  The \emph{multiplicity} of a numerical semigroup $S$, denoted $m(S)$, is its smallest nonzero element.  We recommend the book of Garc\'ia-S\'anchez and Rosales as a general reference for the theory of numerical semigroups \cite{GarciaSanchez_Rosales}.

Let $N(g)$ denote the number of numerical semigroups of genus $g$.  In an influential paper from 2008, Bras-Amor\'os computed the first $50$ values of $N(g)$ and made several conjectures about the growth of this function \cite{Bras-Amoros_fib}.  Zhai determined the asymptotic growth of this sequence, proving two of these conjectures.  Throughout this paper, we write $\varphi = \frac{1+\sqrt{5}}{2}$ for the Golden ratio.
\begin{thm}\cite[Theorem 1]{Zhai}\label{thm:Zhai}
There exists a positive real number $C$ such that 
\[
\lim_{g \rightarrow \infty} \frac{N(g)}{\varphi^g} = C.
\]
\end{thm}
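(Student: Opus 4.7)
The plan is to exploit the rooted tree structure on numerical semigroups introduced by Bras-Amor\'os. Its root is $\N_0$ (genus $0$), and the children of a semigroup $S$ are obtained by removing a single effective generator (a minimal generator of $S$ that exceeds $F(S)$). Writing $e(S)$ for the number of effective generators of $S$, one has the exact identity
\[
N(g+1) = \sum_{g(S)=g} e(S),
\]
so every asymptotic statement about $N(g)$ can in principle be extracted from the branching statistics of this tree.

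I would begin with the lower bound, aiming first for the Bras-Amor\'os Fibonacci-type inequality $N(g+2) \geq N(g+1) + N(g)$. The natural strategy is to build an injection from the disjoint union of the semigroups of genus $g$ and genus $g+1$ into those of genus $g+2$, by attaching a short canonical branch to each ancestor in a way that one can uniquely recover the ancestor from the image. Iterating this inequality gives $N(g) \gg \varphi^g$, pinning down the correct exponential rate from below.

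The matching upper bound $N(g) \ll \varphi^g$ is where the real work lies. Here I would stratify the semigroups of genus $g$ by their multiplicity $m$ and by the set of small elements lying in the Ap\'ery window $[m, 2m)$, which controls how many children a node has in the tree. For each fixed window profile, the count as $g$ varies satisfies a linear recursion whose characteristic polynomial should have dominant root $\varphi$. Contributions from the two tail regimes (very small $m$ and $m$ close to the ordinary bound $g+1$) can be estimated by a Chernoff-type bound, confining the main mass to an intermediate range of $m$ of width $O(\sqrt{g})$.

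The main obstacle is upgrading these matching exponential bounds to an actual limit: showing that $N(g+1)/N(g)$ converges to $\varphi$ rather than oscillating around it. My plan is to set up a transfer operator acting on window profiles whose spectral radius is exactly $\varphi$, verify a spectral gap, and invoke a Perron--Frobenius or renewal-theoretic argument to extract the constant $C$. Establishing the spectral gap quantitatively, equivalently ruling out near-degeneracies in the branching statistics for large-multiplicity semigroups, is the crux of the argument and appears to require a delicate combinatorial analysis of the "typical" shape of a high-genus semigroup, essentially of the flavor carried out by Zhai.
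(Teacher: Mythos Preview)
The paper does not prove this theorem. It is quoted verbatim from Zhai's article and used purely as background for the later counting questions; there is no argument in the paper to compare your proposal against. So the exercise of matching your approach to ``the paper's own proof'' is moot.

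That said, your outline has a substantive problem worth flagging. You propose to ``begin with the lower bound, aiming first for the Bras-Amor\'os Fibonacci-type inequality $N(g+2) \ge N(g+1)+N(g)$.'' As the paper explicitly notes immediately after stating Zhai's theorem, this inequality is still an open conjecture; even the weaker monotonicity $N(g)\ge N(g-1)$ is not known. Zhai's result establishes the limit $N(g)/\varphi^g\to C$ \emph{without} proving either of these inequalities, so structuring the lower bound around an injection that would yield the Fibonacci inequality is asking for something strictly stronger than what the theorem requires and stronger than anyone has managed. A workable lower bound instead comes from restricting to a tractable subfamily (e.g.\ semigroups with $F(S)<2m(S)$ or $F(S)<3m(S)$) that already grows like a constant times $\varphi^g$.

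Your upper-bound and convergence sketch is closer in spirit to what Zhai actually does---stratify by multiplicity, show that semigroups with $F(S)\ge 3m(S)$ are negligible, and analyze the remaining ones via a recursion whose dominant eigenvalue is $\varphi$---but the phrase ``establishing the spectral gap quantitatively \ldots\ appears to require a delicate combinatorial analysis \ldots\ essentially of the flavor carried out by Zhai'' is not a proof step; it is a pointer to the very paper whose theorem you are trying to reprove. As written, the proposal is a plausible table of contents rather than an argument.
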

\noindent More recently, Zhu has shown that $C > 3.8293$ and speculates that it is probably between $3.85$ and $3.86$ \cite[Proposition 7.2]{Zhu}.  For much more on this problem of counting numerical semigroups of given genus, see the survey of the second author \cite{KaplanSurvey}, papers of Bacher \cite{Bacher} and of Zhu \cite{Zhu}, and the paper of Delgado, Eliahou, and Fromentin where they compute $N(g)$ for all $g \le 75$ \cite{DelgadoEliahouFromentin}.  In a very recent preprint, Bras-Amor\'os describes a new algorithm to compute $N(g)$ and uses it to compute $N(76)$ and $N(77)$ \cite{BA_Exploring}.

The following conjecture from the original paper of Bras-Amor\'os remains open.
\begin{conj}[Bras-Amor\'os]\cite{Bras-Amoros_fib}
For all $g \ge 2$ we have $N(g) \ge N(g-1) + N(g-2)$.
\end{conj}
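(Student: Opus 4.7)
The natural framework for this conjecture is the Bras-Amor\'os semigroup tree $\mathcal{T}$, rooted at $\mathbb{N}_0$, in which the children of a semigroup $T$ are the semigroups $T\setminus\{x\}$ as $x$ ranges over the \emph{effective generators} of $T$, namely those minimal generators of $T$ strictly greater than $F(T)$. Writing $e(T)$ for the number of effective generators, the starting identity is
\[
N(g) \;=\; \sum_{g(T)=g-1} e(T),
\]
since every semigroup of genus $g$ has a unique parent of genus $g-1$, namely $T\cup\{F(T)\}$. The conjecture is therefore equivalent to the reformulation
\[
\sum_{g(T)=g-1}\bigl(e(T)-1\bigr) \;\geq\; N(g-2),
\]
which relocates the problem from a comparison across genera to a statement about the distribution of $e(T)$ at a single level of the tree.

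The plan is to construct an explicit injection $\Psi$ from the set of numerical semigroups of genus $g-2$ into the set of pairs $(T,x)$ with $g(T)=g-1$ and $x$ an effective generator of $T$ distinct from a designated ``canonical'' effective generator of $T$ (for example its largest one). Given $U$ of genus $g-2$, I would produce $\Psi(U)=(T,x)$ by removing two carefully chosen elements of $U$. The simplest candidate for the first element is the multiplicity $m(U)$: removing it is always legal because no sum of two positive elements of $U$ can equal $m(U)$, so $T:=U\setminus\{m(U)\}$ is a numerical semigroup of genus $g-1$. I would then try to specify $x$ either as a second minimal generator of $U$ or as a newly created effective generator of $T$, and establish injectivity by exhibiting an inverse procedure that recovers $U$ from the pair $(T,x)$. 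Alternative starting data include the two largest minimal generators of $U$, or the pair consisting of $F(U)+m(U)$ and $m(U)$, each of which offers a different inverse map to test.

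The principal obstacle, and the reason this conjecture has resisted all approaches since 2008, is that $e(T)$ can be very small. It equals zero whenever every minimal generator of $T$ lies below $F(T)$ (as for $T=\langle 3,4\rangle$ of genus $3$), and it equals one for infinite families such as $\langle 2,2k+1\rangle$. Such $T$ contribute nothing usable to the reformulated sum, so any injection $\Psi$ must land in the comparatively scarce $T$ with $e(T)\geq 2$. A uniform structural description of how $e(T)$ is distributed across semigroups of fixed genus is not currently available, which is exactly what a proof of this kind requires. Theorem~\ref{thm:Zhai}, combined with the Fibonacci identity $\varphi^2=\varphi+1$, shows that the inequality holds in the limit as $g\to\infty$, so the conjecture is at least consistent with the known asymptotic growth; the hard part will be promoting this asymptotic behavior into a bound valid at every $g\geq 2$, without any effective estimate on the lower-order error in $N(g)/\varphi^g$.
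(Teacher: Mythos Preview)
The paper does not prove this statement. It is presented explicitly as an open conjecture: immediately before the statement the paper says ``The following conjecture from the original paper of Bras-Amor\'os remains open,'' and even the weaker Conjecture~\ref{Ng_inc} (that $N(g)\ge N(g-1)$) is noted as unknown. There is therefore no proof in the paper to compare your proposal against.

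Your proposal is also not a proof, and to your credit you say so clearly. You set up the standard semigroup-tree reformulation $\sum_{g(T)=g-1}(e(T)-1)\ge N(g-2)$ and sketch an injection strategy, but then correctly identify the obstruction: semigroups with $e(T)\in\{0,1\}$ are abundant (e.g.\ $\langle 2,2k+1\rangle$ has $e=1$), so any injection $\Psi$ must avoid them, and no structural control on the distribution of $e(T)$ is known. You also correctly note that Theorem~\ref{thm:Zhai} together with $\varphi^2=\varphi+1$ gives the inequality asymptotically but not for every $g$. This is an honest account of the state of the problem, but it is a research outline with an acknowledged gap, not a proof. Since the conjecture is genuinely open, that is the appropriate conclusion.
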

Even the following weaker version of this conjecture is not known.
\begin{conj}[Bras-Amor\'os]\label{Ng_inc}
For all $g \ge 1$ we have $N(g) \ge N(g-1)$.
\end{conj}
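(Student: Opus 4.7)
The plan is to construct an injection from the set of numerical semigroups of genus $g-1$ into the set of numerical semigroups of genus $g$. The most natural candidate is the ``drop the multiplicity'' map $S \mapsto S \sminus \{m(S)\}$: this is well defined, since $m(S)$ is always a minimal generator of $S$ so its removal preserves closure, and it does produce a semigroup of genus $g(S)+1$. Unfortunately this map is not injective: a given $T$ of genus $g$ can admit several preimages $T \cup \{k\}$ for different $k < m(T)$ that happen to be semigroups, and this genuine lack of injectivity is why the conjecture cannot be settled by a one-line argument.

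To use the framework of this paper, I would refine the problem along the ordinarization tree. Let $N_r(g)$ denote the number of numerical semigroups of genus $g$ with ordinarization number $r$; then $N(g) = \sum_{r \ge 0} N_r(g)$, and the conjecture would follow from the stronger monotonicity statement $N_r(g) \ge N_r(g-1)$ for every $r$. The case $r = 0$ is the trivial identity $1 = 1$, since only the ordinary semigroup sits at each root. For $r = 1$ one would verify monotonicity directly from Bras-Amor\'os's formula, and for $r = 2$ from the formula proved later in this paper; these low-$r$ cases would serve both as sanity checks and as test cases for the general strategy.

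For arbitrary $r$, the plan is to build a level-preserving injection between the ordinarization trees of consecutive genera. Given $S$ at depth $r$ in the genus-$(g-1)$ tree, trace its ordinarization path $S = S_r \to S_{r-1} \to \cdots \to S_0$ to the ordinary root. The Ehrhart-theoretic description of this paper encodes such a path as a lattice point in a rational polyhedral cone $C_g^{(r)}$, so that $N_r(g)$ equals $|C_g^{(r)} \cap \Z^{d(r)}|$ for a suitable dimension $d(r)$. Monotonicity $N_r(g) \ge N_r(g-1)$ would then follow from a uniform injection $C_{g-1}^{(r)} \cap \Z^{d(r)} \hookrightarrow C_g^{(r)} \cap \Z^{d(r)}$, most cleanly given by a translation or unimodular shift compatible with the passage from genus $g-1$ to genus $g$.

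The main obstacle is that the cones $C_g^{(r)}$ depend on $g$ in a subtle, non-uniform way as $r$ grows: their defining inequalities shift in ways that do not line up across different $r$, and there is no a priori reason that the componentwise monotonicity $N_r(g) \ge N_r(g-1)$ should hold. Any proof of the conjecture will therefore likely require aggregating across levels in a nontrivial way, rather than proceeding componentwise. This is precisely why the conjecture has resisted attack for over fifteen years and is presently known only asymptotically via Zhai's Theorem~\ref{thm:Zhai}; a full resolution would require a genuinely new idea for producing the extra semigroups of genus $g$ from those of genus $g-1$.
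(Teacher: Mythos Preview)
The statement you were asked to prove is an \emph{open conjecture}; the paper itself does not prove it and explicitly says so (``Even the following weaker version of this conjecture is not known''). There is therefore no paper proof against which to compare your attempt.

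Your proposal is not a proof either, and to your credit you say as much in the final paragraph. What you have written is a survey of plausible strategies together with an honest assessment of why each one stalls: the map $S\mapsto S\setminus\{m(S)\}$ is not injective; the level-by-level inequality $n_{g,r}\ge n_{g-1,r}$ is precisely Conjecture~\ref{nrg_conj}, which the paper also leaves open except for $r\le 2$; and the hoped-for lattice-point injection between the cones $C_{g-1}^{(r)}$ and $C_g^{(r)}$ has no construction behind it. None of this is wrong as commentary, but none of it establishes $N(g)\ge N(g-1)$.

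If the assignment was to reproduce the paper's argument, the correct response is that there is nothing to reproduce: Conjecture~\ref{Ng_inc} remains unresolved, and the paper's contribution is to verify the finer Conjecture~\ref{nrg_conj} for $r=2$ and to show that each $n_{g,r}$ is eventually quasipolynomial in $g$, not to settle monotonicity in general.
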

There has been significant interest in proving partial results towards this conjecture.  To give one notable example, Eliahou and Fromentin show that if you restrict to the subset of numerical semigroups of genus $g$ for which $F(S) < 3m(S)$, a subset that includes `most' numerical semigroups as $g \rightarrow \infty$, then a version of this conjecture is true \cite{EliahouFromentin}.

Several authors have proposed refined conjectures that would imply Conjecture \ref{Ng_inc}. For example, see the conjecture of the second author about the number of numerical semigroups of genus $g$ and fixed multiplicity \cite[Conjecture 7]{KaplanJPAA}, and the conjecture of Almeida and Bernardini about the number of gapsets of genus $g$ that are pure $\kappa$-sparse \cite[Section 6]{AlmeidaBernardini}.  Our paper is partially motivated by a  conjecture of Bras-Amor\'os about semigroups with given genus and ordinarization number. We recall some additional background and then state the conjecture.

Bras-Amor\'os introduced the ordinarization transform of a numerical semigroup \cite{Bras-Amoros_ord}.  The \emph{ordinary numerical semigroup of genus $g$} is $S_g = \{0,g+1,g+2,g+3,\ldots\}$.  This is the semigroup whose gaps are the positive integers that are less than or equal to $g$.  Let $S$ be a numerical semigroup of genus $g$ that is not equal to $S_g$.  It is clear that $S' = S \cup \{F(S)\} \setminus \{m(S)\}$ is also a numerical semigroup of genus $g$.  If $S' \neq S_g$, repeating this process gives another numerical semigroup $S'' = S' \cup \{F(S')\} \setminus \{m(S')\}$ that has genus $g$.  Noting that $F(S'') < F(S') < F(S)$ and $m(S'') > m(S') > m(S)$, we see that after finitely many iterations of this process we arrive at the semigroup $S_g$.  In this way, we can organize the set of all numerical semigroups of genus $g$ into a graph.  Since every semigroup of genus $g$ has a uniquely determined path back to the ordinary semigroup of genus $g$, this graph is a tree. Bras-Amor\'os calls this the \emph{ordinarization tree} of numerical semigroups of genus $g$ and denotes it by $\T_g$.

\begin{figure}\label{fig_T7}

{\begin{tikzpicture}[grow'=right, sibling distance=0.600000mm]\tikzset{level 1+/.style={level distance=14.000000cm}}\node (arbre) at (current page.north) {\adjustbox{max width=\textwidth,max height=.9\textheight} { \Tree[.{\begin{tabular}{c}{$\nongap{0}\gap{1}\gap{2}\gap{3}\gap{4}\gap{5}\gap{6}\gap{7}\generator{8}\generator{9}\generator{10}\generator{11}\generator{12}\generator{13}\generator{14}\generator{15}\dotscircles$} \\\end{tabular}} [.{\begin{tabular}{c}{$\nongap{0}\gap{1}\gap{2}\gap{3}\gap{4}\nongap{5}\gap{6}\gap{7}\gap{8}\generator{9}\nongap{10}\generator{11}\generator{12}\generator{13}\nongap{14}\nongap{15}\dotscircles$} \\\end{tabular}} ][.{\begin{tabular}{c}{$\nongap{0}\gap{1}\gap{2}\gap{3}\gap{4}\gap{5}\nongap{6}\gap{7}\gap{8}\generator{9}\generator{10}\generator{11}\nongap{12}\generator{13}\generator{14}\nongap{15}\dotscircles$} \\\end{tabular}} [.{\begin{tabular}{c}{$\nongap{0}\gap{1}\gap{2}\gap{3}\gap{4}\nongap{5}\nongap{6}\gap{7}\gap{8}\gap{9}\nongap{10}\nongap{11}\nongap{12}\generator{13}\generator{14}\nongap{15}\dotscircles$} \\\end{tabular}} ][.{\begin{tabular}{c}{$\nongap{0}\gap{1}\gap{2}\nongap{3}\gap{4}\gap{5}\nongap{6}\gap{7}\gap{8}\nongap{9}\gap{10}\generator{11}\nongap{12}\generator{13}\nongap{14}\nongap{15}\dotscircles$} \\\end{tabular}} ][.{\begin{tabular}{c}{$\nongap{0}\gap{1}\gap{2}\nongap{3}\gap{4}\gap{5}\nongap{6}\gap{7}\gap{8}\nongap{9}\nongap{10}\gap{11}\nongap{12}\nongap{13}\generator{14}\nongap{15}\dotscircles$} \\\end{tabular}} ][.{\begin{tabular}{c}{$\nongap{0}\gap{1}\gap{2}\gap{3}\gap{4}\nongap{5}\nongap{6}\gap{7}\gap{8}\nongap{9}\nongap{10}\nongap{11}\nongap{12}\gap{13}\nongap{14}\nongap{15}\dotscircles$} \\\end{tabular}} ]][.{\begin{tabular}{c}{$\nongap{0}\gap{1}\gap{2}\gap{3}\gap{4}\gap{5}\gap{6}\nongap{7}\gap{8}\generator{9}\generator{10}\generator{11}\generator{12}\generator{13}\nongap{14}\generator{15}\dotscircles$} \\\end{tabular}} [.{\begin{tabular}{c}{$\nongap{0}\gap{1}\gap{2}\gap{3}\gap{4}\nongap{5}\gap{6}\nongap{7}\gap{8}\gap{9}\nongap{10}\generator{11}\nongap{12}\generator{13}\nongap{14}\nongap{15}\dotscircles$} \\\end{tabular}} ][.{\begin{tabular}{c}{$\nongap{0}\gap{1}\gap{2}\gap{3}\gap{4}\gap{5}\nongap{6}\nongap{7}\gap{8}\gap{9}\generator{10}\generator{11}\nongap{12}\nongap{13}\nongap{14}\generator{15}\dotscircles$} \\\end{tabular}} ][.{\begin{tabular}{c}{$\nongap{0}\gap{1}\gap{2}\gap{3}\gap{4}\gap{5}\nongap{6}\nongap{7}\gap{8}\nongap{9}\gap{10}\generator{11}\nongap{12}\nongap{13}\nongap{14}\nongap{15}\dotscircles$} \\\end{tabular}} ][.{\begin{tabular}{c}{$\nongap{0}\gap{1}\gap{2}\gap{3}\gap{4}\nongap{5}\gap{6}\nongap{7}\gap{8}\nongap{9}\nongap{10}\gap{11}\nongap{12}\generator{13}\nongap{14}\nongap{15}\dotscircles$} \\\end{tabular}} ][.{\begin{tabular}{c}{$\nongap{0}\gap{1}\gap{2}\gap{3}\gap{4}\gap{5}\nongap{6}\nongap{7}\gap{8}\nongap{9}\nongap{10}\gap{11}\nongap{12}\nongap{13}\nongap{14}\nongap{15}\dotscircles$} \\\end{tabular}} ][.{\begin{tabular}{c}{$\nongap{0}\gap{1}\gap{2}\gap{3}\gap{4}\nongap{5}\gap{6}\nongap{7}\gap{8}\nongap{9}\nongap{10}\nongap{11}\nongap{12}\gap{13}\nongap{14}\nongap{15}\dotscircles$} \\\end{tabular}} ]][.{\begin{tabular}{c}{$\nongap{0}\gap{1}\gap{2}\gap{3}\nongap{4}\gap{5}\gap{6}\gap{7}\nongap{8}\gap{9}\generator{10}\generator{11}\nongap{12}\generator{13}\nongap{14}\nongap{15}\dotscircles$} \\\end{tabular}} ][.{\begin{tabular}{c}{$\nongap{0}\gap{1}\gap{2}\gap{3}\gap{4}\nongap{5}\gap{6}\gap{7}\nongap{8}\gap{9}\nongap{10}\generator{11}\generator{12}\nongap{13}\generator{14}\nongap{15}\dotscircles$} \\\end{tabular}} ][.{\begin{tabular}{c}{$\nongap{0}\gap{1}\gap{2}\gap{3}\gap{4}\gap{5}\nongap{6}\gap{7}\nongap{8}\gap{9}\generator{10}\generator{11}\nongap{12}\generator{13}\nongap{14}\generator{15}\dotscircles$} \\\end{tabular}} [.{\begin{tabular}{c}{$\nongap{0}\gap{1}\gap{2}\gap{3}\nongap{4}\gap{5}\nongap{6}\gap{7}\nongap{8}\gap{9}\nongap{10}\gap{11}\nongap{12}\generator{13}\nongap{14}\generator{15}\dotscircles$} \\\end{tabular}} [.{\begin{tabular}{c}{$\nongap{0}\gap{1}\nongap{2}\gap{3}\nongap{4}\gap{5}\nongap{6}\gap{7}\nongap{8}\gap{9}\nongap{10}\gap{11}\nongap{12}\gap{13}\nongap{14}\generator{15}\dotscircles$} \\\end{tabular}} ]][.{\begin{tabular}{c}{$\nongap{0}\gap{1}\gap{2}\gap{3}\nongap{4}\gap{5}\nongap{6}\gap{7}\nongap{8}\gap{9}\nongap{10}\nongap{11}\nongap{12}\gap{13}\nongap{14}\nongap{15}\dotscircles$} \\\end{tabular}} ]][.{\begin{tabular}{c}{$\nongap{0}\gap{1}\gap{2}\gap{3}\gap{4}\gap{5}\gap{6}\nongap{7}\nongap{8}\gap{9}\generator{10}\generator{11}\generator{12}\generator{13}\nongap{14}\nongap{15}\dotscircles$} \\\end{tabular}} [.{\begin{tabular}{c}{$\nongap{0}\gap{1}\gap{2}\gap{3}\nongap{4}\gap{5}\gap{6}\nongap{7}\nongap{8}\gap{9}\gap{10}\nongap{11}\nongap{12}\generator{13}\nongap{14}\nongap{15}\dotscircles$} \\\end{tabular}} ][.{\begin{tabular}{c}{$\nongap{0}\gap{1}\gap{2}\gap{3}\gap{4}\gap{5}\nongap{6}\nongap{7}\nongap{8}\gap{9}\gap{10}\generator{11}\nongap{12}\nongap{13}\nongap{14}\nongap{15}\dotscircles$} \\\end{tabular}} ][.{\begin{tabular}{c}{$\nongap{0}\gap{1}\gap{2}\gap{3}\gap{4}\nongap{5}\gap{6}\nongap{7}\nongap{8}\gap{9}\nongap{10}\gap{11}\nongap{12}\nongap{13}\nongap{14}\nongap{15}\dotscircles$} \\\end{tabular}} ][.{\begin{tabular}{c}{$\nongap{0}\gap{1}\gap{2}\gap{3}\gap{4}\gap{5}\nongap{6}\nongap{7}\nongap{8}\gap{9}\nongap{10}\gap{11}\nongap{12}\nongap{13}\nongap{14}\nongap{15}\dotscircles$} \\\end{tabular}} ][.{\begin{tabular}{c}{$\nongap{0}\gap{1}\gap{2}\gap{3}\nongap{4}\gap{5}\gap{6}\nongap{7}\nongap{8}\gap{9}\nongap{10}\nongap{11}\nongap{12}\gap{13}\nongap{14}\nongap{15}\dotscircles$} \\\end{tabular}} ]][.{\begin{tabular}{c}{$\nongap{0}\gap{1}\gap{2}\gap{3}\nongap{4}\gap{5}\gap{6}\gap{7}\nongap{8}\nongap{9}\gap{10}\generator{11}\nongap{12}\nongap{13}\generator{14}\nongap{15}\dotscircles$} \\\end{tabular}} ][.{\begin{tabular}{c}{$\nongap{0}\gap{1}\gap{2}\gap{3}\gap{4}\gap{5}\nongap{6}\gap{7}\nongap{8}\nongap{9}\gap{10}\generator{11}\nongap{12}\generator{13}\nongap{14}\nongap{15}\dotscircles$} \\\end{tabular}} [.{\begin{tabular}{c}{$\nongap{0}\gap{1}\gap{2}\nongap{3}\gap{4}\gap{5}\nongap{6}\gap{7}\nongap{8}\nongap{9}\gap{10}\nongap{11}\nongap{12}\gap{13}\nongap{14}\nongap{15}\dotscircles$} \\\end{tabular}} ]][.{\begin{tabular}{c}{$\nongap{0}\gap{1}\gap{2}\gap{3}\gap{4}\gap{5}\gap{6}\nongap{7}\nongap{8}\nongap{9}\gap{10}\generator{11}\generator{12}\generator{13}\nongap{14}\nongap{15}\dotscircles$} \\\end{tabular}} [.{\begin{tabular}{c}{$\nongap{0}\gap{1}\gap{2}\gap{3}\gap{4}\gap{5}\nongap{6}\nongap{7}\nongap{8}\nongap{9}\gap{10}\gap{11}\nongap{12}\nongap{13}\nongap{14}\nongap{15}\dotscircles$} \\\end{tabular}} ]][.{\begin{tabular}{c}{$\nongap{0}\gap{1}\gap{2}\gap{3}\nongap{4}\gap{5}\gap{6}\gap{7}\nongap{8}\nongap{9}\nongap{10}\gap{11}\nongap{12}\nongap{13}\nongap{14}\generator{15}\dotscircles$} \\\end{tabular}} ][.{\begin{tabular}{c}{$\nongap{0}\gap{1}\gap{2}\gap{3}\gap{4}\nongap{5}\gap{6}\gap{7}\nongap{8}\nongap{9}\nongap{10}\gap{11}\generator{12}\nongap{13}\nongap{14}\nongap{15}\dotscircles$} \\\end{tabular}} ][.{\begin{tabular}{c}{$\nongap{0}\gap{1}\gap{2}\gap{3}\gap{4}\gap{5}\nongap{6}\gap{7}\nongap{8}\nongap{9}\nongap{10}\gap{11}\nongap{12}\generator{13}\nongap{14}\nongap{15}\dotscircles$} \\\end{tabular}} ][.{\begin{tabular}{c}{$\nongap{0}\gap{1}\gap{2}\gap{3}\gap{4}\gap{5}\gap{6}\nongap{7}\nongap{8}\nongap{9}\nongap{10}\gap{11}\generator{12}\generator{13}\nongap{14}\nongap{15}\dotscircles$} \\\end{tabular}} ][.{\begin{tabular}{c}{$\nongap{0}\gap{1}\gap{2}\gap{3}\gap{4}\nongap{5}\gap{6}\gap{7}\nongap{8}\nongap{9}\nongap{10}\nongap{11}\gap{12}\nongap{13}\nongap{14}\nongap{15}\dotscircles$} \\\end{tabular}} ][.{\begin{tabular}{c}{$\nongap{0}\gap{1}\gap{2}\gap{3}\gap{4}\gap{5}\gap{6}\nongap{7}\nongap{8}\nongap{9}\nongap{10}\nongap{11}\gap{12}\generator{13}\nongap{14}\nongap{15}\dotscircles$} \\\end{tabular}} ][.{\begin{tabular}{c}{$\nongap{0}\gap{1}\gap{2}\gap{3}\gap{4}\gap{5}\nongap{6}\gap{7}\nongap{8}\nongap{9}\nongap{10}\nongap{11}\nongap{12}\gap{13}\nongap{14}\nongap{15}\dotscircles$} \\\end{tabular}} ][.{\begin{tabular}{c}{$\nongap{0}\gap{1}\gap{2}\gap{3}\gap{4}\gap{5}\gap{6}\nongap{7}\nongap{8}\nongap{9}\nongap{10}\nongap{11}\nongap{12}\gap{13}\nongap{14}\nongap{15}\dotscircles$} \\\end{tabular}} ]] }  }; \node[align = center, below =.001\textwidth of arbre]
{\resizebox{.2\textwidth}{!}{{\ }}};\end{tikzpicture}}
\caption{$\T_7$, the ordinarization tree of numerical semigroups of genus $7$. This figure was created using the website of Bras-Amor\'os for drawing trees of numerical semigroups \cite{BA_draw}.}
\end{figure}
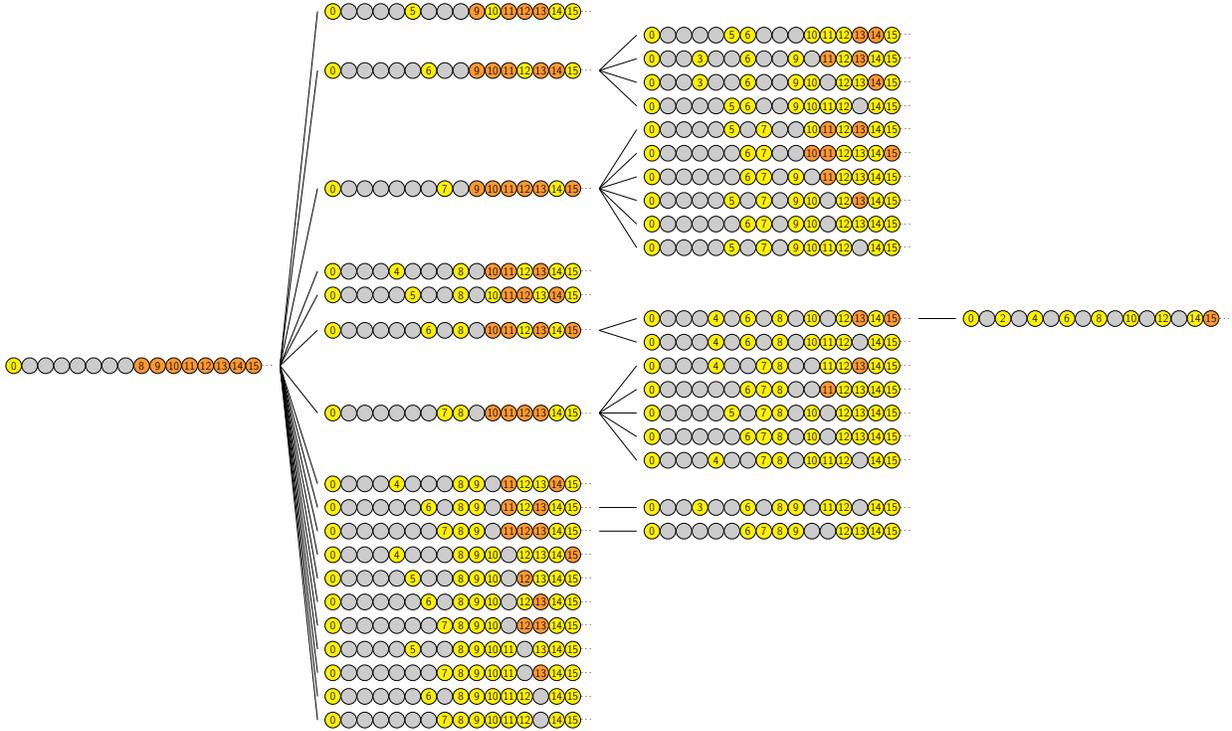

The \emph{ordinarization number} of a numerical semigroup $S$ of genus $g$ is the length of the path from $S$ back to $S_g$ in the ordinarization tree $\T_g$.  We denote this by $r(S)$.  For example, in Figure \ref{fig_T7} we see that there is a unique semigroup of genus $7$ with ordinarization number $3$, and that other than this one and the ordinary semigroup $S_7$, every other semigroup of genus $7$ has $r(S) \in \{1,2\}$.  Thinking about how the ordinarization transform changes a numerical semigroup, it is not difficult to prove the following characterization of $r(S)$.
\begin{prop}\cite[Lemma 1]{Bras-Amoros_ord}\label{Prop_count_small}
Let $S$ be a numerical semigroup of genus $g$.  Then $r(S) = \#\{S \cap \{1,2,\ldots, g\}\}$.
\end{prop}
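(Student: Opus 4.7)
The plan is to track how the quantity $k(S) := \#(S \cap \{1,2,\ldots,g\})$ changes under the ordinarization transform, and to show it drops by exactly one at each step until we reach $S_g$.

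First I would verify two auxiliary facts for any numerical semigroup $S$ of genus $g$ with $S \neq S_g$. (i) Since both $S$ and $S_g$ have genus $g$ but are distinct, the sets of gaps differ; because $S_g$ has all of $\{1,\ldots,g\}$ as its gap set, $S$ must contain some element of $\{1,\ldots,g\}$, hence $m(S) \le g$. (ii) If $F(S) \le g$, then the $g$ gaps of $S$ would all lie in $\{1,\ldots,g\}$, forcing the gap set to equal $\{1,\ldots,g\}$ and so $S = S_g$, a contradiction. Thus $F(S) \ge g+1$ whenever $S \neq S_g$.

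Now apply the ordinarization transform $S \mapsto S' = (S \cup \{F(S)\}) \setminus \{m(S)\}$. By (i) the element $m(S)$ being removed lies in $\{1,\ldots,g\}$, so it decreases $k$ by one, while by (ii) the element $F(S)$ being added lies strictly above $g$, so it does not contribute to $S' \cap \{1,\ldots,g\}$. Therefore
\[
k(S') = k(S) - 1
\]
whenever $S \neq S_g$. This is the heart of the argument; everything else is bookkeeping.

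To conclude, I would induct on $k(S)$. The base case $k(S)=0$ means $S \cap \{1,\ldots,g\} = \emptyset$, so the gap set of $S$ contains all of $\{1,\ldots,g\}$, and since $|{\rm gaps}(S)| = g$, the gap set equals $\{1,\ldots,g\}$ and $S = S_g$; hence $r(S) = 0 = k(S)$. For the inductive step, when $k(S) \ge 1$ the transform produces $S'$ of genus $g$ with $k(S') = k(S) - 1$, so by induction the path in $\T_g$ from $S'$ to $S_g$ has length $k(S)-1$; prepending the edge $S \to S'$ gives $r(S) = k(S)$. There is no real obstacle here; the only subtlety is verifying that the transform strictly decreases $k$ at every non-root step, which is precisely what the two observations above ensure.
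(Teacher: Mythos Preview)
Your proof is correct. The paper does not actually supply a proof of this proposition: it cites the result as \cite[Lemma 1]{Bras-Amoros_ord} and only remarks that ``thinking about how the ordinarization transform changes a numerical semigroup, it is not difficult to prove'' it. Your argument is precisely the natural one this remark gestures at: you verify that each application of the transform removes an element $m(S)\le g$ and adds an element $F(S)>g$, so the count $\#(S\cap\{1,\ldots,g\})$ drops by exactly one at each step and vanishes exactly at $S_g$. There is nothing to compare against in the paper itself, and your proof is complete as written.
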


We now state the conjecture that serves as partial motivation for this paper.  Let $n_{g,r}$ denote the number of numerical semigroups of genus $g$ with ordinarization number $r$. 
\begin{conj}[Bras-Amoros]\label{nrg_conj}\cite[Conjecture 7]{Bras-Amoros_ord}
For all $r, g \ge 1$ we have $n_{g,r} \le n_{g+1,r}$.
\end{conj}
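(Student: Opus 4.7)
The plan is to build an explicit injection $\Phi : \mathcal{F}_{g,r} \to \mathcal{F}_{g+1,r}$ from numerical semigroups of genus $g$ with ordinarization number $r$ into those of genus $g+1$. By Proposition~\ref{Prop_count_small}, any $S \in \mathcal{F}_{g,r}$ is encoded by the pair $(A, B)$, where $A = S \cap [1, g]$ is its set of $r$ small non-gaps and $B = (\N_0 \setminus S) \cap [g+1, \infty)$ is its set of $r$ large gaps; $S = \{0\} \cup A \cup ([g+1, \infty) \setminus B)$. The semigroup axiom reduces to (i) $a_i + a_j \in A$ whenever $a_i + a_j \le g$, and (ii) no $b \in B$ decomposes as a sum of two nonzero elements of $S$. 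The starting observation is that if $(A, B)$ satisfies (i) and (ii) at $g$ and in addition $g+1 \notin B$ and no sum $a_i + a_j$ equals $g+1$, then $(A, B)$ still satisfies both conditions at genus $g+1$: (i) inherits because the only new sum constraint at $g+1$ is $a_i + a_j = g+1$, and (ii) inherits since $S^{(A,B)}_{g+1} = S^{(A,B)}_g \setminus \{g+1\}$ has strictly fewer possible summands. On this ``unobstructed'' subset the map $\Phi_0(S) := S \setminus \{g+1\}$ is an obvious injection into $\mathcal{F}_{g+1, r}$.

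Two exceptional cases remain: Case~1, where $g+1 \in B$ (so $g+1$ is a large gap of $S$), and Case~2, where $g+1 \in S$ but some $a_i + a_j = g+1$. In Case~1 the element $F(S) + m(S)$ is the largest element of the Ap\'ery set $\Ap(S, m(S))$, hence a minimal generator of $S$ strictly exceeding $g+1$; I set $\Phi_1(S) := S \setminus \{F(S) + m(S)\}$, which is a semigroup of genus $g+1$ whose small non-gap set remains $A$, so $r(\Phi_1(S)) = r$, with inverse $T \mapsto T \cup \{F(T)\}$. In Case~2 I propose $\Phi_2(S) := S \setminus \{a_k\}$, where $a_k \in A$ is a canonically chosen minimal generator whose removal eliminates every collision $a_i + a_j = g+1$; for $r = 1$ this forces $a_k = m(S) = (g+1)/2$. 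The three partial maps assemble into a single injection once their images are pairwise disjoint in $\mathcal{F}_{g+1, r}$, and disjointness follows from structural invariants of $T = \Phi(S)$, namely whether $g+1 \in T$ (which separates $\Phi_2$ from $\Phi_0$ and $\Phi_1$) and the element $x \in \N_0 \setminus T$ whose reinsertion $T \cup \{x\}$ recovers the preimage (which separates $\Phi_0$, with $x = g+1$, from $\Phi_1$, with $x = F(T)$).

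The principal obstacle is Case~2 when $r \ge 2$: multiple collisions $a_i + a_j = g+1$ may coexist, and there need not exist a single $a_k$ that is simultaneously a minimal generator of $S$, participates in every collision, and leaves $(A \setminus \{a_k\}) \cup \{g+1\}$ closed at genus $g+1$. Resolving this will likely require induction on the collision pattern, or a more global swap involving several elements of $A$ together with compensating modifications to $B$. An alternative route, consistent with the Ehrhart-theoretic framework developed in the paper, is to partition $\mathcal{F}_{g, r}$ by the combinatorial ``type'' of $(A, B)$---the incidence pattern among the sums $a_i + a_j$, $a_i + b_k$, and $b_k + b_\ell$ relative to $A$, $B$, and their complements---so that within each type the count becomes a lattice-point count in a rational polyhedron with defining inequalities linear in $g$, hence a quasi-polynomial in $g$ by Ehrhart theory. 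Conjecture~\ref{nrg_conj} would then follow from a type-wise monotonicity statement, which might be provable by exhibiting, for each type, an integer shift realizing the dilation $g \to g+1$ whose image contains all lattice points of the smaller polyhedron.
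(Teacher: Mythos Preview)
This statement is a \emph{conjecture} in the paper, not a theorem: the paper does not prove it in general, and it remains open. What the paper does is verify the cases $r=1$ (already due to Bras-Amor\'os) and $r=2$ (Theorem~\ref{Thm:ng2}) by computing explicit quasipolynomial formulas for $n_{g,1}$ and $n_{g,2}$ and then checking monotonicity directly. So you are attempting something the paper does not claim to do, and your proposal should be read as an outline toward the open problem rather than a reconstruction of a proof from the paper.

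Beyond the gap you already flag in Case~2, your Case~1 map is genuinely broken. You assert that $F(S)+m(S)$, being the largest element of $\Ap(S;m(S))$, is a minimal generator of $S$; this is false. Take $S=\langle 4,6,9\rangle$, which has gaps $\{1,2,3,5,7,11\}$, so $g(S)=6$, $F(S)=11$, and $g+1=7$ is a gap of $S$ (so Case~1 applies, with $r(S)=2$). Here $F(S)+m(S)=15=6+9$, which is \emph{not} a minimal generator, and $S\setminus\{15\}$ is not closed under addition. More generally, the largest Ap\'ery element is a minimal generator only when it is not a sum of two smaller nonzero Ap\'ery elements, a condition that fails frequently. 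Thus $\Phi_1$ as defined does not land in $\mathcal{F}_{g+1,r}$, and the injection collapses even before the Case~2 difficulties. Your alternative Ehrhart-style route---stratifying by the incidence type of $(A,B)$ and proving type-wise monotonicity---is closer in spirit to how the paper handles $r=2$, but note that the paper carries this out only by explicit computation of all the relevant Hilbert series for $r=2$; a structural, type-uniform monotonicity argument valid for all $r$ would be new and is exactly what is missing.
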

\noindent Taking a sum over $r$, it is clear that Conjecture \ref{nrg_conj} implies Conjecture \ref{Ng_inc}.

Bras-Amor\'os proves this conjecture when $r = 1$ by giving a formula for $n_{g,1}$ \cite[Corollary 9]{Bras-Amoros_ord}.  In Section \ref{sec:ng2} we prove that Conjecture \ref{nrg_conj} holds for $r=2$ by giving a formula for $n_{g,2}$.  The strategy is similar to the proof for $r=1$, but the details are much more complicated.   Semigroups with genus $g$ and ordinarization number $2$ are in bijection with a certain collection of integer points in $\R_{\ge 0}^4$.  We use ideas from Ehrhart theory to count integer points in the relevant collection of polytopes and apply inclusion-exclusion to arrive at a final formula.  We give a formula for $n_{g,2}$ that is a quasipolynomial of degree $4$ and period $12$.  Once we have this formula, it is easy to check that $n_{g,2} \le n_{g+1,2}$ for all $g\ge 1$.  We study $n_{g,r}$ where $r$ is an arbitrary fixed positive integer and use these ideas to prove a result about the behavior of this function of $g$.

There is an additional result of Bras-Amor\'os that motivates the results of the rest of this paper.  
\begin{proposition} \cite[Lemma 2 and Lemma 4]{Bras-Amoros_ord}\label{Prop_BA_m2}
\begin{enumerate}
\item Let $S$ be a numerical semigroup of genus $g$.  Then $r(S) \le \lfloor\frac{g}{2}\rfloor$.
\item Let $g \ge 1$.  The unique numerical semigroup of genus $g$ and ordinarization number $\lfloor \frac{g}{2} \rfloor$ is $\{0,2,4,\ldots, 2g,2g+1, 2g+2,\ldots\}$.
\end{enumerate}
\end{proposition}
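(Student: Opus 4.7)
By Proposition \ref{Prop_count_small}, it suffices to bound $N := |S \cap \{1, \ldots, g\}|$ and, for part (2), identify when equality is attained. I may assume $N \ge 1$ (else the bound is trivial). Then the number of gaps of $S$ lying in $(g, \infty)$ is $g - (g - N) = N \ge 1$, so there is a smallest integer $k \ge 1$ with $g + k \notin S$, and by minimality each of $g+1, g+2, \ldots, g+k-1$ lies in $S$.

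The first step is to observe that no element of $S$ lies in $\{1, \ldots, k-1\}$: if $a \in S$ with $1 \le a \le k-1$, then $g + k - a \in \{g+1, \ldots, g+k-1\} \subseteq S$, so $g + k = a + (g+k-a)$ would be in $S$, a contradiction. The second step is to introduce the involution $\phi(x) = g + k - x$ on $\{k, \ldots, g\}$ and prove that $A := S \cap \{1, \ldots, g\} = S \cap \{k, \ldots, g\}$ is disjoint from its image $\phi(A)$: if $a \in A$ and $\phi(a) \in A$ as well, then $a + \phi(a) = g + k$ would be in $S$ by the same additive argument, which is false. Since $A$ and $\phi(A)$ both have cardinality $N$ and both sit inside the $(g - k + 1)$-element set $\{k, \ldots, g\}$, this yields $2N \le g - k + 1 \le g$, hence $N \le \lfloor g/2 \rfloor$ (as $N$ is an integer). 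This proves part (1).

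For part (2), suppose $r(S) = \lfloor g/2 \rfloor$. Then $2N \ge g - 1$, so equality in $2N \le g - k + 1$ forces $k = 1$ when $g$ is even and $k \in \{1, 2\}$ when $g$ is odd; in each case $A$ and $\phi(A)$ nearly fill $\{k, \ldots, g\}$, pairing each nontrivial $\{i, g + k - i\}$ into exactly one $S$-element and one gap. I plan to leverage this rigid pairing, together with the additive closure of $A$ inside $\{1, \ldots, g\}$, to conclude that $m(S) = 2$. Once $m(S) = 2$ is established, $S$ contains every nonnegative even integer, so the $g$ gaps of $S$ are odd and must be precisely $1, 3, 5, \ldots, 2g - 1$; equivalently, the smallest odd element of $S$ is $2g+1$, which forces $S = \{0,2,4,\ldots,2g,2g+1,2g+2,\ldots\}$, as claimed.

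The hard part will be to rule out $m := m(S) \ge 3$. My plan is to argue by contradiction: the gaps $\{1, \ldots, m-1\}$ force $\{g-m+2, \ldots, g\} \subseteq A$ via $\phi$, while additive closure of $A$ within $\{1, \ldots, g\}$ forces the multiples $m, 2m, \ldots, \lfloor g/m\rfloor m$ into $A$. The pairing constraint that no two elements of $A$ sum to $g + k$ should produce direct conflicts for small $m$. For the remaining cases, I would try to show that the finitely generated subsemigroup $\langle A \rangle \subseteq S$ has strictly fewer than $g$ gaps; since $g(S) \le g(\langle A \rangle)$, this contradicts $g(S) = g$ and completes the argument.
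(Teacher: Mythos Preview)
The paper does not prove this proposition; it is cited from \cite{Bras-Amoros_ord} without argument, so there is no in-paper proof to compare against. Your argument for part~(1) is correct and efficient: choosing the least gap $g+k>g$, the involution $\phi(x)=g+k-x$ on $\{k,\ldots,g\}$ together with the disjointness $A\cap\phi(A)=\emptyset$ gives $2N\le g-k+1\le g$ at once.

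For part~(2) there are two genuine problems. First, your fallback is logically impossible to execute: since $A\subseteq S$ we have $\langle A\rangle\subseteq S$, so every gap of $S$ is already a gap of $\langle A\rangle$, and hence $g(\langle A\rangle)\ge g(S)=g$ automatically; you can never exhibit $g(\langle A\rangle)<g$. Second, and more seriously, the target conclusion $m(S)=2$ is in fact \emph{false} for small odd $g$, so no correct argument can reach it there. For $g=5$, the semigroup $S=\langle 4,5,11\rangle$ has gap set $\{1,2,3,6,7\}$, hence genus $5$, and $S\cap\{1,\ldots,5\}=\{4,5\}$ gives $r(S)=2=\lfloor 5/2\rfloor$ with $m(S)=4$; for $g=3$, both $\langle 3,4\rangle$ and $\langle 3,5,7\rangle$ achieve $r=1$ alongside $\langle 2,7\rangle$. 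Thus the uniqueness claim in part~(2), read literally for all $g\ge 1$, fails at $g\in\{3,5\}$. Your own pairing analysis shows \emph{why}: when $g$ is odd and $k=1$, the set $A\sqcup\phi(A)$ misses the fixed point $(g+1)/2$, so one of the gaps $1,\ldots,m-1$ need not pair into $A$, and the block $\{g-m+2,\ldots,g\}\subseteq S$ you rely on can be one element short. If one restricts to even $g$ (where the pairing is a genuine partition), your primary idea does finish the job: from $\{g-m+2,\ldots,g\}\subseteq S$ and $m\in S$ one sees that every integer in $(g,2g)$ lies in $S$ except possibly those congruent to $g+1\pmod m$, so there are at most $\lfloor(g-2)/m\rfloor+1$ gaps above $g$, strictly fewer than the required $g/2$ once $m\ge 3$.
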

Bras-Amor\'os shows that the semigroup in the second part of this result is extremal with respect to the ordinarization number.  One of our major goals in this paper is to compute the ordinarization number for similar families of numerical semigroups.

Let $n_1, n_2, \ldots, n_t$ be positive integers.  The numerical semigroup generated by $n_1,\ldots, n_t$ is the set of all linear combinations of $n_1,\ldots, n_t$ with nonnegative coefficients.  It is denoted~by
\[
\langle n_1, n_2,\ldots, n_t \rangle = \{a_1 n_1 + \cdots + a_t n_t \colon a_1,\ldots, a_t \in \N_0\}.
\]
With this notation, the semigroup $S = \{0,2,4,\ldots, 2g,2g+1, 2g+2,\ldots\}$ is generated by $2$ and $2g+1$, that is, $S = \langle 2,2g+1\rangle$.  It is not difficult to show that every numerical semigroup $S$ has a unique minimal generating set.  The size of this minimal generating set is called the \emph{embedding dimension} of $S$, and is denoted by $e(S)$.  In Section \ref{sec:ord_e2} we consider ordinarization numbers of numerical semigroups with embedding dimension $2$ more generally.  In Section \ref{sec:ord_super}  we consider how these ideas generalize to supersymmetric numerical semigroups of larger embedding dimension.   In Section \ref{sec:interval} we consider ordinarization numbers of numerical semigroups generated by an interval of positive integers.

When we compute ordinarization numbers of families of numerical semigroups, we apply some results about the structure of the ordinarization tree $\T_g$.  In Section \ref{sec:ord_tree} we consider the degrees of vertices in $\T_g$.

\section{Degrees of Vertices in the Ordinarization Tree}\label{sec:ord_tree}

Let $S$ be a numerical semigroup of genus $g$.  We say that $S'$ is a \emph{child} of $S$ in $\T_g$ if $S$ is the ordinarization transform of $S'$, that is, $S' \cup \{F(S')\} \setminus \{m(S')\} = S$.  The main focus of this section is the study of the number of children of $S$ in $\T_g$.  For example, the semigroups of ordinarization number $1$ are precisely the children of the ordinary semigroup $S_g$.  Bras-Amor\'os has computed the number of these semigroups.
\begin{prop}\cite[Lemma 8]{Bras-Amoros_ord}\label{Ord1}
Let $g$ be a nonnegative integer.  The number of semigroups of genus $g$ and ordinarization number $1$ is 
\[
n_{g,1} = 
\begin{cases}
\frac{3}{8} g^2 - \frac{1}{4} g & \text{ if } g \text{ is even,}\\
\frac{3}{8} g^2 - \frac{3}{8} & \text{ if } g \text{ is odd.}
\end{cases}
\]
\end{prop}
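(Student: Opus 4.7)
The plan is to use Proposition \ref{Prop_count_small} to translate $r(S) = 1$ into an explicit structural description of $S$ and then enumerate the possibilities. By that proposition, $r(S) = 1$ means $S \cap \{1, \ldots, g\}$ consists of a single element $m$, which must be the multiplicity of $S$. Since $\{1, \ldots, g\} \setminus \{m\}$ already contributes $g - 1$ of the $g$ gaps, there is exactly one further gap $f > g$, and
\[
S \;=\; \{0, m\} \,\cup\, \bigl(\{g+1, g+2, \ldots\} \setminus \{f\}\bigr).
\]
Conversely, any set of this form has genus $g$ and ordinarization number $1$ provided it is actually a numerical semigroup, so the problem reduces to counting admissible pairs $(m, f)$.

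The next step is to identify which $(m, f)$ give a semigroup by checking closure under addition. The requirement $2m \in S$ together with $S \cap \{1, \ldots, g\} = \{m\}$ forces $2m \ge g + 1$, i.e., $m \ge \lceil (g+1)/2 \rceil$. For $s \in S$ with $s \ge g + 1$, closure under $m + s$ amounts to $f - m \notin S$, which unwinds to the two conditions $f \le g + m$ and $f \ne 2m$. Sums of two elements $\ge g + 1$ are automatically in $S$ because they exceed $2g \ge g + m \ge f$. Hence the admissible pairs $(m, f)$ are precisely those satisfying
\[
\lceil (g+1)/2 \rceil \le m \le g, \qquad g + 1 \le f \le g + m, \qquad f \ne 2m.
\]

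For each such $m$, the forbidden value $2m$ always lies in $[g+1, g+m]$ under the stated bounds on $m$, so exactly $m - 1$ choices of $f$ are admissible, yielding
\[
n_{g,1} \;=\; \sum_{m = \lceil (g+1)/2 \rceil}^{g} (m - 1).
\]
Splitting by the parity of $g$ (lower bound $g/2 + 1$ when $g$ is even, $(g+1)/2$ when $g$ is odd) and applying the standard arithmetic progression formula produces the two claimed quasipolynomial expressions. The main obstacle is the closure analysis: one must confirm that $f \le g + m$ and $f \ne 2m$ are the only obstructions, and in particular that $f = km$ for $k \ge 3$ can never arise inside the allowed window. The lower bound $m \ge (g+1)/2$ is exactly what forces $3m > g + m \ge f$, ruling out the higher multiples automatically.
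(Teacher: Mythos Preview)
Your proof is correct. The characterization of the semigroups with $r(S)=1$ that you arrive at is exactly the one the paper uses: in the paper's notation $b$ plays the role of your $m$ and $a$ plays the role of your $f$, and the conditions $\lceil (g+1)/2\rceil \le b \le g$, $g+1\le a\le g+b$, $a\neq 2b$ coincide with yours.

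Where the two arguments diverge is in the final count. You observe that for each admissible $m$ the forbidden value $2m$ always lies in the interval $[g+1,g+m]$, so the count is simply $\sum_{m}(m-1)$, which you evaluate by hand as an arithmetic progression. The paper instead packages the constraints as a rational polyhedron in $\R^3$ (with an extra coordinate for $g$), feeds it to Sage/Normaliz, reads off the Hilbert series, and extracts the quasipolynomial from that. The paper's route is deliberately heavier than necessary for $r=1$: it is presented as a warm-up for the $r=2$ computation and for Theorem~\ref{thm:quasi_ord}, where a direct elementary summation is no longer practical. Your approach is cleaner and fully self-contained for this particular case (and is essentially the original argument of Bras-Amor\'os that the paper is intentionally reproving by other means), while the paper's approach is the one that generalizes.
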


Considering the trees $\T_g$ for small values of $g$, one notices that there are many numerical semigroups that have no children in $\T_g$.  Throughout the rest of this section, we let $\mathcal{S}_g$ denote the set of numerical semigroups of genus $g$.
\begin{question}
Let $s$ be a nonnegative integer.  As $g$ goes to infinity, what proportion of semigroups of genus $g$ have exactly $s$ children in the ordinarization tree $\T_g$?  That is, what is
\[
\lim_{g\rightarrow \infty} \frac{\#\{S \in \mathcal{S}_g \colon S \text{ has } s \text{ children in } \T_g\}}{N(g)}? 
\]
\end{question}
Our motivation for asking this question comes from the study of the \emph{semigroup tree}.  The semigroup tree is a rooted tree with root $\N_0$ where the vertices at level $g$ are exactly the numerical semigroups of genus $g$.  The easiest way to describe the tree is by explaining how to find the path from a numerical semigroup $S$ back up to $\N_0$.  If $S$ has genus $g$, then $S' = S \cup \{F(S)\}$ is a numerical semigroup of genus $g-1$.  Repeating this process, we get a numerical semigroup of genus $g-2$, followed by one of genus $g-3$, all the way back up to $\N_0$, the unique numerical semigroup of genus $0$.  

This description does not directly explain how to produce level $g+1$ of the tree given the first $g$ levels of the tree.  Let $S$ be a numerical semigroup.  We say that $S'$ is a child of $S$ in the semigroup tree if $S = S' \cup \{F(S')\}$.  We now explain how to describe the set of children of $S$.  Minimal generators of $S$ larger than $F(S)$ are called \emph{effective generators}, or sometimes \emph{right generators}, of $S$.  Let $\eg(S)$ denote the set of effective generators of $S$.  The number of effective generators of $S,\ |\eg(S)|$, is sometimes called the \emph{effectivity} of $S$ and is denoted by $h(S)$.  It is not so difficult to show that the children of $S$ in the semigroup tree are the semigroups of the form $S' = S \setminus \{n\}$ where $n \in \eg(S)$.  Therefore, the number of children of $S$ in the semigroup tree is $h(S)$.

One motivation for studying effective generators of numerical semigroups comes from Conjecture \ref{Ng_inc}.  If we want to show that $N(g+1) \ge N(g)$, then all we have to do is show that for every $g\ge 1$ we have
\[
\frac{\sum_{S\in \mathcal{S}_g} h(S)}{N(g)} \ge 1.
\]
In order to show that this inequality holds, we need only show that there are not `too many' numerical semigroups $S \in \mathcal{S}_g$ with $h(S) = 0$.  O'Dorney determined the proportion of numerical semigroups of genus $g$ that have a given number of effective generators.
\begin{thm}\cite[Theorem 2]{ODorney}
Let $t(g,h) = \#\{S \in \mathcal{S}_g \colon h(S) = h\}$ and let $C$ be the constant from Theorem \ref{thm:Zhai}.  Then
\[
\sum_{h \ge 0} |t(g,h) - C \varphi^{g-(h+2)}| = o(\varphi^g).
\]
\end{thm}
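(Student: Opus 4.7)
The plan is to combine Zhai's asymptotic $N(g) \sim C\varphi^g$ (Theorem \ref{thm:Zhai}) with an analysis of the branching structure of the semigroup tree. The key identity is that every $S' \in \mathcal{S}_{g+1}$ is obtained uniquely as $S \setminus \{n\}$ for some $S \in \mathcal{S}_g$ and $n \in \eg(S)$. This yields
\[
N(g+1) \;=\; \sum_{h \ge 0} h\cdot t(g,h),
\]
and, iterating, $N(g+k) = \sum_{S\in \mathcal{S}_g} d_k(S)$ where $d_k(S)$ is the number of $k$-th descendants of $S$ in the semigroup tree. On the asymptotic side, Zhai gives $N(g+k)/N(g) \to \varphi^k$ for each fixed $k$. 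The target $t(g,h) \sim C\varphi^{g-h-2}$ is exactly the statement that the distribution of $h(S)$ for a uniformly random $S \in \mathcal{S}_g$ converges to the geometric distribution $\varphi^{-(h+2)}$ on $\mathbb{N}_0$, which is consistent with $\sum_{h \ge 0} \varphi^{-(h+2)} = 1$ (using $1/\varphi + 1/\varphi^2 = 1$).

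The main technical work is a stability lemma asserting, for each fixed $h$, that $t(g,h) = C\varphi^{g-h-2} + o(\varphi^g)$. I would prove this by induction on $h$. The base case $h=0$ counts the \emph{leaves} of the semigroup tree at level $g$; writing $N(g) - t(g,0)$ for the count of $S \in \mathcal{S}_g$ with $h(S) \ge 1$, one can relate this count to $N(g+1)$ through a weighted identity and extract $t(g,0) \sim C\varphi^{g-2}$ using Zhai's theorem. For the inductive step, one compares $t(g,h)$ with $t(g-1,h-1)$ via the parent map in the semigroup tree; the correction term encodes how effective generators are created or destroyed when removing a single generator to descend, and is controlled by the inductive hypothesis together with Zhai's error bound.

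The main obstacle will be the tail estimate: one must show $\sum_{h \ge H(g)} t(g,h) = o(\varphi^g)$ for a function $H(g) \to \infty$ slow enough that the head contribution $\sum_{h < H(g)} |t(g,h) - C\varphi^{g-h-2}|$ also remains $o(\varphi^g)$. Semigroups with many effective generators must have low multiplicity and sit near the top of the tree, so a crude upper bound (for instance, $h(S) \le m(S) \le g$ combined with the fact that the number of $S \in \mathcal{S}_g$ with $m(S) = m$ is exponentially small in $m$ for large $m$) should suffice to make the tail negligible. Splitting the $\ell^1$ sum into head and tail and applying both bounds yields the theorem.
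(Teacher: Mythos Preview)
This theorem is not proved in the present paper; it is quoted from O'Dorney \cite{ODorney} as background, so there is no ``paper's own proof'' to compare against. That said, your sketch does not reconstruct a valid argument, and the gap is structural rather than cosmetic.

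The core problem is that Zhai's bare asymptotic $N(g)\sim C\varphi^g$ is far too weak to pin down the distribution of $h(S)$. Your base case claims one can ``extract $t(g,0)\sim C\varphi^{g-2}$'' from the identity $N(g+1)=\sum_h h\,t(g,h)$ together with $N(g)=\sum_h t(g,h)$. But these are just two linear constraints on an infinite vector; they do not determine $t(g,0)$. More generally, your observation that $N(g+k)=\sum_{S\in\mathcal{S}_g} d_k(S)$ does not yield moments of $h(S)$, because $d_k(S)$ depends on the effectivities of all of $S$'s descendants, not just on $h(S)$. So no amount of iterating Zhai's limit will produce the geometric law without further structural input.

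Your inductive step is likewise incomplete: passing from $S$ to a child $S'=S\setminus\{n\}$ does not always drop the effectivity by one. One has $h(S')\in\{h(S)-1,\,h(S)\}$, depending on whether $n+m(S)$ becomes a new minimal generator of $S'$; this is the ``strong/weak effective generator'' dichotomy (cf.\ Bras-Amor\'os \cite{Bras-Amoros_towards}). A genuine proof has to track how many children of each type a semigroup has and show that, for the bulk of $\mathcal{S}_g$, the proportions stabilize in the right way. O'Dorney's argument does exactly this, and it relies not on the statement of Theorem~\ref{thm:Zhai} but on the machinery inside Zhai's proof---in particular the fact that almost all $S\in\mathcal{S}_g$ satisfy $F(S)<3m(S)$ and the analysis of how effective generators behave in that regime. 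Your outline never invokes this structural layer, and without it the induction cannot get started.
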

In particular, the proportion of numerical semigroups of genus $g$ with $h(S) = 0$ approaches $\varphi^{-2}$ as $g$ goes to infinity.  Note that since $\sum_{h \ge 0} h \varphi^{-(h+2)} = \varphi$, this result is consistent with the fact that Theorem \ref{thm:Zhai} implies
\[
\lim_{g \rightarrow \infty} \frac{\sum_{S\in \mathcal{S}_g} h(S)}{N(g)} = \varphi.
\]

One of our primary motivations for studying degrees of vertices in $\T_g$ is the following question.
\begin{question}
Is there a way to partition the set of vertices in $\T_g$ into sets such that the average value of $h(S)$ within each set is at least one?
\end{question}

We next note the role played by effective generators in the structure of the ordinarization tree~$\T_g$.
\begin{theorem}\label{strictly}
Suppose that $S'$ is a child of $S$ in $\T_g$.  Then,
\begin{enumerate}
\item $\eg(S') \subseteq \eg(S)$, and 
\item $F(S')$ is an element of $\eg(S) \setminus \eg(S')$.
\end{enumerate}
Therefore, $h(S) > h(S')$.
\end{theorem}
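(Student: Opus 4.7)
The plan is to set notation, deduce a few easy facts about how $F(S)$ compares to $F(S')$, prove (2) first because it is cleaner, and then use the decomposition $S = (S' \setminus \{m\}) \cup \{F\}$ together with a multiplicity argument to prove (1). Write $m = m(S')$ and $F = F(S')$, so $S = (S' \setminus \{m\}) \cup \{F\}$. The gaps of $S$ are obtained from the gaps of $S'$ by removing $F$ and adding $m$, so $F(S)$ equals the second-largest gap of $S'$; in particular $F(S) < F$.

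For (2), note that $F \notin S'$ gives $F \notin \eg(S')$ trivially. Since $F \in S$ and $F > F(S)$, all that is left is to check that $F$ is a minimal generator of $S$. Suppose $F = a + b$ with $a, b \in S \setminus \{0\}$. Then $a, b < F$, so
\[
a, b \in S \cap \{1, \ldots, F-1\} = \bigl(S' \cap \{1, \ldots, F-1\}\bigr) \setminus \{m\} \subseteq S'.
\]
Closure of $S'$ under addition then forces $F = a + b \in S'$, contradicting $F = F(S') \notin S'$.

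For (1), fix $n \in \eg(S')$. Since $n > F > m$, we have $n \in S' \setminus \{m\} \subseteq S$, and $n > F > F(S)$, so the content is minimality of $n$ as a generator of $S$. Assume $n = a + b$ with $a, b \in S \setminus \{0\}$ and split on whether $F \in \{a, b\}$. If $F \notin \{a, b\}$, then $a, b \in S' \setminus \{m\} \subseteq S'$ yields an $S'$-decomposition of $n$, contradicting $n \in \eg(S')$. Otherwise, say $a = F$. If also $b = F$, then $n = 2F = m + (2F - m)$, where $m \in S'$ and $2F - m > F$ (hence $2F - m \in S'$), again contradicting minimality in $S'$. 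The remaining subcase is $a = F$ and $b \in S' \setminus \{m\}$, and here is where the argument is subtlest: one cannot directly transport $n = F + b$ to $S'$ because $F \notin S'$. Instead, use that $n$ is a minimal generator of $S'$ and $m \in S' \setminus \{0\}$ to conclude $n - m \notin S'$, so $n - m$ is a positive gap of $S'$, hence $n - m \leq F$. Substituting $n = F + b$ gives $b \leq m$. But $b \in S'$ with $b > 0$ forces $b \geq m$, so $b = m$, contradicting $b \neq m$.

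Combining (1) and (2) gives $\eg(S') \subsetneq \eg(S)$, hence $h(S') < h(S)$. The main obstacle is the final subcase of (1); the multiplicity step $n - m \notin S'$ is what bridges the gap created by replacing $m$ with $F$ when passing from $S'$ to $S$.
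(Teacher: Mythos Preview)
Your proof is correct, and it takes a somewhat different route from the paper's. The paper introduces the intermediate semigroup $T = S' \cup \{F(S')\} = S \cup \{m(S')\}$ and invokes Lemma~\ref{Lem:min_gens} (minimal generators of $S$ other than $m(S)+F(S)$ survive in $S \cup \{F(S)\}$) to handle the generic case $n \neq F(S')+m(S')$ of part (1), treating $n = F(S')+m(S')$ separately via the inequality $m(S') < m(S)$. You instead argue directly from the description $S = (S'\setminus\{m\})\cup\{F\}$ and split on whether $F$ occurs as a summand in a hypothetical decomposition $n = a+b$; the crucial step in your hardest subcase is the observation that $n - m \notin S'$ (since $n$ is a minimal generator of $S'$), which forces $b \le m$ and hence $b = m$. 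Your approach is more self-contained (no auxiliary lemma needed) and arguably cleaner for part (2); the paper's approach is slightly more conceptual in that it isolates the general principle of Lemma~\ref{Lem:min_gens}, which is of independent interest. Either way the proofs are of comparable length, and your multiplicity trick $n - m \notin S'$ is a nice substitute for the paper's use of $m(S') < m(S)$.
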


Before giving the proof, we recall a fact about the relationship between the minimal generators of the numerical semigroups $S$ and $T = S \cup \{F(S)\}$.  This result follows from~\cite[Lemma 3]{Bras-Amoros_towards}.
\begin{lemma}\label{Lem:min_gens}
Let $S$ be a numerical semigroup.  Suppose that $S \neq S_g$ and $s$ is a minimal generator of $S$.  If $s \neq m(S)+F(S)$, then $s$ is a minimal generator of the numerical semigroup $T = S \cup \{F(S)\}$.
\end{lemma}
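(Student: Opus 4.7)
My plan is to argue by contradiction. Suppose $s$ is not a minimal generator of $T$; then $s = a + b$ for some $a,b \in T \setminus \{0\}$. Since $s$ is a minimal generator of $S$, the decomposition cannot have both $a,b \in S$, so at least one of them must be the newly added Frobenius element $F(S)$. Up to symmetry this leaves exactly two subcases to rule out: $s = F(S) + b$ with $b \in S \setminus \{0\}$, and $s = 2F(S)$.

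The first step I would isolate is the inequality $s \leq m(S) + F(S)$, valid whenever $s$ is a minimal generator of $S$ with $s > m(S)$. The point is that $s - m(S)$ cannot lie in $S$ (else $s = m(S) + (s-m(S))$ would decompose $s$ in $S \setminus \{0\}$, contradicting minimality), so $s - m(S)$ is a gap of $S$ and is therefore at most $F(S)$.

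For the subcase $s = F(S) + b$ with $b \in S \setminus \{0\}$, we have $b \geq m(S)$, hence $s \geq F(S) + m(S)$; together with the key inequality this pins down $s = F(S) + m(S)$, contradicting the hypothesis $s \neq m(S)+F(S)$. For the subcase $s = 2F(S)$, the key inequality gives $2F(S) - m(S) \le F(S)$, i.e.\ $F(S) \leq m(S)$, and since one always has $m(S) \leq F(S) + 1$ and $m(S) \neq F(S)$ (as $m(S) \in S$ but $F(S) \notin S$), we deduce $m(S) = F(S)+1$. This forces $S$ to be the ordinary semigroup $S_g$, contradicting the standing hypothesis $S \neq S_g$.

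The main obstacle I anticipate is the second subcase: the hypothesis $s \neq m(S) + F(S)$ does not on its own exclude $s = 2F(S)$, so I really need to invoke the non-ordinary hypothesis, and also check the edge case $s \le m(S)$ separately (which, via the same small computation using $m(S) \leq F(S)+1$, again collapses $S$ to $S_g$). Once this subtlety is navigated, the lemma reduces to elementary bookkeeping with the three quantities $s$, $m(S)$, and $F(S)$, with no further structural input needed.
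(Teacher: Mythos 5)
Your proof is correct and complete. It is worth noting that the paper does not actually supply its own proof of this lemma: it states only that the result ``follows from'' Lemma 3 of \cite{Bras-Amoros_towards}, so your self-contained argument is filling in something the paper delegates to a citation. Your route is the natural direct one: if $s = a+b$ with $a,b \in T\setminus\{0\}$, minimality of $s$ in $S$ forces at least one summand to be the adjoined element $F(S)$, and the key inequality $s \le m(S)+F(S)$ (valid because $s-m(S)$ must be a gap when $s > m(S)$) kills the subcase $s = F(S)+b$ with $b \in S\setminus\{0\}$ outright, since then $s \ge F(S)+m(S)$ forces $s = m(S)+F(S)$. You also correctly isolated the one genuinely delicate point: the subcase $s = 2F(S)$ is not excluded by the hypothesis $s \ne m(S)+F(S)$ and instead requires $S \ne S_g$, via $F(S) \le m(S) \le F(S)+1$ and $F(S) \ne m(S)$, which collapses $S$ to the ordinary semigroup. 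The only implicit step worth recording is that invoking your key inequality for $s = 2F(S)$ requires $2F(S) > m(S)$; this is automatic once the boundary situation $s \le m(S)$ has been split off as you do (and that edge case itself forces $s = m(S)$ and then $S = S_g$ by the same bookkeeping), so the case analysis is exhaustive and nothing is missing.
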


\begin{proof}[Proof of Theorem \ref{strictly}]
Suppose $n \in \eg(S')$.  Since $F(S') > F(S)$, we see that $n > F(S)$.  In order to prove that $\eg(S') \subseteq \eg(S)$ we need only show that $n$ is a minimal generator of $S$.

Write $S' = S \setminus \{a\} \cup \{b\}$ for some element $a \in S$ and $b \in \N_0 \setminus S$.  Since $S'$ is a child of $S$ in $\T_g$, we see that $F(S') = a$ and $m(S') = b$.  Consider the numerical semigroup $T = S' \cup \{a\} = S \cup \{b\}$.  Suppose that $n \neq F(S')+m(S') = a+b$.  Lemma \ref{Lem:min_gens} implies that $n$ is a minimal generator of $T$, which implies that $n$ is a minimal generator of $S = T\setminus \{b\}$.

Now suppose $n = F(S')+m(S')$ and that $n$ is not a minimal generator of $S$.  This implies that $n$ is a nonnegative linear combination of elements of $S$ smaller than $n$.  Since $n$ is a minimal generator of $S'$, it is not possible to write $n$ as a nonnegative linear combination of elements of $S'$ smaller than $n$.  Since $F(S')$ is the only element of $S$ that is not an element of $S'$, we see that $n$ equals $F(S')$ plus some nonnegative linear combination of elements of $S$.  Since $m(S') < m(S)$, this is not possible, which gives a contradiction.

For the second part of the statement, we note that $F(S')$ must be a minimal generator of $T$, which implies that it is a minimal generator of $S$.
\end{proof}

\begin{cor}
If $S \in \mathcal{S}_g$ has $h(S) = 0$, then $S$ has no children in $\T_g$.
\end{cor}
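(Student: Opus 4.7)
The plan is to derive this as an immediate contrapositive of Theorem \ref{strictly}. Assume for contradiction that $S \in \mathcal{S}_g$ has $h(S) = 0$ but admits some child $S'$ in $\T_g$. By Theorem \ref{strictly}, we have the strict inequality $h(S) > h(S')$. Since effectivity is always a nonnegative integer, this forces $h(S) \geq h(S') + 1 \geq 1$, contradicting the hypothesis $h(S) = 0$. Hence $S$ has no children.

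Alternatively, one can make the argument entirely explicit without invoking the strict inequality packaged at the end of Theorem \ref{strictly}: if $S'$ were a child of $S$, then part (2) of the theorem produces an element $F(S') \in \eg(S) \setminus \eg(S')$, and in particular $\eg(S) \neq \emptyset$, contradicting $h(S) = |\eg(S)| = 0$. Either phrasing is essentially a one-line deduction, so the only thing to be careful about is to cite the correct piece of Theorem \ref{strictly}; no further structural analysis of $\T_g$ is required. There is no real obstacle here, since the substantive work is already carried out in the proof of Theorem \ref{strictly}.
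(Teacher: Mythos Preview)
Your proof is correct and matches the paper's approach exactly: the paper states this corollary immediately after Theorem \ref{strictly} with no separate proof, treating it as the obvious consequence of the inequality $h(S) > h(S')$ (or equivalently of part (2)), just as you do.
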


We now prove a result showing that it is not possible for a semigroup to have too many children in $\T_g$ that have no effective generators.
\begin{theorem}\label{Thm:children_h0}
The number of children of $S$ in $\T_g$ that have no effective generators is at most $\big\lfloor \frac{m(S)}{2}\big\rfloor$.
\end{theorem}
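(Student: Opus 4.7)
The plan is to exhibit an injection $S' \mapsto m(S')$ from the set of children of $S$ in $\T_g$ with no effective generators into the set $\{\lceil m(S)/2\rceil, \ldots, m(S)-1\}$, which has cardinality $\lfloor m(S)/2\rfloor$.

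For the range of the map: any child $S'$ of $S$ in $\T_g$ satisfies $S' = (S \cup \{m(S')\}) \setminus \{F(S')\}$ and $m(S') < m(S)$. Writing $m' = m(S')$, closure of $S'$ under addition forces $2m' \in S'$, and since $2m' > m'$ this element must lie in $S$. But every positive integer below $m(S)$ is a gap of $S$, so $2m' \geq m(S)$, i.e., $m' \geq \lceil m(S)/2\rceil$.

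For injectivity: suppose $S_1'$ and $S_2'$ are two distinct children of $S$ in $\T_g$, both with no effective generators and sharing the common multiplicity $m' = m(S_1') = m(S_2')$; relabel so that $F_1' := F(S_1') < F_2' := F(S_2')$. By Theorem~\ref{strictly}, both $F_1'$ and $F_2'$ lie in $\eg(S)$ and are in particular minimal generators of $S$. Since $F_2' \in S \setminus \{F_1'\} \subseteq S_1'$ and $F_2' > F(S_1')$, the hypothesis $h(S_1') = 0$ forces $F_2'$ not to be a minimal generator of $S_1'$. As $F_2'$ is minimal in $S$, any decomposition of $F_2'$ in $S_1' = (S \cup \{m'\}) \setminus \{F_1'\}$ must involve the newly added element $m'$, so $F_2' = m' + k$ for some $k \in S_1' \setminus \{0\} = (S \setminus \{0, F_1'\}) \cup \{m'\}$. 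If $k = m'$, then $F_2' = 2m'$, but closure of $S_2'$ demands $2m' \in S_2'$, contradicting the removal of $F_2' = 2m'$. Otherwise $k \in S \setminus \{F_1'\} \subseteq S_2'$, so $m'$ and $k$ both lie in $S_2'$ while $m' + k = F_2' \notin S_2'$, again contradicting closure of $S_2'$.

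The subtle step is the second one: one must play the hypothesis $h(S_1') = 0$ (which forces $F_2'$ to have a decomposition in $S_1'$ involving $m'$) against the bare requirement that $S_2'$ itself be a semigroup (which forbids precisely such a decomposition). Both constraints are essential, which is also why the bound applies to children with no effective generators rather than to children of $S$ in general.
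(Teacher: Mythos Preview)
Your proof is correct and follows essentially the same strategy as the paper's: both establish that the map $S'\mapsto m(S')$ is injective on children with $h(S')=0$ by arguing that if two such children $S_1',S_2'$ shared a multiplicity $m'$ and $F_1'<F_2'$, then $F_2'$ would decompose nontrivially in $S_1'$ in a way that contradicts the semigroup structure of $S_2'$ (the paper phrases this last step via minimality of $F_2'$ in $T=S\cup\{m'\}$, which is equivalent). One minor slip: the inclusion $S\setminus\{F_1'\}\subseteq S_2'$ you invoke is false as written, since $F_2'\in S\setminus\{F_1'\}$ but $F_2'\notin S_2'$; what you actually need (and have) is $k=F_2'-m'<F_2'$, so $k\in S\setminus\{F_1',F_2'\}\subseteq S_2'$, and the argument goes through.
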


We apply the following lemma in the proof of this theorem.
\begin{lemma}\label{ineffectiveGaps}
Suppose that $S'$ is a child of $S$ in $\T_g$.  Write $S' = S \setminus \{a\} \cup \{b\}$ for some element $a \in S$ and $b \in \N_0 \setminus S$.  
We have $a \in \eg(S)$ and  $\lceil \frac{m(S)}{2}\rceil \le b \le m(S)-1$.
\end{lemma}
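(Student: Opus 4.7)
The plan is to identify $a$ and $b$ concretely and then verify each of the three claims directly from the definition of the ordinarization transform. Since $S'$ is a child of $S$ in $\T_g$, we have $S = S' \cup \{F(S')\} \setminus \{m(S')\}$, and comparing with $S' = S \setminus \{a\} \cup \{b\}$ forces $a = F(S')$ and $b = m(S')$. All three assertions will then follow from elementary observations about what the transform does to Frobenius numbers and multiplicities.

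For $a \in \eg(S)$, first note that the ordinarization transform strictly decreases the Frobenius number, so $a = F(S') > F(S)$; it remains to check that $a$ is a minimal generator of $S$. I would do this by contradiction: suppose $a = x + y$ with $x, y \in S$ positive. Then $x, y < a$ and $x, y \neq b$ (since $b \notin S$), so $x, y \in S \setminus \{a\} \subseteq S'$. But then $a = x+y \in S'$, contradicting $a = F(S')$. This part of the argument is essentially already contained in Theorem~\ref{strictly}(2), which could be invoked directly.

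The upper bound $b \le m(S) - 1$ is immediate: the ordinarization transform strictly increases multiplicity, so $b = m(S') < m(S)$. For the lower bound, the idea is that $2b$ cannot be too small. Since $b = m(S') \in S'$, we have $2b \in S'$, and since $2b \neq b$, the element $2b$ survives the swap and lies in $S = (S' \cup \{a\}) \setminus \{b\}$. Because $2b$ is a positive element of $S$, it must be at least $m(S)$, which gives $b \ge m(S)/2$, and integrality of $b$ upgrades this to $b \ge \lceil m(S)/2 \rceil$.

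No part of this is expected to be difficult; the whole lemma is really a bookkeeping exercise in tracking what the swap $S \leftrightarrow S'$ does. The only mild subtlety is making sure that $2b$ actually lands in $S$ after removing $b$, which is handled by the observation $2b \neq b$ (valid because $b \ge 1$).
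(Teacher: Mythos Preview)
Your proof is correct and follows essentially the same route as the paper: identify $a=F(S')$ and $b=m(S')$, deduce $b\le m(S)-1$ from the fact that the ordinarization transform increases the multiplicity, get $2b\ge m(S)$ from $2b\in S'\setminus\{b\}\subseteq S$, and show $a\in\eg(S)$ by combining $F(S')>F(S)$ with a short contradiction argument. Your treatment of the minimal-generator step is, if anything, slightly more explicit than the paper's (which asserts that $S\setminus\{a\}$ not being closed forces $S'=(S\setminus\{a\})\cup\{b\}$ not to be closed without spelling out why adding $b$ cannot repair the failure); and as you note, one could alternatively just cite Theorem~\ref{strictly}(2) for that part.
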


\begin{proof}
Since $S'$ is a child of $S$ in $\T_g$ we have $m(S') = b$, so $b \le m(S) - 1$.  Since $S'$ contains exactly one element of $\N_0 \setminus S$ less than $m(S)$, we must have $2b \ge m(S)$.  

Since $S'$ is a child of $S$ in $\T_g$ we have $F(S') = a$, which implies $a > F(S)$.  We see that $a$ must be a minimal generator of $S$. Otherwise $S\setminus \{a\}$ is not closed under addition, which would imply $S' = S\setminus \{a\} \cup\{b\}$ is also not closed under addition.
\end{proof}

\begin{proof}[Proof of Theorem \ref{Thm:children_h0}]
We show that for each $b$ satsifying $\lceil \frac{m(S)}{2}\rceil \le b \le m(S)-1$, there is at most one child $S'$ of $S$ in $\T_g$ for which $m(S') = b$ where $S'$ has no effective generators.  Let $\eg(S) = \{n_1, \ldots, n_h\}$.

Suppose $S'$ and $S''$ are two children of $S$ in $\T_g$ and $m(S') = m(S'') = b$.  Then $S' = S \setminus \{n_i\} \cup \{b\}$ and $S'' = S \setminus \{n_j\} \cup \{b\}$.  Lemma \ref{ineffectiveGaps} implies $\lceil \frac{m(S)}{2}\rceil \le b \le m(S)-1$.  Without loss of generality, assume $n_i < n_j$.  Note that $F(S') = n_i < n_j$.  We complete the proof by showing that $n_j$ is an effective generator of $S'$.

Consider the semigroup $T = S' \cup \{F(S')\} = S'' \cup\{F(S'')\} = S \cup \{b\}$.  Since $T \setminus \{n_i\}$ and $T \setminus \{n_j\}$ are numerical semigroups, we see that $n_i$ and $n_j$ are both minimal generators of $T$.

We now show that $n_j$ is an effective generator of $S'$.  Since $F(S') = n_i < n_j$, we need only show that $n_j$ is a minimal generator of $S'$.  Suppose that it is not.  Then $n_j$ is a linear combination with nonnegative coefficients of elements of $S'$ that are less than $n_j$.  Since $S''$ contains every element of $S'$ less than $n_j$, we see that $n_j$ cannot be a minimal generator of $T = S \cup\{b\}$, which is a contradiction.
\end{proof}
We give a simple example to show that this result is sharp.  Let $S = \langle 7,8,10,11,12,13\rangle$.  One can check that $S$ has $3$ children in $\T_7$ that have no effective generators.

Examining the trees $\T_g$ for small values of $g$, it seems that every semigroup $S$ with at least one child in $\T_g$ has a large number of effective generators relative to $m(S)$.  We show that this does not necessarily hold for large genus. 
\begin{prop}\label{prop:h4_parent}
There exist infinitely many numerical semigroups $S$ such that $h(S) = 4$ and $S$ has at least one child in the ordinarization tree.
\end{prop}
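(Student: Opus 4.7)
My plan is to prove the proposition by exhibiting an explicit infinite family. For each integer $k \ge 1$, let $S_k$ be the set consisting of $0$, the integers $2k+3, 2k+4, \ldots, 3k+2$, and all integers at least $4k+3$; when $k=1$ the middle block reduces to the single element $\{5\}$. Since $m(S_k)=2k+3$ varies with $k$, the $S_k$ are pairwise distinct. One first verifies that $S_k$ is a numerical semigroup: any sum of two positive elements is at least $2(2k+3)=4k+6 \ge 4k+3$ and hence lies in $S_k$; the gap set is $\{1, \ldots, 2k+2\} \cup \{3k+3, \ldots, 4k+2\}$, so $F(S_k)=4k+2$.

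Next I would show $h(S_k)=4$. The $k$ block elements $2k+3, \ldots, 3k+2$ are minimal generators but all lie at or below $3k+2 < 4k+2 = F(S_k)$, so none is effective. For integers at least $4k+3$, the union of all two-fold sums of positive elements of $S_k$ equals $\{4k+6, \ldots, 6k+4\} \cup \{6k+6, 6k+7, \ldots\}$, coming from block-plus-block and block-plus-ray respectively (the ray-plus-ray sums lie in the second interval). The complement of this union in $\{4k+3, 4k+4, \ldots\}$ is $\{4k+3, 4k+4, 4k+5, 6k+5\}$, so these four are the only minimal generators of $S_k$ above the block, and all exceed $F(S_k)$. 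Hence $\eg(S_k) = \{4k+3, 4k+4, 4k+5, 6k+5\}$ and $h(S_k) = 4$.

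Finally I would construct a child $S'_k$ of $S_k$ by setting $a := 4k+3 \in \eg(S_k)$ and $b := 2k+2$. Then $S'_k := (S_k \setminus \{a\}) \cup \{b\}$ equals the set consisting of $0$, the integers $2k+2, 2k+3, \ldots, 3k+2$, and all integers at least $4k+4$. The checks are: $\lceil m(S_k)/2 \rceil = k+2 \le b = 2k+2 \le m(S_k)-1$; $2b = 4k+4 \in S_k$ and $2b \ne a$; $a-b = 2k+1$ lies in $\{1, \ldots, 2k+2\}$ and so is a gap of $S_k$; and for every nonzero $s \in S_k \setminus \{a\}$ we have $b + s \ge 4k+5 \in S_k$. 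Together with the analysis preceding Theorem \ref{Thm:children_h0}, these imply $S'_k$ is a numerical semigroup with $m(S'_k) = b$ and $F(S'_k) = a$, so $S_k$ is the ordinarization transform of $S'_k$ and $S'_k$ is a child of $S_k$ in $\T_{g(S_k)}$.

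The main obstacle is not the verification itself, which is essentially arithmetic, but the discovery of the right family: the two requirements $h(S) = 4$ and existence of a child tend to compete, because generic "block plus ray" semigroups either have $h$ growing linearly with the multiplicity or fail the closure condition $b + s \in S$ for the small values of $b$ permitted by $b < m(S)$ and $2b \in S$. The parameters $m(S_k) = 2k+3$ and ray start $4k+3$ are calibrated so that choosing $b = m(S_k) - 1$ cleanly produces exactly four effective generators while placing $a - b$ just below the multiplicity, making all transform conditions hold uniformly in $k$.
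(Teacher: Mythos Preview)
Your argument is correct. The family $S_k=\{0\}\cup\{2k+3,\ldots,3k+2\}\cup\{n\ge 4k+3\}$ works exactly as you claim: the two-fold sums are $\{4k+6,\ldots,6k+4\}\cup\{6k+6,6k+7,\ldots\}$, leaving precisely the four effective generators $4k+3,4k+4,4k+5,6k+5$, and the swap $(a,b)=(4k+3,2k+2)$ produces a genuine child since $2b=4k+4$ and $b+s\ge 4k+5$ both land strictly above $a$ in $S'_k$. One small comment: your appeal to ``the analysis preceding Theorem~\ref{Thm:children_h0}'' is unnecessary and slightly misleading, since Lemma~\ref{ineffectiveGaps} states necessary rather than sufficient conditions; your own direct checks already show $S'_k$ is closed with $m(S'_k)=b$ and $F(S'_k)=a$, which is all that is needed.

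Your route differs from the paper's. The paper builds a family $T_k$ of multiplicity $2k+1$ via the Kunz coordinate vector $(k,k,k-1,k-1,\ldots,2,2,1,1)$, so that the elements of $T_k$ below its Frobenius number spread across roughly $k$ separate intervals, and then checks that $T_k\cup\{2k\}\setminus\{k(2k+1)+1\}$ is a child. Your semigroups are structurally much simpler---just one short block below a ray---which makes the computation of the effective generators and the closure verification for the child entirely transparent; by contrast, the paper's construction requires tracking the full Ap\'ery set to identify $\eg(T_k)$. The paper's family has genus growing quadratically in $k$ while yours grows linearly, but for the bare existence statement your more elementary example is arguably preferable.
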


In order to describe the family of numerical semigroups used to prove this proposition, we introduce some additional notation.  The \emph{Ap\'ery set} of $S$ with respect to its multiplicity $m(S) = m$ is $\Ap(S;m) = \{n \in S \colon n -m \not\in S\}$.  It is not difficult to see that $\Ap(S;m)$ consists of one element in each residue class modulo $m(S)$.  Let 
\[
\Ap(S;m) = \{0,k_1 m + 1,\ldots, k_{m-1}m + m-1\}.
\]  
Since $m$ is the multiplicity of $S$, each $k_i \ge 1$.  The Ap\'ery set of a numerical semigroup with respect to its multiplicity $m$ determines the set of gaps in each residue class modulo $m$.  This implies that $F(S) = \max \Ap(S;m) - m$ \cite[Proposition 2.12]{GarciaSanchez_Rosales}.  It is not difficult to see that an element of $\Ap(S;m)$ is a minimal generator of $S$ if and only if it is not the sum of two other elements of $\Ap(S;m)$.  The \emph{Kunz coordinate vector},  or \emph{Ap\'ery tuple}, of $S$ is $(k_1,\ldots, k_{m-1})$.  Not every vector of $m-1$ nonnegative integers is the Kunz coordinate vector of a numerical semigroup of mutliplicity $m$.  For a characterization of the tuples $(k_1,\ldots, k_{m-1})$ that occur as Kunz coordinate vectors, see \cite{kunz, kunzcoords}.

\begin{proof}[Proof of Proposition \ref{prop:h4_parent}]
For any positive integer $k$ there is a numerical semigroup $T_k$ of multiplicity $2k+1$ with Kunz coordinate vector $(k,k,k-1,k-1,\ldots,2,2,1,1)$.  It is an exercise to check that $T_k' = T_k \cup \{2k\}$ is closed under addition.  We can verify that $F(T_k) = (k-1) (2k+1) +2$ and when $k \ge 2$ we have 
\[
\eg(T_k) = \{(2k+1)k+1, (2k+1)k+2,(2k+1)(k-1)+3,(2k+1)(k-1)+4\},
\] 
and therefore $h(T_k) = 4$. It is not difficult to check that $T_k'' = T_k \cup \{2k\} \setminus \{k(2k+1) +1\}$ is a child of $T_k$ in the ordinarization tree.
\end{proof}

\section{Semigroups with Fixed Ordinarization Number}\label{sec:ng2}

The main motivation for this section comes from Conjecture \ref{nrg_conj}.
\begin{question}
Let $r$ be a fixed positive integer.  How does $n_{g,r}$ behave as a function of $g$?
\end{question}

Proposition \ref{Ord1} answers this question when $r=1$.  There is one quadratic polynomial that gives the formula for $n_{g,1}$ when $g$ is even, and a different quadratic polynomial that gives the formula for $n_{g,1}$ when $g$ is odd.  We show that this kind of behavior holds for any value of $r$.
\begin{defn}
A \emph{quasipolynomial} $Q$ is an expression of the form
\[
Q(t) = c_n(t) t^n + \cdots + c_1(t) t + c_0(t)
\]
where $c_0,\ldots, c_n$ are periodic functions in $t$.  Assume that $c_n$ is not the zero function.  The \emph{degree} of $Q$ is $n$, and the least common period of $c_0,\ldots, c_n$ is the \emph{period} of $Q$.

A function $f\colon \N \rightarrow \N$ is \emph{eventually quasipolynomial} if there exists a quasipolynomial $Q$ such that $f(t) = Q(t)$ for all sufficiently large $t$.  
\end{defn}

\begin{example}
The formula for $n_{g,1}$ from Proposition \ref{Ord1} is a quasipolynomial of degree $2$ and period $2$.  We see that $c_2$ is the constant function $\frac{3}{8}$ and each of $c_1$ and $c_0$ have period~$2$.
\end{example}
The main result of this section is to show that we get a formula similar to this one for any fixed value of $r$.

\begin{thm}\label{thm:quasi_ord}
Fix a positive integer $r$.  There is a formula for $n_{g,r}$ that is eventually quasipolynomial in $g$ of degree $2r$.
\end{thm}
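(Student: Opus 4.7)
The plan is to realize $n_{g,r}$ as a count of integer points in a finite union of rational polyhedra whose defining inequalities depend affinely on $g$, and then invoke Ehrhart theory. By Proposition \ref{Prop_count_small}, a numerical semigroup $S$ of genus $g$ with $r(S)=r$ is uniquely determined by the pair $(\vec s,\vec t)\in\Z^{2r}$, where $\vec s=(s_1,\ldots,s_r)$ lists the non-gaps of $S$ in $\{1,\ldots,g\}$ and $\vec t=(t_1,\ldots,t_r)$ lists the gaps of $S$ greater than $g$, each in increasing order. Both tuples have length exactly $r$ because the genus is $g$, and the classical bound $F(S)\le 2g-1$ forces $t_r\le 2g-1$. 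Closure of $S$ under addition translates to the conditions that for every $1\le i\le j\le r$ either $s_i+s_j\le g$ and $s_i+s_j\in\{s_1,\ldots,s_r\}$, or $s_i+s_j>g$ and $s_i+s_j\notin\{t_1,\ldots,t_r\}$, and that for every $i,j$ with $t_j-s_i>g$ one has $t_j-s_i\in\{t_1,\ldots,t_r\}$.

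The next step is to stratify by combinatorial type. A type $\tau$ specifies, for every pair $(i,j)$ with $i\le j$, either that $s_i+s_j>g$ or an index $k(i,j)\in\{1,\ldots,r\}$ with $s_i+s_j=s_{k(i,j)}$; and for every pair $(i,j)$ with $1\le i,j\le r$, either $t_j-s_i\le g$ or an index $l(i,j)$ with $t_j-s_i=t_{l(i,j)}$. There are only finitely many such types. For each feasible $\tau$, the set of valid $(\vec s,\vec t)\in\Z^{2r}$ is the set of integer points in a rational polyhedron $Q_\tau(g)\subset\R^{2r}$ whose defining linear equalities and inequalities have constant terms affine in $g$, minus finitely many affine hyperplane slices of the form $\{s_i+s_j=t_l\}$ required in the $>g$ regime. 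Under the substitution $(x_i,y_j)=(s_i,t_j)/g$, each $Q_\tau(g)$ becomes, up to an $O(1/g)$ shift of its facets coming from the $+1$ in $g+1$, the $g$-fold dilate of a fixed rational polytope $P_\tau\subset\R^{2r}$. Ehrhart's quasipolynomial theorem then shows that $|gP_\tau\cap\Z^{2r}|$ is eventually quasipolynomial in $g$ of degree $\dim P_\tau\le 2r$, and each excluded hyperplane slice contributes an eventually quasipolynomial of degree at most $2r-1$. Summing over the finitely many types yields that $n_{g,r}$ is eventually quasipolynomial in $g$ of degree at most $2r$.

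To see that the degree equals $2r$, single out a full-dimensional type $\tau_0$: require $s_1>g/2$ (so that $s_i+s_j>g$ automatically for all $i,j$) and $t_r\le g+s_1$ (so that the condition on $t_j-s_i$ is vacuous), with no equality constraints imposed at all. The corresponding polytope $P_{\tau_0}$ is the closure of $\{(\vec x,\vec y)\in\R^{2r}\colon 1/2<x_1<\cdots<x_r\le 1,\ 1<y_1<\cdots<y_r\le 1+x_1\}$, which is $2r$-dimensional. The finitely many excluded hyperplanes $\{s_i+s_j=t_l\}$ contribute only $O(g^{2r-1})$, so the contribution of $\tau_0$ alone already has leading term $\Theta(g^{2r})$, matching the quadratic growth of $n_{g,1}$ in Proposition \ref{Ord1} when $r=1$.

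The main obstacle is the combinatorial bookkeeping for the stratification: the disjunction in the closure conditions must be resolved by choosing a specific index (Ehrhart theory applies to individual rational polyhedra, not to disjunctive unions), one must verify that each valid configuration lies in exactly one $Q_\tau(g)$, and the excluded hyperplanes $\{s_i+s_j=t_l\}$ must be handled by inclusion-exclusion. A secondary subtlety is that the $+1$ in the inequality $t_j-s_i\ge g+1$ shifts a facet of $Q_\tau(g)$ by $1$ away from the naive $g$-fold dilate of $P_\tau$, which is why the conclusion is stated as ``eventually'' quasipolynomial rather than quasipolynomial for every $g\ge 1$.
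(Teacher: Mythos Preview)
Your approach is essentially the paper's: parametrize by the $2r$-tuple of small non-gaps and large gaps, stratify by which equalities among $s_i+s_j$ and $t_j-s_i$ hold, and count lattice points in each stratum by Ehrhart-type methods, then exhibit a full-dimensional stratum to pin down the degree. In fact your closure analysis is slightly more complete than the paper's sketch of this theorem, since you explicitly record the $t_j-s_i$ conditions that the paper only spells out later in the $r=2$ computation.

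The one step that does not go through as written is the Ehrhart invocation. Your polytopes $Q_\tau(g)$ are cut out by inequalities whose constant terms are \emph{affine} in $g$ (e.g.\ $t_1\ge g+1$, $t_r\le 2g-1$), so $Q_\tau(g)$ is not the $g$-fold dilate of a fixed $P_\tau$; after rescaling, the facets move by $O(1/g)$, which in the original coordinates is an $O(1)$ shift. Applying classical Ehrhart to $gP_\tau$ therefore tells you about $|gP_\tau\cap\Z^{2r}|$, not about $|Q_\tau(g)\cap\Z^{2r}|$, and these can differ by terms of top degree, not merely lower order. The paper handles this by homogenizing: it introduces $g$ as an additional coordinate, passes to a single rational cone in $\R^{2r+1}$, and counts integer points with fixed $g$-coordinate via the Hilbert--Serre form of Ehrhart's theorem for graded cones. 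That device (or equivalently any parametric-Ehrhart statement for polyhedra with affine right-hand sides) is exactly what you need to make your ``up to an $O(1/g)$ shift'' remark rigorous, and with it your argument is complete.
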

Before giving the proof of this theorem we focus on the case $r=2$.  The semigroups of genus $g$ and ordinarization number $2$ are the semigroups that are the children of the children, the `grandchildren', of the ordinary numerical semigroup $S_g = \{0,g+1,g+2,\ldots \}$. 

\begin{thm}\label{Thm:ng2}
There is a formula for $n_{g,2}$ that is a quasipolynomial in $g$ of degree $4$ and period $12$.  That is, there exist polynomials $f_0(x),\ldots, f_{11}(x) \in \Q[x]$ such that $n_{g,2} = f_i(g)$ whenever $g$ is a positive integer satisfying $g \equiv i \pmod{12}$.  Each of these polynomials has leading coefficient equal to $\frac{11}{384}$, so for ease of notation we define $f_i'(x) = \frac{384}{11} f_i(x)$.  These polynomials are as follows: 
\begin{eqnarray*}
f_0'(x) & = &  x  (x^3 - 2548/297 x^2 + 336/11 x - 1376/33) \\
 f_1'(x) & = &   (x - 1) (x^3 - 1927/297 x^2 + 3611/297 x - 541/297)\\
  f_2'(x) & = & (x - 2)   (x^3 - 1954/297 x^2 + 5548/297 x - 3352/297) \\
    f_3'(x) &=& (x - 3)  (x^3 - 1333/297 x^2 + 449/99 x + 51/11)\\
  f_4'(x) &=&  (x^4 - 2548/297 x^3 + 3088/99 x^2 - 4192/99 x - 512/297)\\
 f_5'(x) &= & (x^4 - 2224/297 x^3 + 1910/99 x^2 - 1576/99 x - 6499/297)\\
  f_6'(x) &=& (x^4 - 2548/297 x^3 + 336/11 x^2 - 1520/33 x + 48)\\
    f_7'(x) &=& (x^4 - 2224/297 x^3 + 1846/99 x^2 - 952/99 x - 4643/297)\\
  f_8'(x) &= & (x^4 - 2548/297 x^3 + 3152/99 x^2 - 4384/99 x - 7552/297)\\
 f_9'(x) &= & (x^4 - 2224/297 x^3 + 18 x^2 - 40/3 x + 39/11)\\
  f_{10}'(x) &= &(x^4 - 2548/297 x^3 + 3088/99 x^2 - 4624/99 x + 13744/297)\\
    f_{11}'(x) &= & (x + 1)  (x^3 - 2521/297 x^2 + 8251/297 x - 11683/297).
\end{eqnarray*}

\end{thm}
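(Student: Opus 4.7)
The plan is to encode each numerical semigroup $S$ of genus $g$ with $r(S) = 2$ as a lattice point in a family of rational polytopes dilated by $g$, and then to apply Ehrhart theory. By Proposition \ref{Prop_count_small}, such an $S$ satisfies $|S \cap \{1, \ldots, g\}| = 2$; write these two elements as $a < b$. Since $S$ has exactly $g$ gaps, the remaining two must lie above $g$, say $g < c_1 < c_2$. Thus $S$ is uniquely represented by a tuple $(a, b, c_1, c_2) \in \Z^4$ with $1 \le a < b \le g < c_1 < c_2$, and the task reduces to counting such tuples for which
\[
S_{a, b, c_1, c_2} := \{0, a, b\} \cup (\{g+1, g+2, \ldots\} \setminus \{c_1, c_2\})
\]
is closed under addition.

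First I would translate closure into explicit linear constraints. Examining sums of small elements forces $a + b > g$, $2b > g$, and either $2a > g$ or $b = 2a$. Requiring $c_1$ not to be $x + y$ for any $x, y \in S_{a,b,c_1,c_2}$ yields $c_1 \le g + a$ together with $c_1 \notin \{2a, a+b, 2b\}$. The analogous analysis for $c_2$ is richer: the conditions ``$c_2 - a \notin S$'' and ``$c_2 - b \notin S$'' each give a disjunction, namely $c_2 \le g + a$ or $c_2 = c_1 + a$, and likewise with $b$ in place of $a$; and the requirement that $c_2 - y \notin S$ for each $y \in S$ with $g < y < c_2$ bounds $c_2 \le 2g$ together with boundary coincidences. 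These disjunctions partition the set of valid tuples into finitely many cases, in each of which the feasible region is cut out by linear (in)equalities in $a, b, c_1, c_2$ with coefficients in $\Z[g]$, minus a finite union of exclusion hyperplanes corresponding to $c_i \ne 2a,\, a+b,\, 2b$.

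With the case decomposition in hand, I would invoke Ehrhart theory. In each case, the valid tuples are the lattice points of a dilate $g \cdot P$ of a fixed rational polytope $P \subset \R^4$, minus a finite union of lower-dimensional affine strata. By Ehrhart's theorem, combined with inclusion-exclusion to remove the excluded strata, the count in each case is a quasipolynomial in $g$ of degree at most $4$ whose period divides the LCM of the denominators of the vertices of $P$. Summing over cases and applying inclusion-exclusion on the overlaps produces a single quasipolynomial expression for $n_{g, 2}$; tracking the halves, thirds, and quarters introduced by inequalities such as $2b > g$, $b = 2a$, and the bound $c_2 \le 2g$ together with their interactions shows that the overall period divides $12$. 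The explicit polynomials $f_0', \ldots, f_{11}'$ are then pinned down by matching the formula against directly computed values of $n_{g, 2}$ for $g$ ranging over one full period, with a finite check handling small values of $g$.

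The hard part will be the detailed case analysis surrounding $c_2$: each disjunction ($c_2 \le g + a$ versus $c_2 = c_1 + a$, and likewise with $b$), each boundary interaction near $c_2 = 2g$, and each exclusion hyperplane must be folded into the count correctly, and the intersections of the resulting cases must be tracked so that inclusion-exclusion is applied accurately. Organizing this bookkeeping so that no configuration is missed or double-counted is the most intricate and error-prone step.
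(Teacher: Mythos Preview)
Your approach is essentially the paper's: parametrize semigroups with $r(S)=2$ by the $4$-tuple of the two small elements and the two large gaps, translate closure into linear (in)equalities with a handful of disjunctions, split into cases, and combine Ehrhart-type lattice-point counts by inclusion--exclusion. The one technical point the paper handles more carefully is that the regions are not literal dilates $g\cdot P$ (constraints such as $c_1\ge g+1$ and $c_2\le 2g-1$ are inhomogeneous), so the paper lifts to a cone in $\R^5$ with $g$ as an extra coordinate and computes graded Hilbert series via Normaliz to extract the exact quasipolynomial; once you make that adjustment, your plan coincides with the paper's, which obtains the twelve $f_i'$ from the Hilbert series rather than by interpolating against computed values of $n_{g,2}$.
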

Bras-Amor\'os gives the values for $n_{g,2}$ for $g \le 49$ in \cite{Bras-Amoros_ord}. We have checked that the formula given here is consistent with these computations.

It is not difficult to check that this quasipolynomial formula is increasing in $g$.
\begin{cor}
For all $g\ge 1$, we have $n_{g,2} \le n_{g+1,2}$.  That is, Conjecture \ref{nrg_conj} holds for $r=2$.
\end{cor}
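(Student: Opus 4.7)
The plan is to leverage the explicit quasipolynomial description of $n_{g,2}$ from Theorem \ref{Thm:ng2} and verify monotonicity one residue class at a time. Writing $g \equiv i \pmod{12}$ with $0 \le i \le 11$, the inequality $n_{g,2} \le n_{g+1,2}$ becomes
\[
D_i(g) := f_{(i+1)\bmod 12}(g+1) - f_i(g) \ge 0,
\]
and one must verify this for each of the twelve values of $i$, for all $g \ge 1$ with $g \equiv i \pmod{12}$.

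Since every $f_j$ has the same leading coefficient $\frac{11}{384}$, the degree-$4$ terms in $D_i$ cancel. The dominant contribution to $D_i(g)$ comes from $\frac{11}{384}\bigl((g+1)^4 - g^4\bigr) = \frac{11}{96} g^3 + O(g^2)$, so each $D_i$ is a polynomial in $g$ of degree exactly $3$ with positive leading coefficient $\frac{11}{96}$. In particular, for each $i$ there is an explicit threshold $g_i^\star$ beyond which $D_i(g) > 0$; this threshold can be bounded uniformly by a standard root estimate (for instance, Cauchy's bound) in terms of the coefficients obtained by expanding $f_{(i+1)\bmod 12}(g+1) - f_i(g)$.

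It then remains to check the finitely many $g$ below the threshold in each residue class. One can proceed in either of two equivalent ways: (i) directly evaluate the twelve polynomials $D_i(g)$ at the relevant small inputs $g \in \{i, i+12, i+24, \ldots\}$ up to $g_i^\star$, or (ii) appeal to the values of $n_{g,2}$ tabulated by Bras-Amor\'os for $g \le 49$ in \cite{Bras-Amoros_ord}, which are already known to match Theorem \ref{Thm:ng2}, and compare consecutive entries by inspection.

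The main obstacle is not conceptual but organizational: one must compute twelve explicit cubic differences, bound their real roots to identify the crossover threshold in each residue class, and then dispatch the finitely many small cases. Each of these tasks is routine symbolic algebra, and together they confirm $n_{g,2} \le n_{g+1,2}$ for all $g \ge 1$, establishing Conjecture \ref{nrg_conj} in the case $r = 2$.
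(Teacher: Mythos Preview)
Your proposal is correct and matches the paper's approach: the paper simply remarks that ``it is not difficult to check that this quasipolynomial formula is increasing in $g$'' and leaves the verification to the reader, while you spell out how that check is organized (twelve cubic differences $D_i$, positive leading coefficient $\tfrac{11}{96}$, then a finite case check). There is no substantive difference in method.
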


\begin{remark}
In Theorem \ref{thm:quasi_ord} we show that for a fixed positive integer $r$, the sequence $n_{g,r}$ is \emph{eventually} quasipolynomial.  But, the formula for $r=1$ given in Proposition \ref{Ord1} and the formula for $r=2$ given in Theorem \ref{Thm:ng2}, are quasipolynomial.  That is, these formulas hold for all positive values of $g$, not only values of $g$ that are `sufficiently large'.  We do not know whether it is possible to replace `eventually quasipolynomial' with `quasipolynomial' in the statement of Theorem \ref{thm:quasi_ord}.
\end{remark}

We first recall a simple proposition about the gaps of a numerical semigroup of genus $g$.
\begin{prop}\label{prop:Fbound}\cite[Lemma 2.14]{GarciaSanchez_Rosales}
Let $S$ be a numerical semigroup.  Then $F(S) \le 2g(S)-~1$.
\end{prop}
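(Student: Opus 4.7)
The plan is to prove the bound by a symmetry/involution argument on the interval $\{0,1,\ldots,F(S)\}$, exploiting the fact that $S$ is closed under addition together with the fact that $F(S)$ is itself a gap. I would set up an injection from the set of \emph{small} elements of $S$ into the set of gaps, from which the stated inequality falls out by simple counting.

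Concretely, first I would partition $\{0,1,\ldots,F(S)\}$ into $A = S \cap \{0,1,\ldots,F(S)\}$ and $B = \N_0 \setminus S = $ the set of gaps, noting $|A| + |B| = F(S)+1$ and $|B| = g(S)$. Next, I would consider the involution $\iota(n) = F(S)-n$ on $\{0,1,\ldots,F(S)\}$ and claim that $\iota(A) \subseteq B$. Indeed, if $s \in A$ but $F(S) - s$ were also in $S$, then $F(S) = s + (F(S)-s)$ would lie in $S$, contradicting the fact that $F(S)$ is a gap. Since $\iota$ is a bijection of $\{0,\ldots,F(S)\}$ with itself, the restriction $\iota|_A \colon A \hookrightarrow B$ is injective, giving $|A| \le |B| = g(S)$. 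Substituting into $|A| + |B| = F(S)+1$ yields $F(S)+1 \le 2g(S)$, i.e., $F(S) \le 2g(S) - 1$.

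There is no real obstacle here, as the argument rests only on closure of $S$ under addition and the definition of $F(S)$. The only minor point to handle cleanly is the edge case $s = 0 \in A$, for which $\iota(0) = F(S)$ is indeed in $B$. It is also worth noting in passing that the bound is tight, attained by the ordinary semigroup $S_g = \{0, g+1, g+2, \ldots\}$ with $F(S_g) = g$ and $g(S_g) = g$ — no, wait, that gives $F = g$ and $2g-1$, so not tight there; the tight case is the semigroup $\langle 2, 2g+1 \rangle$ with $F = 2g-1$ and genus $g$, matching the extremal family appearing in Proposition \ref{Prop_BA_m2}.
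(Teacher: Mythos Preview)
Your argument is correct and is exactly the standard proof of this fact. Note, however, that the paper does not actually supply its own proof of Proposition~\ref{prop:Fbound}; it simply cites \cite[Lemma~2.14]{GarciaSanchez_Rosales}, where precisely this involution argument appears. So there is nothing to compare against beyond observing that your write-up matches the textbook proof.

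One small stylistic point: the closing paragraph about tightness, with its ``no, wait'' self-correction, reads as scratch work rather than finished exposition. If you want to include a tightness remark, simply state that equality $F(S)=2g(S)-1$ holds precisely for symmetric numerical semigroups (those for which the injection $\iota|_A$ is a bijection), with $\langle 2,2g+1\rangle$ as a concrete example; the ordinary semigroup $S_g$ is not extremal here and need not be mentioned.
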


We also recall some basic definitions from polyhedral geometry.
\begin{defn}
A \emph{rational polyhedron} $P \subset \R^d$ is the set of solutions to a finite list of linear inequalities with rational coefficients.  That is,
\[
P = \{x \in \R^d \colon A x \ge b\}
\]
for some matrix $A$ and vector $b$ with rational entries.  If $P$ is bounded, we say that it is a \emph{rational polytope}.  If $b$ is the zero vector, then the system defining $P$ is called \emph{homogeneous}. If $b$ is nonzero, this system is \emph{inhomogeneous}.

Let $P \subset \R^d$ be a rational polytope and $n\in \N$. Define 
\[
i(P,n) = \#\{nP\cap \Z^d\}
\]
where $nP = \{n \alpha\colon \alpha \in P\}$.  The function $i(P,n)$ is called the \emph{Ehrhart quasipolynomial} of $P$ and the function
\[
1+ \sum_{n=0}^\infty i(P,n) z^n
\]
is the \emph{Ehrhart series} of $P$.
\end{defn}
A famous theorem of Ehrhart, see for example \cite[Theorem 4.6.8]{StanleyEC1}, implies that if $P$ is a rational polytope, $i(P,n)$ is a quasipolynomial in $n$.

\begin{proof}[Proof of Theorem \ref{thm:quasi_ord}]
Suppose $S \in \Sg_g$ has ordinarization number $r$.  Then 
\[
S = S_g \setminus \{a_1,\ldots, a_r\} \cup \{b_1,\ldots, b_r\}
\] 
where $b_1,\ldots, b_r \le g$.  Without loss of generality we can assume that 
\[
1\le b_r < b_{r-1} < \cdots < b_1 \le g\ \ \ \text{ and }\ \ \  g+1 \le a_1 < a_2 < \cdots < a_r \le 2g-1,
\] 
where we have applied Proposition \ref{prop:Fbound} to see that $a_r \le 2g-1$.  We find the conditions on $(a_1,\ldots, a_r, b_1, \ldots, b_r) \in \Z_{\ge 1}^{2r}$ that determine whether $S$ is closed under addition.

First suppose that $x,y \in S \setminus \{b_1,\ldots, b_r\}$.  Then $x+y \ge 2(g+1) > a_r = F(S)$, so $x+y \in S$. 

Consider $b_i + b_j$ where $1\le i \le j \le r$.  If $b_i + b_j \le g$ we must have $b_i + b_j = b_k$ for some $k$ satisfying $1\le k < i$.   If $b_i + b_j \ge g+1$, then $b_i + b_j \in S$ if and only if $b_i + b_j \neq a_k$ for any $k \in \{1,2,\ldots, r\}$.

For each pair $(i,j)$ satisfying $1\le i \le j \le r$ we have a choice to make.  
\begin{itemize}
\item Either $b_i + b_j = b_k$ for some $k$ satisfying $1 \le k < i$, or 
\item $b_i + b_j = a_k$ for some $k \in \{1,2,\ldots, r\}$, or
\item neither of these situations occurs.
\end{itemize}
For each pair $(i,j)$ we choose whether $b_i + b_j = b_k$ for some $k$, or $b_i + b_j = a_k$ for some $k$, or neither of these equalities holds.  This collection of equalities together with the conditions that $1\le b_r < b_{r-1} < \cdots < b_1 \le g$ and $g+1 \le a_1 < a_2 < \cdots < a_r \le 2g-1$, defines a polytope $P(g) \subset \R_{\ge 0}^{2r}$.  In order to deal with strict inequalities like $b_i < b_{i-1}$ we can impose additional constraints of the form $b_i \le b_{i-1}+1$.  In this way, we can describe $P(g)$ without using any strict inequalities.

There is a finite collection of polytopes $P_1(g),\ldots, P_N(g)$ that arise in this analysis.  
Let $N_i(g)$ be the number of integer points in $P_i(g)$.  Applying inclusion-exclusion, we see that $n_{g,r}$ is equal to a linear combination with integer coefficients of the functions $N_i(g)$.

The function $N_j(g)$ is not given by counting integer points in the $g$\textsuperscript{th} dilate of the polytope $P_j(1)$, so we cannot directly apply the most basic version of Ehrhart's theorem.  However, there are standard tools in Ehrhart theory to deal with this kind of enumeration in the inhomogeneous case.  We first introduce a new variable corresponding to the value of $g$ so that for each $j$, the collection of polytopes $P_j(g)$ as $g$ varies can be considered in terms of a single rational polyhedron $P_j' \subset \R^{2r+1}$.  One can replace an arbitrary polyhedron $P \subset \R^d$ with the \emph{cone over $P$} in $\R^{d+1}$, which amounts to passing from an inhomogeneous system to a homogeneous one by introducing an additional homogenizing variable \cite[Section 4]{normaliz}.  We then want to count the number of integer points in this polyhedron with a fixed value of $g$.  We can do this by choosing a grading on $\Z^d$ given by the value of this coordinate corresponding to $g$.  A rational cone $C$ and a grading together define a rational polytope, and it is the Ehrhart series of this polytope that we want to compute \cite[Section A.6]{Normaliz2}.  We apply these ideas to the polytopes $P_1(g),\ldots, P_N(g)$.  A variation of Ehrhart's theorem attributed to Ehrhart, Stanley, and Hilbert-Serre in \cite[Theorem 15]{normaliz}, then implies that each $N_i(g)$ is eventually quasipolynomial of degree at most~$2r$.

We now show that the degree of the final quasipolynomial formula for $n_{g,r}$ that holds for all $g$ sufficiently large cannot be less than $2r$.  We do this by giving a lower bound for $n_{g,r}$ that is at least a constant times $g^{2r}$.  

Every choice of $(a_1,\ldots, a_r, b_1,\ldots, b_r)$ satisfying 
\[
\frac{2g}{3} \le b_r< b_{r-1}<\cdots < b_1 \le g <  a_1 < a_2 < \cdots < a_r \le \frac{4g}{3}-1
\] 
gives a numerical semigroup of ordinarization number $r$.  There is a constant $c$ such that for all $g$ sufficiently large, the number of these choices is at least $c g^{2r}$.
\end{proof}

We now show how to carry out the ideas described in the proof of this theorem for small values of $r$. We give a proof of Proposition \ref{Ord1} that is slightly different from the one in \cite{Bras-Amoros_ord}.  We then prove Theorem \ref{Thm:ng2}.  The computations in this section make extensive use of the computer algebra systems Sage \cite{SageMath} and Normaliz \cite{Normaliz_system}.
\begin{proof}[Proof of Proposition \ref{Ord1}]
Suppose $S$ is a child of the ordinary semigroup $S_g$ in $\T_g$.  Then $S = S_g \setminus \{a\} \cup \{b\}$ for some $b \in \{1,2,\ldots, g\}$ and $a \ge g+1$.  We count the children of $S_g$ by determining the conditions on $a$ and $b$ so that $S_g \setminus \{a\} \cup \{b\}$ is closed under addition, and counting the pairs $(a,b)$ satisfying these conditions. Proposition \ref{prop:Fbound} implies that $a \le  2g-1$ and Lemma \ref{ineffectiveGaps} implies that $2b \ge g+1$.  If $a - b \in S$, then $a-b + b  = a \in S$, which is a contradiction.  Therefore, we must have $a-b \le g$ and $a-b \neq b$. 

We check that if $(a,b)$ satisfies all of these conditions, then $S_g \setminus \{a\} \cup \{b\}$ is closed under addition.  This allows us to conclude that $S$ is a child of $S_g$ in $\T_g$.  
\begin{itemize}[leftmargin=*]
\item If $x\in S$ and $x > b$, then $x \ge g+1$.  If we add two elements of $S$ that are larger than $b$, their sum is at least $2g+2> F(S) = a$.  
\item By assumption $2b \neq a$. Since $2b \ge g+1$, we see that $2b \in S$.  
\item Suppose $x$ is a nonzero element of $S$ and $x \neq b$. Since $b+x \ge g+1$ and $a-b \not\in S$, we see that $b+x \neq a$.  We conclude that $b+x \in S$.
\end{itemize}

Let $P_1(g) \subseteq \R_{\ge 0}^2$ be the polytope defined by the inequalities: $g+1 \le a \le 2g-1,\ a-b \le g$, and $g \ge b \ge \frac{g+1}{2}$.  As described in the proof of Theorem \ref{thm:quasi_ord}, this family of polytopes can be studied in terms of the polyhedron $P' \subset \R^3$ where we introduce a new variable corresponding to $g$:
\[
P' = \left\{ \left(\begin{array}{c} 
a \\ b \\ g
\end{array}\right) \in \R_{\ge 0}^3 \colon
\left(\begin{array}{ccc} 
1 & 0 & -1\\
-1 & 0 & 2\\
0 & -1 & 1\\
-1 & 1 & 1\\
0 & 2 & -1
\end{array}\right)
\left(\begin{array}{c} 
a\\
b\\
g
\end{array}\right)
\ge 
\left(\begin{array}{c} 
1 \\
1 \\
0 \\
0 \\
1 
\end{array}\right)
\right\}.
\]
Let $P_2(g) \subseteq \R_{\ge 0}^2$ be the polytope defined by these inequalities together with the additional equality $2b = a$.  We write $N_1(g)$ for the number of integer points in $P_1(g)$ and $N_2(g)$ for the number of integer points in $P_2(g)$.  We see that $n_{g,1} = N_1(g) - N_2(g)$.  In order to verify the formula given in Proposition \ref{Ord1}, it is enough to show that $N_2(g) = \left\lfloor \frac{g-1}{2}\right\rfloor$, and 
\[
N_1(g) = 
\begin{cases} 
\frac{3}{8} (g+2) \left(g-\frac{4}{3}\right) & \text{if } g \equiv 0 \pmod{2} \\ 
\frac{3}{8} (g-1)\left(g+\frac{7}{3}\right) & \text{if } g \equiv 1 \pmod{2}
\end{cases}.
\]
We create the polyhedron $P'$ in the computer algebra system Sage and then count points in it with a given value of the coordinate corresponding to $g$.  We do this via the following commands:\\
\texttt{ord1poly = Polyhedron(ieqs=$[(-1,1,0,-1),(-1,-1,0,2),(0,0,-1,1),(0,-1,1,1),\\
(-1,0,2,-1)]$, backend = `normaliz')\\
hilb = ord1poly.hilbert\_series($[0,0,1]$)}.\\
We note that the vector $[0,0,1]$ specifies the grading based on the value of $g$.  

This command produces the Hilbert series 
 \[
\frac{t  (t^{2} + t + 1)}{ (1 + t)^{2}  (1 - t)^{3}}.
 \]
 From here, it is straightforward to deduce the formula for $N_1(g)$.  The analysis for $N_2(g)$ is similar.
\end{proof}

\begin{proof}[Proof of Theorem \ref{Thm:ng2}]
We follow the strategy of the proof of Proposition \ref{Ord1}.  Suppose $S \in \Sg_g$ has ordinarization number $2$.  Then $S = S_g \setminus \{a_1, a_2\} \cup \{b_1, b_2\}$ for some $1\le b_2 < b_1 \le g$ and $g+1 \le a_1 < a_2$.   Proposition \ref{prop:Fbound} implies that $a_2 \le 2g-1$.  We compute $n_{g,2}$ by determining the conditions on $(a_1,a_2,b_1,b_2)$ so that $S = S_g \setminus \{a_1, a_2\} \cup \{b_1, b_2\}$ is closed under addition, and counting the tuples $(a_1,a_2,b_1,b_2)$ satisfying these conditions.

Suppose $S$ is closed under addition.  Since $2b_2 \in S$, either $2b_2 = b_1$ or $2b_2 \ge g+1$.  None of $2b_2, b_1+b_2$, or $2b_1$ can be equal to $a_1$ or $a_2$.  Since $(a_2 - b_2) + b_2  = a_2$, we have $a_2 -b_2 \not\in S$.  Similarly $a_2-b_1, a_1-b_2, a_1-b_1 \not\in S$.  We see that $a_2 - b_2 \le g$ or $a_2 - b_2 = a_1$.  

We split the analysis into four cases based on whether $2b_2 = b_1$ and whether $a_2 - b_2 = a_1$.  We first consider that case where $2b_2 \neq b_1$ and $a_2-b_2 \neq a_1$.  

We check that if $(a_1,a_2,b_1,b_2) \in \Z_{\ge 1}^4$ satisfy the following conditions, then $S$ is closed under addition and is therefore a numerical semigroup of ordinarization number $2$:
\begin{eqnarray*}
1 \le b_2 < b_1 \le g, &  & g+1 \le a_1 < a_2 \le 2g-1, \\
2b_2 \ge g+1, \ & & 
a_2 - b_2 \le g, \\
b_i + b_j \neq a_k & & \text{ for any } i,j,k \in \{1,2\}.
\end{eqnarray*}

We have $2b_2 \ge g+1$, so we also have $2b_1 > b_1+b_2 > 2b_2 \ge g+1$.  By assumption $b_i+b_j \neq a_k$ for any $\{i,j,k\} \in \{1,2\}$. Since $a_1, a_2$ are the only gaps of $S$ of size at least $g+1$, we see that $2b_2, b_1 + b_2, 2b_1 \in S$.  Suppose $x \in S$ satisfies $x \ge g+1$.  Then $b_2 + x \in S$ since $b_2 + g+1 > a_2$.  Similarly, $b_1 + x \in S$.  The sum of two elements of $S$ of size at least $g+1$ has size at least $2(g+1)> a_2$.  We conclude that $S$ is closed under addition.  

In this first case we required that $2b_2 \ge g+1$ and $a_2 - b_2 \le g$.  If $2b_2 = b_1$, then the inequality $2b_2 \ge g+1$ does not hold.  We instead replace it with the inequality $b_1 + b_2 \ge g+1$.  If $a_2 - b_2 = a_1$, then the inequality $a_2 - b_2 \le g$ does not hold.  We instead replace it with the pair of inequalities $a_2 - b_1 \le g$ and $a_1 - b_2 \le g$.  The three remaining cases are similar and we omit the details.

For each positive integer $g$, the conditions 
\begin{eqnarray*}
1 \le b_2 < b_1  \le g, & &g+1 \le a_1 < a_2 \le 2g-1 \\
2b_2 \ge g+1, & & a_2 - b_2 \le g
\end{eqnarray*}
define a polytope $P^*(g) \subseteq \R_{\ge 0}^4$.  Let $N^*(g)$ be the number of integer points in $P^*(g)$.  It is not the case that $n_{g,2}$ is equal to $N^*(g)$ because we have not accounted for possibilities $(a_1,a_2,b_1,b_2)$ satisfying at least one of the following equalities: $2b_2 = b_1,\ a_2 - b_2 = a_1$, and $b_i+b_j = a_k$ for some $\{i,j,k\} \in \{1,2\}$.  

We proceed by applying inclusion-exclusion.  We specify a subset of these inequalities to hold and count points in the resulting polytope.  We combine these formulas at the end to obtain an expression for $n_{g,2}$.  The analysis is not as complicated as it might seem because many of these collections of equalities cannot occur together.  For example, if $2b_2 = a_2$, then $2b_1 > b_1 + b_2 > 2b_2 = a_2$, and so $2b_1$ and $b_1 + b_2$ cannot be equal to $a_1$ or $a_2$.

As described in the proof of Theorem \ref{thm:quasi_ord}, the family of polytopes $P^*(g)$ can be studied in terms of a polyhedron $P' \subset \R^5$ where we introduce a new variable corresponding to $g$.  We can use Sage to compute the Hilbert series of $P'$ as follows:\\
\texttt{\small{Pstar = Polyhedron(ieqs=$[(0,0,0,-1,0,1),(-1,0,0,1,-1,0),(-1,1,0,0,0,-1),\\
(-1,-1,1,0,0,0),(-1,0,-1,0,0,2),(0,0,-1,0,1,1),(-1,0,0,0,2,-1)]$, backend = `normaliz')\\
hilb = Pstar.hilbert\_series($[0,0,0,0,1]$)}}.\\
This returns the Hilbert series 
\[
-(t - 1)^{-5}  (t + 1)^{-4}  t^{3}  (t^{4} + 2t^{3} + 5t^{2} + 2t + 1),
\]
from which we can derive a quasipolynomial expression that holds for all positive integers $g$:
\[
N^*(g) = \begin{cases}
\frac{11}{384} (g - 2)  g  \left(g^2 - \frac{6}{11}g - \frac{8}{11}\right)
& \text{ if } g \equiv 0 \pmod{2} \\
\frac{11}{384}  (g - 1)  (g + 1)  \left(g^2 - \frac{16}{11}g - \frac{3}{11}\right)
& \text{ if } g \equiv 1 \pmod{2} 
\end{cases}.
\]

We give an example to show how to carry out the analysis in the cases where at least some of the equalities described above are satisfied.  Suppose $2b_2 = b_1$ and $a_1 = b_1+b_2$.  In this case, the inequality $2b_2 \ge g+1$ that was part of the definition of $P^*(g)$ does not necessarily hold, and instead is replaced with the new inequality $b_1+b_2 \ge g + 1$.  We have assumed that $b_1 + b_2 = a_1 > g$, so this inequality $b_1+b_2 \ge g +1$ gives no additional information.

We use Sage to compute the Hilbert series of the corresponding polyhedron as follows:\\
\texttt{\small{
PolyEx = Polyhedron(ieqs = $[(0,0,0,-1,0,1),(-1,0,0,1,-1,0),(-1,1,0,0,0,-1),\\
(-1,-1,1,0,0,0),(-1,0,-1,0,0,2),(0,0,-1,0,1,1)]$,\\
 eqns $= [(0,0,0,-1,2,0),(0,-1,0,1,1,0)]$, backend = `normaliz')\\
hilb = PolyEx.hilbert\_series($[0,0,0,0,1]$)}}.\\
This returns the Hilbert series 
 \[
\frac{t^5}{-{\left(t^{2} + t + 1\right)}^{2} {\left(t + 1\right)} {\left(t - 1\right)}^{3}},
 \]
from which we can derive a quasipolynomial expression of period $6$ for the number of integer points in this example polytope that holds for all positive integers $g$:
\[
\begin{cases}
\frac{1}{36} (g - 6)  g & \text{ if } g \equiv 0 \pmod{6} \\
\frac{1}{36}  (g - 1)^2  & \text{ if } g \equiv 1 \pmod{6} \\
\frac{1}{36} (g - 2)  (g+4) & \text{ if } g \equiv 2 \pmod{6} \\
\frac{1}{36}  (g - 3)^2  & \text{ if } g \equiv 3 \pmod{6} \\
\frac{1}{36} (g - 4)  (g+2) & \text{ if } g \equiv 4 \pmod{6} \\
\frac{1}{36}  (g + 1)^2  & \text{ if } g \equiv 5 \pmod{6} 
\end{cases}.
\]

Combining these formulas using the strategy described above completes the proof.  For readers who would like to see additional details, we have made the computations related to this part of the project available on the second author's website \footnote{\url{https://www.math.uci.edu/~nckaplan/research\_files/ordinarization}.}.
\end{proof}

\section{Ordinarization Numbers of Numerical Semigroups with Embedding Dimension $2$}\label{sec:ord_e2}

The goal of this section is to generalize the result of Bras-Amor\'os that $r\left(\langle 2,2g+1\rangle\right) = \left\lfloor\frac{g}{2}\right\rfloor$ to other families of numerical semigroups.  One natural way to generalize the family $\langle 2,2g+1\rangle$ is to consider arbitrary numerical semigroups with two minimal generators. We first recall the formula for the Frobenius number and genus of such a numerical semigroup.
\begin{proposition}\cite[Propositon 2.13]{GarciaSanchez_Rosales}\label{FG_2gen}
Let $S=\langle a,b\rangle$ be a numerical semigroup. Then,
\begin{enumerate}
\item $F(S) = ab-a-b$, and
\item $g(S) = \frac{ab-a-b+1}{2}$.
\end{enumerate}
\end{proposition}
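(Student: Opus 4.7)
The plan is to prove both parts for $S = \langle a, b\rangle$, where we may assume $\gcd(a,b) = 1$ (which is forced by $S$ having finite complement in $\N_0$). This is a classical Sylvester-Frobenius result, and I would approach it from first principles.

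For part (1), I would establish the Frobenius number in two steps. First, to show $ab - a - b \notin S$, I would suppose for contradiction that $ab - a - b = xa + yb$ for some $x, y \in \N_0$. Rewriting as $ab = (x+1)a + (y+1)b$, the coprimality of $a$ and $b$ forces $a \mid (y+1)$ and $b \mid (x+1)$, so $y + 1 \geq a$ and $x + 1 \geq b$. This gives $xa + yb \geq (b-1)a + (a-1)b = 2ab - a - b$, contradicting our assumption. Second, to show every $n > ab - a - b$ lies in $S$, I would use that the residues of $n, n - b, \ldots, n - (a-1)b$ cover all classes modulo $a$ (since $\gcd(a,b) = 1$). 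Thus some $n - kb$ with $0 \leq k \leq a - 1$ equals $ja$ for an integer $j$, and the bound $n > ab - a - b$ forces $ja > -a$, hence $j \geq 0$ and $n \in S$.

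For part (2), my approach is Sylvester's symmetry property: for each $n$ with $0 \leq n \leq F(S)$, exactly one of $n$ and $F(S) - n$ lies in $S$. Granting this, the genus is immediately $(F(S) + 1)/2 = (ab - a - b + 1)/2$, since every integer exceeding $F(S)$ is in $S$ by part (1), and the $F(S) + 1$ integers in $[0, F(S)]$ split evenly into gaps and nongaps under the symmetry (note $F(S) = ab-a-b$ is always odd when $\gcd(a,b)=1$ and $a,b\geq 2$, so $F(S)+1$ is even). The ``at most one'' half is easy: if both $n$ and $F(S) - n$ were in $S$, their sum $F(S)$ would be too, contradicting part (1). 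For the ``at least one'' half, I would write $n$ uniquely in the form $n = kb - ja$ where $k \in \{0, 1, \ldots, a-1\}$ is determined by $kb \equiv n \pmod{a}$. If $j \leq 0$, then $n = kb + (-j)a \in S$; so $n \notin S$ forces $j \geq 1$, and then $F(S) - n = (j-1)a + (a-1-k)b$ has nonnegative coefficients and lies in $S$.

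The main obstacle is setting up the canonical representation $n = kb - ja$ cleanly in the second half of the symmetry argument --- one needs uniqueness of this expression in order to conclude $j \geq 1$ from $n \notin S$, and to verify that the resulting coefficients $(j-1)$ and $(a-1-k)$ in the formula for $F(S)-n$ are genuinely nonnegative. Once this bookkeeping is in place, the rest is routine.
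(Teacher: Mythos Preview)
Your proof is correct. The paper does not actually prove this proposition: it is quoted as \cite[Proposition 2.13]{GarciaSanchez_Rosales} and used as a known background result, with no argument given. Your write-up is the classical Sylvester--Frobenius argument and all the steps check out, including the bookkeeping you flag as the main obstacle: the representation $n = kb - ja$ with $0 \le k \le a-1$ is indeed unique (by coprimality), $n \notin S$ forces $j \ge 1$ since $j \le 0$ would give the nonnegative representation $n = (-j)a + kb$, and then $F(S) - n = (j-1)a + (a-1-k)b$ has both coefficients nonnegative. The parity check ensuring $F(S)$ is odd is also fine.
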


By Proposition \ref{Prop_count_small}, we can compute $r(S)$ by counting positive elements of $S$ of size at most $g(S)$.  There is a nice geometric way to count small elements in a numerical semigroup with embedding dimension $2$.  In order to explain this approach, we introduce some additional notation related to factorizations in numerical semigroups.

\begin{defn}
Let $S$ be a numerical semigroup with minimal generating set $\{n_1,\ldots, n_e\}$.  Consider the homomorphism $\varphi\colon \Z_{\ge 0}^e \rightarrow S$ defined by 
\[
\varphi(a_1, \ldots, a_e) = a_1 n_1 +\cdots + a_e n_e.
\]  
A \emph{factorization} of $n \in S$ is an element of $\varphi^{-1}(n)$.  The \emph{set of factorizations} of $n$ is denoted by $\mathcal{Z}(n)$. The \emph{smallest Betti element} of a numerical semigroup $S$ is the smallest element of $S$ for which $|\mathcal{Z}(n)| > 1$.  
\end{defn}
\noindent For a definition of the Betti elements of a numerical semigroup and a discussion of the role that Betti elements play in the study of minimal presentations of numerical semigroups, see \cite[Chapter 7]{GarciaSanchez_Rosales} and \cite{GSOR}.

\begin{lemma}\label{lem_large_Betti}
Suppose $S$ is a numerical semigroup in which the smallest Betti element of $S$ is larger than $g(S)$.  Then $r(S)$ is equal to the total number of factorizations of positive elements $n \in S$ where $n \le g(S)$.
\end{lemma}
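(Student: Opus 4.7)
The plan is to chain together Proposition \ref{Prop_count_small} with the defining property of the smallest Betti element. First, Proposition \ref{Prop_count_small} recasts $r(S)$ as the size of the set $S \cap \{1, 2, \ldots, g(S)\}$, so it suffices to show that this cardinality equals $\sum_{n \in S,\ 1 \le n \le g(S)} |\mathcal{Z}(n)|$.

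Next, I would invoke the definition of the smallest Betti element $\beta$ of $S$ as the smallest element of $S$ with $|\mathcal{Z}(n)| > 1$. For any positive $n \in S$ with $n < \beta$, we have $|\mathcal{Z}(n)| \ge 1$ because $n \in S$, and $|\mathcal{Z}(n)| \le 1$ by minimality of $\beta$; hence $|\mathcal{Z}(n)| = 1$. The hypothesis $\beta > g(S)$ then forces $|\mathcal{Z}(n)| = 1$ for every positive element $n \in S$ with $n \le g(S)$, meaning each such element is uniquely factorable.

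Combining these two observations gives $\sum_{n \in S,\ 1 \le n \le g(S)} |\mathcal{Z}(n)| = \#\{n \in S : 1 \le n \le g(S)\} = r(S)$, which is the desired equality. Interpreting the left-hand side as the total number of factorizations of positive elements of $S$ of size at most $g(S)$ finishes the argument.

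There is no substantive obstacle: the lemma is a bookkeeping translation reframing the enumeration of small nonzero semigroup elements as an enumeration of their factorizations, and it holds precisely because the uniqueness-of-factorization region reaches past the genus threshold. The payoff, presumably exploited in the subsequent sections on embedding dimension $2$ and supersymmetric semigroups, is that factorizations of small elements can be counted geometrically as lattice points in a region of $\Z_{\ge 0}^e$, which is often easier than directly enumerating the small elements of $S$ themselves.
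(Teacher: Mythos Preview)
Your proof is correct and follows essentially the same approach as the paper: invoke Proposition \ref{Prop_count_small} to rewrite $r(S)$ as $\#\{S \cap \{1,\ldots,g(S)\}\}$, then use that every element below the smallest Betti element has a unique factorization so this count coincides with the total number of factorizations. The paper's own proof is a one-sentence version of exactly this argument.
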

\begin{proof}
This follows directly from Proposition \ref{Prop_count_small} and the fact that every element of $S$ less than its smallest Betti element has a unique factorization in $S$.
\end{proof}

While it is generally not so clear how to count elements in a semigroup $S$ of size at most some bound $N$, there is a straightforward geometric approach to counting \emph{factorizations} of the elements in $S$ of size at most $N$.
\begin{lemma}\label{lem_int_points}
Let $S$ be a numerical semigroup with minimal generating set $\{n_1,\ldots, n_e\}$.  The total number of factorizations of elements $n \in S$ with $n \le N$ is equal to the number of integer points in the simplex with vertices
\[
\left(0,0,\ldots, 0\right), \left(\frac{N}{n_1},0,\ldots, 0 \right),\left(0,\frac{N}{n_2},0,\ldots, 0 \right),\ldots, \left(0,\ldots, 0,\frac{N}{n_e} \right).
\]
\end{lemma}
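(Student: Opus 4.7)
The plan is to set up a direct bijection between the factorizations being counted and the integer points in the simplex. By definition, a factorization of some $n \in S$ with $n \le N$ is a tuple $(a_1, \ldots, a_e) \in \Z_{\ge 0}^e$ with $\varphi(a_1,\ldots,a_e) = a_1 n_1 + \cdots + a_e n_e = n$. Summing over all such $n$, the total number of factorizations of elements of $S$ of size at most $N$ is precisely
\[
\#\bigl\{(a_1,\ldots,a_e) \in \Z_{\ge 0}^e \,:\, a_1 n_1 + \cdots + a_e n_e \le N\bigr\},
\]
since distinct factorizations of (possibly distinct) elements correspond to distinct tuples.

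Next, I would identify this counting set with the integer points in the stated simplex. The simplex $\Delta$ with the listed vertices is the convex hull of the origin together with the points $\frac{N}{n_i} \mathbf{e}_i$ for $i = 1,\ldots, e$. Since the $n_i$ are positive, $\Delta$ is exactly the set of $(x_1,\ldots,x_e) \in \R^e$ satisfying $x_i \ge 0$ for all $i$ and $n_1 x_1 + \cdots + n_e x_e \le N$; this is immediate by writing any point in the convex hull as a convex combination and reading off the defining inequalities, or by noting that these inequalities cut out a full-dimensional simplex with the prescribed vertices. Therefore $\Delta \cap \Z^e$ consists of precisely the tuples $(a_1,\ldots,a_e) \in \Z_{\ge 0}^e$ with $a_1 n_1 + \cdots + a_e n_e \le N$, which matches the count above.

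There is no real obstacle here; the only thing to be slightly careful about is verifying that the simplex $\Delta$ is genuinely described by the inequality $\sum n_i x_i \le N$ together with nonnegativity, which is a standard fact about the standard simplex rescaled coordinatewise. Combining the two displayed equalities completes the proof.
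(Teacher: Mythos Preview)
Your proof is correct and follows essentially the same approach as the paper: both identify factorizations of elements at most $N$ with tuples $(a_1,\ldots,a_e)\in\Z_{\ge 0}^e$ satisfying $\sum a_i n_i \le N$, and then observe that these are exactly the integer points in the right-angled simplex with the stated vertices.
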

\begin{proof}
There is a bijection between vectors $(a_1,\ldots, a_e) \in \Z_{\ge 0}^e$ and factorizations of elements in $S$.  A factorization $(a_1,\ldots, a_e)$ corresponds to an element of size at most $N$ if and only if $\varphi(a_1,\ldots, a_e) = a_1 n_1 + \cdots + a_e n_e \le N$.  This occurs if and only if $(a_1,\ldots, a_e)$ is an integer point in the right-angled simplex defined by the inequalities $x_1,\ldots, x_e \ge 0$ and $\sum\limits_{i=1}^e x_i n_i \le N$.  This simplex has vertices as given in the statement of the lemma.
\end{proof}

In the special case where $S$ has embedding dimension $2$, this leads directly to the following characterization of $r(S)$.
\begin{cor}\label{cor:count_int_triangle}
Let $S = \langle a,b \rangle$ where $2\le a < b$ and $\gcd(a,b) = 1$.  Write $g = \frac{ab-a-b+1}{2}$.  Then $r(S)$ is equal to one less than the number of integer points in the right triangle with vertices $(0,0),\left(\frac{g}{a},0\right), \left(0,\frac{g}{b}\right)$.
\end{cor}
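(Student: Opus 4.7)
The plan is to combine Lemma \ref{lem_large_Betti} and Lemma \ref{lem_int_points}, with the main preliminary step being to check that the hypothesis of Lemma \ref{lem_large_Betti} holds for any two-generator numerical semigroup $S = \langle a,b\rangle$.

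First I would identify the smallest Betti element of $S = \langle a,b\rangle$ when $\gcd(a,b)=1$. Under the factorization homomorphism $\varphi(x,y) = xa + yb$, the element $ab$ has two factorizations, namely $(b,0)$ and $(0,a)$, and any smaller element of $S$ has a unique factorization because the syzygy that identifies these two factorizations is the unique minimal relation between $a$ and $b$. Hence the smallest Betti element of $S$ is exactly $ab$. Using Proposition \ref{FG_2gen}, we have $g = g(S) = \tfrac{ab-a-b+1}{2} < ab$, so the smallest Betti element of $S$ is strictly larger than $g(S)$. Therefore Lemma \ref{lem_large_Betti} applies, and $r(S)$ equals the total number of factorizations of positive elements $n \in S$ with $n \le g$.

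Next I would apply Lemma \ref{lem_int_points} with $N = g$ and $e = 2$. That lemma gives a bijection between the factorizations of nonnegative elements of $S$ of size at most $g$ and the integer points $(a_1,a_2) \in \Z_{\ge 0}^2$ satisfying $a_1 a + a_2 b \le g$, which is precisely the right triangle $T$ with vertices $(0,0)$, $(g/a,0)$, and $(0,g/b)$. So the number of factorizations of all elements $n \in S$ with $0 \le n \le g$ equals $\#(T \cap \Z^2)$.

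Finally, to pass from factorizations of \emph{nonnegative} elements to factorizations of \emph{positive} elements, I simply remove the unique factorization of $n = 0$, namely the origin $(0,0)$. Combining this with the previous step yields $r(S) = \#(T \cap \Z^2) - 1$, as claimed. There is no real obstacle here beyond verifying the smallest Betti element, since once that is done the statement is a direct translation through the two lemmas already established in the section.
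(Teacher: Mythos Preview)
Your proposal is correct and follows essentially the same approach as the paper: identify $ab$ as the smallest Betti element, observe $g < ab$, and then apply Lemmas \ref{lem_large_Betti} and \ref{lem_int_points}, subtracting one for the origin.
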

\begin{proof}
The smallest element of $S$ with more than one factorization is $ab = b\cdot a + 0\cdot b = 0\cdot a + a\cdot b$.  Since $g < ab$, Lemmas \ref{lem_large_Betti} and \ref{lem_int_points} imply that $r(S) = \#\{S\cap \{1,2,\ldots, g\}\}$ is equal to one less than the number of $(x,y) \in \Z_{\ge 0}^2$ satisfying $ax + by \le g$.  We subtract one from the total number of integer points in this region to account for the point $(0,0)$.  The inequalities $x,y \ge 0$ and $ax + by \le g$ define a triangle with vertices $(0,0),\left(\frac{g}{a},0\right), \left(0,\frac{g}{b}\right)$.
\end{proof}

We see that computing the ordinarization number of $\langle a,b\rangle$ is equivalent to counting integer points in a right triangle whose vertices are rational functions in $a$ and $b$.   The following result is the basis of our approach to this counting problem.
\begin{prop}\label{2genO}
Let $S = \langle a,b\rangle$ where $2\le a < b$ and $\gcd(a,b) = 1$. Write $g = \frac{ab-a-b+1}{2}$.  Then
\[
r(S) =\left\lfloor\frac{g}{b}\right\rfloor +\sum\limits_{n=0}^{\lfloor\frac{g}{b}\rfloor}\left\lfloor \frac{g-nb}{a}\right\rfloor.
\]
\end{prop}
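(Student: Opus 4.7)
The plan is to apply Corollary \ref{cor:count_int_triangle} and then evaluate the integer-point count in the resulting right triangle by slicing it horizontally. First I would invoke the corollary to reduce the problem to showing that the number of points $(x,y) \in \Z_{\ge 0}^2$ with $ax + by \le g$ is exactly
\[
1 + \left\lfloor\frac{g}{b}\right\rfloor + \sum_{n=0}^{\lfloor g/b\rfloor}\left\lfloor\frac{g-nb}{a}\right\rfloor,
\]
since subtracting the point $(0,0)$ from this count yields the claimed formula for $r(S)$.

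Next I would partition the lattice points in the triangle according to the value $y = n$. For a fixed nonnegative integer $n$, a pair $(x, n)$ lies in the triangle precisely when $nb \le g$ and $0 \le x \le (g - nb)/a$. The first of these conditions is equivalent to $0 \le n \le \lfloor g/b\rfloor$, and for each such $n$ the number of admissible integer $x$ is $\lfloor (g-nb)/a\rfloor + 1$. Summing over $n$ from $0$ to $\lfloor g/b\rfloor$ gives
\[
\sum_{n=0}^{\lfloor g/b\rfloor}\left(\left\lfloor\frac{g-nb}{a}\right\rfloor + 1\right) = \left(\left\lfloor\frac{g}{b}\right\rfloor + 1\right) + \sum_{n=0}^{\lfloor g/b\rfloor}\left\lfloor\frac{g-nb}{a}\right\rfloor,
\]
and subtracting the single contribution of $(0,0)$ produces the formula in the statement.

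There is no real obstacle here, since once Corollary \ref{cor:count_int_triangle} reduces the problem to counting lattice points under the line $ax + by = g$, slicing by $y$ is a direct computation. The only minor check is that for $n \le \lfloor g/b\rfloor$ we have $g - nb \ge 0$, which makes $\lfloor (g-nb)/a\rfloor \ge 0$ and ensures that each summand contributes correctly to the count.
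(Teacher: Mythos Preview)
Your proposal is correct and matches the paper's own proof essentially line for line: the paper also invokes Corollary~\ref{cor:count_int_triangle} and then counts the lattice points in the triangle by slicing along horizontal lines $y=n$, obtaining $\left\lfloor\frac{g-nb}{a}\right\rfloor+1$ points on each line $y=n$ for $1\le n\le \lfloor g/b\rfloor$ and $\left\lfloor\frac{g}{a}\right\rfloor$ nonzero points on $y=0$.
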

\noindent Note that this is consistent with the result from Proposition \ref{Prop_BA_m2} which corresponds to the case where $a=2$.

\begin{proof}
We count the number of  integer points in the triangle with vertices $(0,0),\left(\frac{g}{a},0\right), \left(0,\frac{g}{b}\right)$ by counting the number on each fixed horizontal line.  On the line $y = 0$ the nonzero points $(1,0),(2,0),\ldots \left(\left\lfloor\frac{g}{a}\right\rfloor,0\right)$ correspond to the elements $a, 2a,\dots, \big\lfloor \frac{g}{a}\big\rfloor a$.  On the line $y=1$ we have the points $(0,1),(1,1),\ldots, \left(\left\lfloor\frac{g-b}{a}\right\rfloor,1\right)$ corresponding to the elements $b, a+b, 2a+b,\dots, \big\lfloor\frac{g-b}{a}\big\rfloor a+b$.  Continuing in this way, we see that for each $k \in \{1,2,\ldots, \left\lfloor\frac{g}{b}\right\rfloor\}$ there are $\left\lfloor \frac{g-kb}{a}\right\rfloor + 1$ integer points in this triangle on the line $y=k$.
\end{proof}

A careful analysis of the expression in Proposition \ref{2genO} leads to the following results.
\begin{thm}\label{thm:ord_e2}
Let $S = \langle a,b \rangle$ where $2\le a < b$ and $\gcd(a,b) = 1$.
\begin{enumerate}
\item Suppose $a$ is odd.  We have
\[
0 \le r(S) - \left( \frac{ab-a-4}{8} - \frac{b+3}{8a}\right) \le \frac{a}{4}-\frac{1}{4a}.
\]

\item Suppose $a$ is even.  We have
\[
0 \le r(S) - \left(\frac{ab-a-4}{8} \right)\le \frac{a}{4}.
\]
\end{enumerate}
\end{thm}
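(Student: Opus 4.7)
The plan is to reinterpret the formula from Proposition~\ref{2genO} by separating each floor into its integer and fractional parts, so that $r(S)$ becomes an explicit arithmetic sum minus a single sum of fractional parts; one then bounds the fractional-part sum by a combinatorial argument that exploits $\gcd(a,b)=1$.

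First I would evaluate $M := \lfloor g/b \rfloor$ explicitly. Rewriting $g/b = \frac{(a-1)(b-1)}{2b} = \frac{a-1}{2} - \frac{a-1}{2b}$ and using $0 < \frac{a-1}{2b} < 1$ (a consequence of $2 \le a < b$), one obtains $M = (a-3)/2$ when $a$ is odd and $M = (a-2)/2$ when $a$ is even. Next I would write
\[
r(S) = M + \sum_{n=0}^{M}\frac{g-nb}{a} - \Sigma, \qquad \Sigma := \sum_{n=0}^{M}\left\{\frac{g-nb}{a}\right\},
\]
where $\{x\}$ denotes the fractional part of $x$. The middle term is an arithmetic series summing to $\frac{(M+1)(2g-Mb)}{2a}$; after substituting $2g = ab-a-b+1$ and the appropriate value of $M$, a routine simplification yields
\[
M + \sum_{n=0}^{M}\frac{g-nb}{a} = \frac{ab-a-4}{8} - \frac{b+3}{8a} + \frac{(a-1)(3a-1)}{8a} \quad (a \text{ odd}),
\]
\[
M + \sum_{n=0}^{M}\frac{g-nb}{a} = \frac{ab-a-4}{8} + \frac{3a-2}{8} \quad (a \text{ even}).
\]

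The key step is controlling $\Sigma$. Since $\gcd(a,b) = 1$, the map $n \mapsto (g-nb) \bmod a$ is injective on any $a$ consecutive integers, so the residues $(g-nb) \bmod a$ for $n = 0, 1, \ldots, M$ are $M+1$ pairwise distinct elements of $\{0,1,\ldots,a-1\}$. Hence $\Sigma$ is a sum of $M+1$ distinct values from $\{0/a, 1/a, \ldots, (a-1)/a\}$ and therefore satisfies
\[
\frac{1}{a}\binom{M+1}{2} \le \Sigma \le \frac{1}{a}\Bigl((M+1)a - \binom{M+2}{2}\Bigr),
\]
with extremes attained when the selected residues are $\{0,1,\ldots,M\}$ or $\{a-1-M,\ldots,a-1\}$. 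Plugging in $M+1 = (a-1)/2$ in the odd case yields $\tfrac{(a-1)(a-3)}{8a} \le \Sigma \le \tfrac{(a-1)(3a-1)}{8a}$, while $M+1 = a/2$ in the even case yields $\tfrac{a-2}{8} \le \Sigma \le \tfrac{3a-2}{8}$.

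Subtracting these bounds on $\Sigma$ from the closed form of the main term produces exactly the two inequalities in the statement, with the odd-case slack $\tfrac{(a-1)(a+1)}{4a} = \tfrac{a}{4} - \tfrac{1}{4a}$ and the even-case slack $\tfrac{a}{4}$ arising as the difference between the upper and lower extremes of $\Sigma$. The main obstacle is the algebraic bookkeeping in simplifying the arithmetic-series expression and matching it to the target form; the substantive content is the distinct-residues observation, which is where the coprimality hypothesis is used essentially.
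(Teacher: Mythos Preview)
Your proposal is correct and follows essentially the same argument as the paper: compute $\lfloor g/b\rfloor$ explicitly, split the sum in Proposition~\ref{2genO} into an arithmetic series minus a sum of fractional parts, and bound the latter using that the residues $(g-nb)\bmod a$ are distinct because $\gcd(a,b)=1$. One small imprecision: for the even case you need $\frac{a-1}{2b}<\frac12$ (not merely $<1$) to conclude $M=\frac{a-2}{2}$, but this is immediate from $a<b$.
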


We also prove a result about the behavior of $r(\langle a,b\rangle)$ when $a$ is fixed and $b$ varies.
\begin{prop}\label{prop:ord_e2_quasi}
Let $a \ge 2$ be a fixed integer.  There is a linear quasipolynomial $Q_a(t)$ such that for all integers $b> a$ with $\gcd(a,b) = 1$, we have $r(\langle a,b\rangle) = Q_a(b)$.
\begin{itemize}
\item If $a$ is odd, then the period of $Q_a(t)$ divides $a$.
\item If $a$ is even, then the period of $Q_a(t)$ divides $2a$.
\end{itemize}
\end{prop}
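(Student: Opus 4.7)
The plan is to exploit the explicit formula from Proposition \ref{2genO}. Writing $g = g(\langle a,b\rangle) = (a-1)(b-1)/2$ and $M := \lfloor g/b \rfloor$, that formula gives
\[
r(\langle a,b\rangle) = M + \sum_{n=0}^{M} \left\lfloor \frac{g - nb}{a} \right\rfloor.
\]

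First I would show that $M$ is constant once $a$ is fixed and $b > a$. From
\[
\frac{g}{b} = \frac{a-1}{2} - \frac{a-1}{2b}
\]
and $b \ge a+1$, the subtracted term lies strictly in $(0, 1/2)$. When $a$ is odd, $(a-1)/2 \in \Z$, and so $M = (a-3)/2$; when $a$ is even, $(a-1)/2$ is a half-integer, and so $M = (a-2)/2$. In either case $M$ depends only on $a$.

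Next, I would apply the identity $\lfloor x \rfloor = x - (x \bmod 1)$ to rewrite
\[
\sum_{n=0}^{M} \left\lfloor \frac{g - nb}{a} \right\rfloor = \frac{1}{a}\left( (M+1)\, g - \frac{M(M+1)}{2}\, b \right) - \frac{1}{a} \sum_{n=0}^{M} \bigl( (g - nb) \bmod a \bigr).
\]
Because $g$ is linear in $b$ and $M$ is constant in $b$, the first summand on the right is a linear polynomial in $b$ with rational coefficients. It then remains to show that $P(b) := \sum_{n=0}^{M} \bigl( (g - nb) \bmod a \bigr)$ is a periodic function of $b$ with the claimed period.

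For this last step I would analyze $(g - nb) \bmod a$. If $a$ is odd, then $2$ is invertible modulo $a$, and the relation $2g \equiv 1 - b \pmod a$ shows that $g \bmod a$ is determined by $b \bmod a$; hence each summand of $P(b)$, and thus $P(b)$ itself, depends only on $b \bmod a$, giving period dividing $a$. If $a$ is even, then $\gcd(a,b) = 1$ forces $b$ to be odd, so $(b-1)/2$ is an integer whose residue mod $a$ depends on $b \bmod 2a$; since $g = (a-1)\cdot(b-1)/2$, this determines $g \bmod a$ as a function of $b \bmod 2a$, so $P(b)$ has period dividing $2a$. Combining this periodic correction with the linear piece above produces the desired linear quasipolynomial $Q_a$.

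The main obstacle is the even case: because $2$ is not invertible modulo an even $a$, one must track $b$ modulo $2a$ rather than $a$, and verify carefully that the linear and periodic pieces recombine on each residue class mod $2a$ to a single rational-coefficient linear formula whose value is always the integer $r(\langle a,b\rangle)$.
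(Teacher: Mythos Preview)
Your proposal is correct and follows essentially the same route as the paper: both start from the formula of Proposition~\ref{2genO}, show that $M=\lfloor g/b\rfloor$ depends only on $a$ via the same parity computation, split each floor into its rational part minus its fractional part, and then argue that the sum of fractional parts depends only on $b\bmod a$ (odd $a$) or $b\bmod 2a$ (even $a$) using exactly the observation that $g\bmod a$ is determined by $(b-1)/2\bmod a$. The paper simply imports the linear-plus-fractional decomposition from the proof of Theorem~\ref{thm:ord_e2} rather than rewriting it, but the content is identical.
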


Analyzing the expression in Proposition \ref{2genO} leads to a way to compute these quasipolynomial formulas for fixed values of $a$.  We have computed many of these formulas and list the first few of them here.
\begin{example}
In all of the formulas below, whenever we write $\langle a,b\rangle$ we require that $b>a$.
\begin{itemize}
\item We have 
\[
r(\langle 2,b\rangle) = \begin{cases}
\frac{1}{4} (b-1) & \text{ if } b \equiv 1 \pmod{4} \\
\frac{1}{4} (b-3) & \text{ if } b \equiv 3 \pmod{4}
\end{cases}.
\]

\item We have 
\[
r(\langle 3,b\rangle) = \begin{cases}
\frac{1}{3} (b-1) & \text{ if } b \equiv 1 \pmod{3} \\
\frac{1}{3} (b-2) & \text{ if } b \equiv 2 \pmod{3}
\end{cases}.
\]

\item For any odd integer $b > 4$, we have 
\[
r(\langle 4,b\rangle) = \frac{1}{2} (b-1).
\]

\item We have 
\[
r(\langle 5,b\rangle) = \begin{cases}
\frac{3}{5} (b-1) & \text{ if } b \equiv 1 \pmod{5} \\
\frac{3}{5} \left(b-\frac{1}{3}\right) & \text{ if } b \equiv 2 \pmod{5} \\
\frac{3}{5} \left(b-\frac{4}{3}\right)  & \text{ if } b \equiv 3 \pmod{5} \\
\frac{3}{5} \left(b-\frac{2}{3}\right) & \text{ if } b \equiv 4 \pmod{5} 
\end{cases}.
\]

\item We have 
\[
r(\langle 6,b\rangle) = \begin{cases}
\frac{3}{4} (b-1) & \text{ if } b \equiv 1 \pmod{12} \\
\frac{3}{4}  (b-1) & \text{ if } b \equiv 5 \pmod{12} \\
\frac{3}{4} \left(b-\frac{1}{3}\right) & \text{ if } b \equiv 7 \pmod{12} \\
\frac{3}{4} \left(b-\frac{1}{3}\right) & \text{ if } b \equiv 11 \pmod{12} 
\end{cases}.
\]

\end{itemize}
\end{example}

We now give the proof of Theorem \ref{thm:ord_e2}.  In this argument we work extensively with the \emph{fractional-part function} $\{x\} = x - \lfloor x \rfloor$.
\begin{proof}[Proof of Theorem \ref{thm:ord_e2}]
To simplify the notation, we write $g = \frac{ab-a-b+1}{2} = \frac{(a-1)(b-1)}{2}$.

We note that 
\[
\frac{g}{b} = \frac{(a-1)(b-1)}{2b} = \frac{a-1}{2}-\frac{a-1}{2b}.
\]
Since $1 \le a-1 < b$, we see that 
\[
0 < \frac{a-1}{2b} < \frac{1}{2}.
\]
This implies 
\begin{equation}\label{eqn:floor}
\left\lfloor \frac{g}{b} \right\rfloor = \begin{cases} \frac{a-3}{2} & \text{ if } a \text{ is odd} \\
\frac{a-2}{2} & \text{ if } a \text{ is even} 
\end{cases}.
\end{equation}

In order to compute $r(S)$, we need to compute the sum in Proposition \ref{2genO}.  We have
\begin{equation}\label{eqn:sums}
\sum_{n=0}^{\lfloor \frac{g}{b}\rfloor} \left\lfloor \frac{g-nb}{a} \right\rfloor = \sum_{n=0}^{\lfloor \frac{g}{b}\rfloor} \left(\frac{g-nb}{a} \right) - \sum_{n=0}^{\lfloor\frac{g}{b}\rfloor} \left\{\frac{g-nb}{a} \right\}.
\end{equation}
We consider each of these two sums separately.  Because of the difference in the expression for $\lfloor \frac{g}{b} \rfloor$ it is helpful to divide our analysis into cases based on whether $a$ is even or odd.

Suppose that $a$ is odd.  We have
\begin{align}
\sum_{n=0}^{\lfloor \frac{g}{b}\rfloor} \left(\frac{g-nb}{a} \right) & =  \sum_{n=0}^{ \frac{a-3}{2}} \left(\frac{g-nb}{a} \right) = \sum_{n=0}^{ \frac{a-3}{2}} \frac{g}{a} - \sum_{n=0}^{ \frac{a-3}{2}} \frac{nb}{a}  =  \frac{g}{a}\left( \frac{a-1}{2}\right) - \frac{b}{a} \sum_{n=0}^{ \frac{a-3}{2}} n \nonumber\\
&  =   \frac{(a-1)(b-1)}{2a} \left( \frac{a-1}{2} \right)- \frac{b}{a} \frac{\left(\frac{a-3}{2}\right) \left(\frac{a-1}{2}\right)}{2} \nonumber \\
& =  \frac{(ab - 2a + b + 2)(a - 1)}{8a}. \label{eqn:main_term_odd}
\end{align}
A similar analysis when $a$ is even shows that 
\begin{equation}\label{eqn:main_term_even}
\sum_{n=0}^{\lfloor \frac{g}{b}\rfloor} \left(\frac{g-nb}{a} \right) = \frac{ab - 2a + 2}{8}.
\end{equation}

We now turn to the next expression in the formula for $r(S)$,
\[
\sum_{n=0}^{\lfloor\frac{g}{b}\rfloor} \left\{\frac{g-nb}{a} \right\}.
\]

We first suppose that $a$ is odd. For any integer $n$, we see that $\left\{\frac{g-nb}{a}\right\}$ is a rational number equal to one of $\frac{0}{a},\frac{1}{a},\ldots, \frac{a-1}{a}$. The particular value of $\left\{\frac{g-nb}{a}\right\}$ is determined by $g-nb \pmod{a}$.

Consider the $a$ integers given by $g-nb$ where $0 \le n \le a-1$.  These values of $nb$ are distinct modulo $a$. Therefore, the integers $g-nb$ are distinct modulo $a$ as well.  We see that the $\frac{a-1}{2}$ values of $\left\{\frac{g-nb}{a}\right\}$ when $0 \le n \le \frac{a-3}{2}$ give $\frac{a-1}{2}$ distinct values in the set $\left\{\frac{0}{a},\frac{1}{a},\ldots, \frac{a-1}{a}\right\}$.  The smallest that the sum of these terms can be is 
\[
\frac{0}{a} + \frac{1}{a} + \cdots + \frac{\left(\frac{a-3}{2}\right)}{a} = \frac{(a-3)(a-1)}{8a}.
\]
The largest that the sum of these terms can be is 
\[
\frac{a-1}{a} + \frac{a-2}{a} + \cdots + \frac{a-\left(\frac{a-1}{2}\right)}{a} = 
\frac{\frac{a+1}{2}}{a}+ \cdots + \frac{a-1}{a}.
\]
This is the same as the previous sum except that $\frac{a+1}{2a} = \frac{1}{2} + \frac{1}{2a}$ has been added to each term.  Therefore,  the largest this sum can be is 
\[
\frac{(a-3)(a-1)}{8a} + \left(\frac{a+1}{2a}\right) \left( \frac{a-1}{2}\right) = \frac{(3a-1)(a-1)}{8a}.
\]
Combining these observations shows that 
\begin{equation}\label{eqn:fractional_odd}
 \frac{(a-3)(a-1)}{8a} \le \sum_{n=0}^{\lfloor\frac{g}{b}\rfloor} \left\{\frac{g-nb}{a} \right\} \le  \frac{(a-3)(a-1)}{8a} + \frac{a^2-1}{4a}.
\end{equation}
A similar analysis when $a$ is even shows that
\begin{equation}\label{eqn:fractional_even}
\frac{a-2}{8} \le \sum_{n=0}^{\lfloor\frac{g}{b}\rfloor} \left\{\frac{g-nb}{a} \right\} \le \frac{a-2}{8} + \frac{a}{4}.
\end{equation}
Combining the results of equations \eqref{eqn:floor}, \eqref{eqn:sums}, \eqref{eqn:main_term_odd}, \eqref{eqn:main_term_even}, \eqref{eqn:fractional_odd}, and \eqref{eqn:fractional_even} completes the proof.
\end{proof}

We now give the proof of Proposition \ref{prop:ord_e2_quasi}.
\begin{proof}[Proof of Proposition \ref{prop:ord_e2_quasi}]
First suppose that $a$ is odd.  In the proof of Theorem \ref{thm:ord_e2} we saw that 
\[
r(S) = \frac{a-3}{2} +  \frac{(ab - 2a + b + 2)(a - 1)}{8a} + 
\sum_{n=0}^{ \frac{a-3}{2}} \left\{\frac{g-nb}{a} \right\}.
\]
Therefore, we need only show that there is a quasipolynomial $Q_a'(t)$ of degree at most $1$ with period dividing $a$ such that for any $b>a$ with $\gcd(a,b) = 1$ we have
\[
\sum_{n=0}^{ \frac{a-3}{2}} \left\{\frac{g-nb}{a} \right\} = Q_a'(b).
\]

The sum in this expression depends on the values of $\left\{\frac{g-nb}{a} \right\}$ as $n$ ranges from $0$ to $\frac{a-3}{2}$.  This set of values depends only on the residue classes modulo $a$ represented by the integers $g-nb = \frac{a-1}{2} (b-1)- nb$.  More precisely, for each $n$ satisfying $0 \le n \le \frac{a-3}{2}$ let $r_n$ be the integer satisfying $0 \le r_n \le a-1$ and $r_n \equiv g-nb \pmod{a}$.  It is clear that 
\begin{equation}\label{eqn:sum_residues}
\sum_{n=0}^{ \frac{a-3}{2}} \left\{\frac{g-nb}{a} \right\} = \frac{r_0}{a} + \frac{r_1}{a} + \cdots + \frac{r_{\frac{a-3}{2}}}{a}.
\end{equation}
Each $r_n$ depends only on the residue class of $b$ modulo $a$, so for each residue class of $b$ modulo $a$ for which $\gcd(a,b) = 1$, the expression in \eqref{eqn:sum_residues} is fixed.  This completes the proof in this case.

Now suppose that $a$ is even.  Since we only consider integers $b$ with $b > a$ and $\gcd(a,b) = 1$, we may suppose that $b$ is odd.  We note that $g = \frac{(a-1)(b-1)}{2}$ modulo $a$ depends on the residue class of the integer $\frac{b-1}{2}$ modulo $a$, which depends on $b$ modulo $2a$.  The rest of the argument is very similar to the case where $a$ is odd.  We omit the details.
\end{proof}

We briefly address some other approaches to studying ordinarization numbers of numerical semigroups with embedding dimension $2$.
\begin{remark}
\begin{enumerate}[leftmargin=*]
\item The first potential approach also starts from Corollary \ref{cor:count_int_triangle}.  Pick's theorem gives an expression for counting integer points in lattice polygons.  The issue here is that the vertices of the right triangle in which we need to count points have \emph{rational} coordinates that are not integers.  One can apply Pick's theorem to get upper and lower bounds for the number of integer points in this triangle.  For example, it is clear that the triangle with vertices $(0,0), \left(\frac{g}{a},0\right), \left(0,\frac{g}{b}\right)$ is contained within the triangle with integer vertices $(0,0), \left(\left\lceil\frac{g}{a}\right\rceil,0\right), \left(0,\left\lceil\frac{g}{b}\right\rceil\right)$.  We pursued this approach but it did not seem to lead to estimates stronger than the ones given in Theorem \ref{thm:ord_e2}.

A more precise approach to this question comes from using the literature of integer points in rational polygons; see \cite[Section 2.7]{BeckRobins}.  Theorem 2.10 in \cite{BeckRobins} gives a formula for the number of integer points in a triangle with rational vertices in terms of certain Fourier-Dedekind sums.  It seems likely that one could use the reciprocity law of Dedekind for these sums \cite[Theorem 4]{BeckRobinsZacks} to obtain sharper estimates in many cases.  For an example of how this might work, see \cite[Section 4]{BeckRobinsZacks}.  We leave this as a direction for future exploration.

In \cite[Section 2.7]{BeckRobins} the authors explain how these formulas for counting integer points in a rational triangle lead to the result that when $P$ is a rational polygon, $i(P,n)$ is a quasipolynomial \cite[Theorem 2.11]{BeckRobins}.  This is closely related to Proposition \ref{prop:ord_e2_quasi}, but not exactly the same, since the triangles relevant to that proposition are not exactly dilates of a single fixed triangle.

\item The argument given in this section does not make use of previous results about counting factorizations of elements in $\langle a,b\rangle$.  Beck and Robins discuss the \emph{Barlow-Popoviciu formula} in \cite[Section 1.3]{BeckRobins}.  This result suggests a potential approach to computing $r(\langle a,b\rangle)$.
\begin{thm}[Barlow-Popoviciu formula]
Suppose $2\le a < b$ satisfy $\gcd(a,b) = 1$. Let $n$ be a nonnegative integer.  The number of factorizations of $n$ in $\langle a,b\rangle$ is 
\[
\frac{n}{ab} - \left\{\frac{b^{-1} n}{a} \right\} - \left\{\frac{a^{-1} n}{b} \right\} + 1,
\]
where $b^{-1} b \equiv 1 \pmod{a}$ and $a^{-1} a \equiv 1 \pmod{b}$.
\end{thm}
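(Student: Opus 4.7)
The plan would be to enumerate non-negative integer solutions $(x, y)$ to $ax + by = n$ directly, since factorizations of $n$ in $\langle a, b\rangle$ are precisely such pairs. I would begin by selecting a canonical integer solution as follows: let $y^* \in \{0, 1, \ldots, a-1\}$ be the unique representative with $by^* \equiv n \pmod{a}$. Since $y^* \equiv b^{-1} n \pmod{a}$, we have $y^* = a \{b^{-1} n / a\}$, and setting $x^* = (n - by^*)/a \in \Z$ gives a particular (possibly negative) integer solution. Every integer solution then has the form $(x, y) = (x^* - bt,\ y^* + at)$ for some $t \in \Z$.

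Next I would count the $t$ that produce $(x, y) \in \Z_{\ge 0}^2$. The condition $y^* + at \ge 0$ forces $t \ge 0$ (because $y^* \in [0, a)$), and $x^* - bt \ge 0$ forces $t \le x^*/b$. Hence the count equals $\lfloor x^*/b \rfloor + 1$ when $x^* \ge 0$, and $0$ when $x^* < 0$. Since $n \ge 0$ and $\{b^{-1} n / a\} < 1$, we always have $x^*/b = n/(ab) - \{b^{-1} n / a\} > -1$, so in the case $x^* < 0$ the floor $\lfloor x^*/b\rfloor$ equals $-1$ and the unified formula $\lfloor x^*/b \rfloor + 1$ gives the correct count $0$. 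Thus the number of factorizations of $n$ equals $\lfloor n/(ab) - \{b^{-1} n/a\} \rfloor + 1$, including the case $n \notin \langle a, b \rangle$.

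The remaining step is to replace the floor function by the claimed fractional-part expression. Here I would invoke the identity $a \cdot a^{-1} + b \cdot b^{-1} = ab + 1$, valid under the standard conventions $a^{-1} \in \{1, \ldots, b-1\}$ and $b^{-1} \in \{1, \ldots, a-1\}$ (both sides agree modulo $a$ and modulo $b$, and the size bounds force equality). Dividing by $ab$ and multiplying by $n$ yields
\[
\frac{b^{-1} n}{a} + \frac{a^{-1} n}{b} = n + \frac{n}{ab},
\]
so $n/(ab) - b^{-1} n / a = a^{-1} n / b - n$. Taking fractional parts (using that $n$ is an integer) gives $\{n/(ab) - \{b^{-1} n / a\}\} = \{a^{-1} n / b\}$, and the identity $\lfloor x \rfloor = x - \{x\}$ then produces the stated formula. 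The main obstacle is the bookkeeping around the fractional-part function, which is not additive: everything hinges on the modular identity $a a^{-1} + b b^{-1} = ab + 1$, which must be verified with care since shifting $a^{-1}$ by a multiple of $b$ or $b^{-1}$ by a multiple of $a$ leaves the fractional parts $\{a^{-1} n/b\}$ and $\{b^{-1} n/a\}$ invariant but alters the intermediate algebraic identity, so the range conventions for the inverses are essential.
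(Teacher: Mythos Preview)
Your argument is correct. Note, however, that the paper does not supply its own proof of this statement: the Barlow--Popoviciu formula appears inside a remark and is simply quoted from Beck and Robins \cite[Section~1.3]{BeckRobins} as a known tool. So there is no ``paper's proof'' to compare against directly.

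That said, your approach differs from the one in the cited source. Beck and Robins derive the formula via the partial-fraction expansion of the generating function $\frac{1}{(1-z^a)(1-z^b)}$ for the restricted partition function, extracting the coefficient of $z^n$ and simplifying the resulting roots-of-unity sums into fractional parts. Your route is instead a direct lattice-point count: parametrize all integer solutions of $ax+by=n$ from a canonical base solution with $y^*\in\{0,\dots,a-1\}$, count the parameter values $t$ yielding nonnegative coordinates, and then invoke the arithmetic identity $aa^{-1}+bb^{-1}=ab+1$ to rewrite the floor as the claimed difference of fractional parts. Both arguments are short; yours is more elementary (no generating functions or complex roots of unity) and makes the role of the modular inverses completely transparent, while the generating-function proof generalizes more readily to three or more generators, which is the context in which Beck and Robins present it.

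Your handling of the edge case $x^*<0$ and of the fractional-part bookkeeping is careful and correct; the key identity $aa^{-1}+bb^{-1}=ab+1$ is verified exactly as you describe, using CRT and the size bounds on the chosen representatives.
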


If $n < ab$, then it is clear that this number of factorizations is either $0$ or $1$.  Therefore, 
\[
r(\langle a,b \rangle) = \sum_{n=1}^{\frac{(a-1)(b-1)}{2}} \left(\frac{n}{ab} - \left\{\frac{b^{-1} n}{a} \right\} - \left\{\frac{a^{-1} n}{b} \right\} + 1\right).
\]
We can analyze this sum using a strategy similar to the one given in the proof of Theorem \ref{thm:ord_e2}.  This approach does not seem to lead to better results than the ones we obtained above.

Proposition \ref{2genO} divides up the integer points in the triangle described in Corollary \ref{cor:count_int_triangle} by considering points on each horizontal line.  The Barlow-Popoviciu formula gives a way to divide up the integer points in this triangle by considering points on the parallel lines $ax+by = n$ as $n$ varies.
\end{enumerate}

\end{remark}

\section{Ordinarization numbers of supersymmetric numerical semigroups}\label{sec:ord_super}

The goal of this section is to try to adapt our results on numerical semigroups with embedding dimension $2$ to a more general class of numerical semigroups.  We begin with some definitions.
\begin{defn}
A numerical semigroup $S$ is \emph{symmetric} if for any nonnegative integer $x$ we have $x \in S$ if and only if $F(S) - x \not\in S$.  
\end{defn}
\noindent It is not difficult to see that $S$ is symmetric if and only if  $g(S) = \frac{F(S)+1}{2}$ \cite[Corollary 4.5]{GarciaSanchez_Rosales}.

\begin{defn}
Let $2\le a_1 < a_2 < \ldots < a_n$ be pairwise relatively prime integers. For each $i \in \{1,2,\ldots, n\}$, define $q_i = \frac{a_1 \cdots a_n}{a_i}$.  A numerical semigroup $S$ is \emph{supersymmetric} if and only if $S = \langle q_1,\ldots, q_n\rangle$ for some choice of $a_1,\ldots, a_n$.  These semigroups are sometimes called \emph{strongly flat}, see for example \cite{StronglyFlat}.  
\end{defn}
\noindent Note that every numerical semigroup of embedding dimension $2$ is supersymmetric.

The Frobenius number of  a supersymmetric numerical semigroup is easy to describe in terms of the integers $a_1,\ldots, a_n$.  Throughout the rest of this section, when we have a collection $2\le a_1 < a_2 < \ldots < a_n$ of pairwise relatively prime integers, for each $i \in \{1,2,\ldots, n\}$ we define $q_i = \frac{a_1 \cdots a_n}{a_i}$, and we refer to $S = \langle q_1,\ldots, q_n\rangle$ as the \emph{supersymmetric numerical semigroup corresponding to $a_1,\ldots, a_n$}. 
\begin{prop}\cite{StronglyFlat}\label{Prop_genus_super}
Let $2\le a_1 < a_2 < \ldots < a_n$ be pairwise relatively prime integers and let $S$ be the supersymmetric numerical semigroup corresponding to $a_1,\ldots, a_n$.  Then
\begin{eqnarray*}
F(S) & = & (n-1)\prod\limits_{i=1}^{n}a_i-\sum\limits_{i=1}^n q_i, \\
g(S) & = & \frac{1}{2}\left(1+(n-1)\prod\limits_{i=1}^{n}a_i-\sum\limits_{i=1}^n q_i\right).
\end{eqnarray*}
\end{prop}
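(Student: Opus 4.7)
My plan is to compute a normal-form criterion for membership in $S$ using the Chinese remainder theorem, read off $F(S)$ from it, and then show $S$ is symmetric to obtain $g(S)=\tfrac{F(S)+1}{2}$. Write $P=\prod_{i=1}^n a_i$, so that $q_i=P/a_i$ and the uniform identity $a_iq_i=P$ holds for every $i$. Since the $a_i$ are pairwise coprime, $a_j\mid q_i$ whenever $i\neq j$, while $\gcd(a_j,q_j)=1$. Hence if $m=\sum_{i=1}^n c_i q_i$ with $c_i\in\Z_{\ge 0}$, reducing mod $a_j$ gives $m\equiv c_jq_j\pmod{a_j}$, and this pins $c_j$ down to a unique residue class mod $a_j$. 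Let $t_j(m)\in\{0,1,\ldots,a_j-1\}$ denote the unique integer with $q_jt_j(m)\equiv m\pmod{a_j}$.

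I would then prove that $m\in S$ if and only if $m\ge\sum_{i=1}^n t_i(m)q_i$. For the forward direction, every valid $c_i$ satisfies $c_i\equiv t_i(m)\pmod{a_i}$ and $c_i\ge 0$, so $c_i\ge t_i(m)$. Conversely, if $m\ge\sum t_i(m)q_i$, then $m-\sum t_i(m)q_i$ has residue $0$ modulo every $a_i$, hence is a nonnegative multiple of $P$ by CRT; one realizes such an expression by taking $c_j=t_j(m)$ for $j\ge 2$ and absorbing the surplus into $c_1$ using $a_1q_1=P$.

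The Frobenius number now falls out. Within the residue class modulo $P$ prescribed by $(t_1,\ldots,t_n)$, the gaps of $S$ are exactly $\sum t_iq_i-kP$ for $k\ge 1$, so the largest gap overall is $\max_{(t_1,\ldots,t_n)}\sum t_iq_i-P$ taken over $0\le t_i\le a_i-1$. Pairwise coprimality and CRT guarantee that every such tuple is realized by some integer $m$, and the maximum is visibly $\sum_{i=1}^n(a_i-1)q_i=nP-\sum_{i=1}^n q_i$ (achieved by any $m$ with $m\equiv -q_j\pmod{a_j}$ for all $j$). Therefore $F(S)=(n-1)P-\sum_{i=1}^n q_i$, matching the stated formula.

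To obtain the genus I would show $S$ is symmetric and invoke $g(S)=\tfrac{F(S)+1}{2}$. The key identity is $F(S)\equiv -q_j\pmod{a_j}$, from which a short calculation gives $t_j(F(S)-x)=a_j-1-t_j(x)$, and hence $\sum t_i(F(S)-x)q_i=F(S)+P-\sum t_i(x)q_i$. The condition $F(S)-x\notin S$ then becomes $x>\sum t_i(x)q_i-P$; since $x$ and $\sum t_i(x)q_i$ differ by an integer multiple of $P$, this strict inequality is equivalent to $x\ge\sum t_i(x)q_i$, i.e.\ to $x\in S$. I do not foresee a genuine obstacle: the entire argument is residue-class bookkeeping made transparent by the uniform identity $a_iq_i=P$, and the only mildly delicate point is the CRT-realizability of every residue tuple, which is handled directly by the pairwise coprimality of the $a_i$.
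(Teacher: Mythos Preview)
Your argument is correct. The membership criterion via CRT is sound: since $a_j\mid q_i$ for $i\neq j$ and $\gcd(a_j,q_j)=1$, the residue of $m$ mod $a_j$ pins down $c_j\bmod a_j$, and the minimal nonnegative representative $t_j(m)$ gives the unique ``reduced'' factorization $\sum t_i(m)q_i$ in the residue class of $m$ mod $P$. The Frobenius computation and the symmetry step both check out; in particular the identity $t_j(F(S)-x)=a_j-1-t_j(x)$ is exactly what makes the equivalence $x\in S\iff F(S)-x\notin S$ work, and your observation that $x-\sum t_i(x)q_i\in P\Z$ converts the strict inequality cleanly.

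As for comparison: the paper does not prove this proposition at all --- it is quoted from \cite{StronglyFlat} and used as a black box. Your proposal thus supplies a self-contained proof where the paper offers none. The approach you take (a normal-form/Ap\'ery-type description of $S$ via CRT, followed by symmetry to get the genus) is the natural one and is essentially the standard route for this family; it also dovetails nicely with the paper's later Proposition~\ref{Prop_count_factorizations}, since your reduced tuple $(t_1(m),\ldots,t_n(m))$ is precisely the unique factorization with all coordinates below the $a_i$, and the ``surplus'' $\sum\lfloor c_i/a_i\rfloor$ copies of $P$ is what that proposition counts.
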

\noindent Note that for $n=2$ this is consistent with the formulas of Proposition \ref{FG_2gen}.  This proposition also makes it clear that supersymmetric numerical semigroups are symmetric.

Supersymmetric numerical semigroups are distinguished by the fact that they have a unique Betti element, which is $A = a_1 a_2\cdots a_n$. In particular, we have a nice formula for the smallest Betti element \cite[Example 12]{GSOR}.  This means that the only way to go between factorizations of an element in a supersymmetric semigroup is to make a sequence of trades where you replace $a_i$ copies of the generator $q_i$ with  $a_j$ copies of the generator $q_j$.  This can be made more precise with the language of minimal presentations \cite[Chapter 7]{GarciaSanchez_Rosales}.

Before discussing approximations for ordinarization numbers for supersymmetric numerical semigroups of embedding dimension larger than $2$, we briefly return to $\langle a,b \rangle$.  Consider an infinite sequence of pairs of relatively prime positive integers $(a_1,b_1), (a_2,b_2),\ldots$, where $2\le a_i < b_i$ for each $i$ and also $a_1 < a_2 < \cdots$.  The upper and lower bounds given in Theorem \ref{thm:ord_e2} imply that 
\[
\lim_{i\rightarrow \infty} \frac{r(\langle a_i, b_i \rangle)}{g(\langle a_i, b_i \rangle)} = \frac{1}{4}.
\]

We prove an analogous result for supersymmetric numerical semigroups with embedding dimension $3$.
\begin{prop}
Consider an infinite sequence of pairwise relatively prime triples of positive integers $(a_1,b_1,c_1), (a_2,b_2,c_2),\ldots$, where $2\le a_i < b_i<c_i$ for each $i$ and $a_1 < a_2 < \cdots$. Then 
\[
\lim_{i \rightarrow \infty} \frac{r(\langle a_i, b_i, c_i\rangle)}{g(\langle a_i, b_i, c_i\rangle)} = \frac{1}{6}.
\]
\end{prop}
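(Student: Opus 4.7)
The plan is to follow the strategy of Section~\ref{sec:ord_e2}: reduce $r(S)$ to counting lattice points in a rational simplex, then extract the leading volume term. Write the $i$-th triple as $(a,b,c)$ with $2 \le a < b < c$ pairwise coprime, so that $S = \langle bc, ac, ab\rangle$ has Betti element $A = abc$ and, by Proposition~\ref{Prop_genus_super}, genus $g = \tfrac{1}{2}(1+2abc-(ab+ac+bc))$, which satisfies $g < A$ (since $ab+ac+bc > 1$). Hence Lemmas~\ref{lem_large_Betti} and~\ref{lem_int_points} apply and give $r(S) = |T \cap \Z^3| - 1$, where $T \subset \R_{\ge 0}^3$ is the tetrahedron cut out by $(bc) x_1 + (ac) x_2 + (ab) x_3 \le g$.

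Next I would estimate $|T \cap \Z^3|$ by slicing. For each integer $k$ with $0 \le k \le \lfloor g/(ab) \rfloor$, the slice $x_3 = k$ is a right triangle with legs $(g-kab)/(bc)$ and $(g-kab)/(ac)$. A standard lattice count in such a right triangle (area plus error of order the perimeter, in the spirit of Proposition~\ref{2genO}) together with summation over $k$ yields
\[
|T \cap \Z^3| = \frac{g^3}{6 (abc)^2} + O\!\left(\frac{g^2}{(ab)(bc)} + \frac{g^2}{(ab)(ac)}\right).
\]
Since $g = O(A)$, the two error terms simplify to $O(ac) + O(bc) = O(bc)$.

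Finally I would unpack the asymptotics. From $ab + ac + bc \le 3bc$ and $A = abc$ it follows that $g = A + O(bc)$, and hence $g^3/(6A^2) = A/6 + O(bc)$ as well. Therefore
\[
\frac{r(S)}{g(S)} = \frac{A/6 + O(bc)}{A + O(bc)} = \frac{1/6 + O(1/a)}{1 + O(1/a)} \longrightarrow \frac{1}{6},
\]
as $a = a_i \to \infty$, which is forced by the hypothesis $a_1 < a_2 < \cdots$.

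The main obstacle is verifying that the cumulative lattice-point error really is $O(bc)$ rather than larger. The key accounting is that in each slice the perimeter contribution is $O\!\bigl(g/(bc) + g/(ac)\bigr) = O(a+b)$, and there are $O(c)$ slices, giving total error $O\bigl((a+b)c\bigr) = O(bc)$; this is comfortably absorbed by the main term $A/6 = abc/6$ once $a$ is large. A more refined analysis (for instance, via the Barlow-Popoviciu-type reciprocity formulas alluded to at the end of Section~\ref{sec:ord_e2}) would only improve the implicit constants and not alter the limiting ratio $\tfrac{1}{6}$.
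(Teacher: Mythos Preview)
Your argument is correct and shares the paper's overall architecture: reduce $r(S)$ via Lemmas~\ref{lem_large_Betti} and~\ref{lem_int_points} to a lattice-point count in the rational tetrahedron $T$, show this count is asymptotic to the volume $g^3/(6A^2)\sim A/6$, and then divide by $g\sim A$. The execution of the middle step differs. The paper sandwiches $T$ between two integer-vertex tetrahedra (legs $a,b,c$ above, and $\lfloor g/(bc)\rfloor,\lfloor g/(ac)\rfloor,\lfloor g/(ab)\rfloor$ below) and invokes the Xu--Yau formula \cite{XuYau} for an exact count in each, leaving the final squeeze as an ``exercise with limits.'' You instead slice $T$ along $x_3$ and apply an elementary two-dimensional triangle count in each layer, obtaining an explicit $O(bc)$ error term. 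Your route is more self-contained---no external theorem needed---and is the natural three-variable analogue of Proposition~\ref{2genO}; the paper's is terser. One small point worth making explicit in your write-up: your displayed error $O\bigl(g^2/((ab)(bc))+g^2/((ab)(ac))\bigr)$ must also absorb the Riemann-sum discrepancy between $\sum_k \mathrm{Area}(\text{slice }k)$ and the true volume, which is $O(g^2/(abc^2))=O(ab)$; since $ab\le bc$ this is already dominated by your second term, so the final $O(bc)$ bound is unaffected.
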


\begin{proof}
Let $2\le a< b< c$ be pairwise relatively prime positive integers and let $S = \langle bc,ac,ab\rangle$ be the supersymmetric numerical semigroup corresponding to $a,b,c$.  Proposition \ref{Prop_genus_super} implies that 
\[
g(S) = abc - \left(\frac{ab +ac+bc-1}{2}\right).
\]
To simplify the notation we write $g = g(S)$ for the rest of the proof.

The smallest Betti element of $S$ is $abc$.  Therefore, Lemmas \ref{lem_large_Betti} and \ref{lem_int_points} imply that $r(S)$ is one less than the number of integer points in the simplex $T$ with vertices $(0,0,0), \left(\frac{g}{bc},0,0\right)$,  $\left(0,\frac{g}{ac},0\right)$, and $ \left(0,0,\frac{g}{ab}\right)$.  This is at most the number of integer points in the simplex with vertices $(0,0,0),(a,0,0),(0,b,0)$, and $(0,0,c)$ and at least one less than the number of integer points in the simplex with vertices $(0,0,0), \left(\left\lfloor\frac{g}{bc}\right\rfloor,0,0\right),  \left(0,\left\lfloor\frac{g}{ac}\right\rfloor,0\right)$, and $ \left(0,0,\left\lfloor\frac{g}{ab}\right\rfloor\right)$.  A theorem of Xu and Yau gives an exact formula for the number of integer points in a right-angled simplex in $\R^3$ with integer vertices \cite{XuYau}.  In our setting, this result implies that the number of integer points in $T$ is approximately equal to the volume of $T$, which is $\frac{1}{3!} \frac{g^3}{a^2 b^2 c^2}$.  As $a,b$, and $c$ all go to infinity, this rational function is asymptotic to $\frac{abc}{6}$.  It is now an exercise with limits to complete the proof.
\end{proof}

We would like to extend this kind of analysis to supersymmetric numerical semigroups with larger embedding dimension.  However, once the embedding dimension is at least $4$ the situation becomes more complicated.  It is no longer true that the smallest Betti element of $S$ is larger than $g(S)$.  This leads to additional difficulties, as we have to count the number of factorizations for each element of $S$ of size at most $g(S)$.

\begin{prop}\label{Prop_count_factorizations}
Let $2\le a_1 < a_2 < \ldots < a_n$ be pairwise relatively prime integers and let $S=\langle q_1,q_2,...,q_n\rangle$ be the supersymmetric numerical semigroup corresponding to $a_1,\ldots, a_n$.  Suppose $x \in S$ and let $(x_1,\ldots, x_n) \in \mathcal{Z}(x)$ be a factorization of $x$.  Then, the number of factorizations of $x$ in $S$ is
\[
|\mathcal{Z}(x)| = \binom{n+\sum\limits_{i=1}^n\big\lfloor \frac{x_i}{a_i}\big\rfloor-1}{n-1}.
\]
\end{prop}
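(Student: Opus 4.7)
The plan is to show that every factorization of $x$ is determined, up to a vector of nonnegative integers summing to a fixed constant, by a fixed residue vector $(r_1,\ldots,r_n)$, and then to apply stars-and-bars.

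First I would establish the residue-rigidity step. Because $q_j = \prod_{k\neq j} a_k$, pairwise coprimality gives $\gcd(q_j, a_j) = 1$ while $a_j \mid q_i$ for every $i \neq j$. Reducing the identity $\sum_i y_i q_i = x$ modulo $a_j$ for any factorization $(y_1,\ldots,y_n) \in \mathcal{Z}(x)$ then gives $y_j q_j \equiv x \pmod{a_j}$, which uniquely determines $y_j \bmod a_j$. Call the common residue $r_j \in \{0,1,\ldots,a_j-1\}$; it depends only on $x$. So every factorization can be written uniquely as $y_j = r_j + t_j a_j$ with $t_j \in \mathbb{Z}_{\ge 0}$.

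Next I would use the defining Betti relation of a supersymmetric semigroup, namely $a_i q_i = A := a_1 a_2 \cdots a_n$ for all $i$. Substituting $y_j = r_j + t_j a_j$ into $\sum_j y_j q_j = x$ and using $a_j q_j = A$ yields
\[
\sum_{j=1}^n r_j q_j \;+\; A \sum_{j=1}^n t_j \;=\; x.
\]
Thus $\sum_j t_j = T$, where $T := \frac{x - \sum_j r_j q_j}{A}$ is a nonnegative integer depending only on $x$. Conversely, any $(t_1,\ldots,t_n) \in \mathbb{Z}_{\ge 0}^n$ with $\sum t_j = T$ produces a valid factorization $(r_1 + t_1 a_1, \ldots, r_n + t_n a_n) \in \mathcal{Z}(x)$. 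Hence $|\mathcal{Z}(x)|$ equals the number of weak compositions of $T$ into $n$ nonnegative parts, which is $\binom{T+n-1}{n-1}$ by stars-and-bars.

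Finally I would identify $T$ in terms of the given factorization $(x_1,\ldots,x_n)$. By the uniqueness of $r_j$, we must have $x_j \equiv r_j \pmod{a_j}$, so $x_j = r_j + \lfloor x_j/a_j\rfloor a_j$, giving $T = \sum_j \lfloor x_j/a_j\rfloor$. Combined with the count above this yields the claimed formula. No real obstacle is expected: the residue-rigidity argument is forced by the pairwise coprimality of the $a_i$, and the reduction to a single linear constraint on the shifts $t_j$ is exactly what the unique-Betti-element structure of supersymmetric semigroups gives us.
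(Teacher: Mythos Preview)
Your proof is correct and follows essentially the same route as the paper: decompose each factorization as a fixed residue vector $(r_1,\ldots,r_n)$ plus shifts $(t_1 a_1,\ldots,t_n a_n)$, observe via $a_j q_j = A$ that the shifts satisfy the single constraint $\sum_j t_j = T$ with $T = \sum_j \lfloor x_j/a_j\rfloor$, and finish with stars-and-bars. The only presentational difference is that you justify the residue-rigidity step directly by reducing $\sum_i y_i q_i = x$ modulo each $a_j$, whereas the paper phrases it in terms of the unique Betti element $A$ and the assertion that the reduced element $\sum_i r_i q_i$ has a unique factorization; your version is slightly more self-contained, but the underlying decomposition and count are identical.
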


\begin{proof}
Since $A = a_1 \cdots a_n$ is the unique Betti element of $S$, we see that 
\[
x = \left(\sum\limits_{i=1}^n\left\lfloor \frac{x_i}{a_i}\right\rfloor\right) A + \sum_{i=1}^n \left(x_i-\left\lfloor \frac{x_i}{a_i}\right\rfloor a_i \right) q_i.
\] 
It is clear that $\sum\limits_{i=1}^n \left(x_i-\left\lfloor \frac{x_i}{a_i}\right\rfloor a_i\right) q_i$ has a unique factorization in $S$.  Elements of $\mathcal{Z}(x)$ are in bijection with elements of $\mathcal{Z}\left(\left(\sum\limits_{i=1}^n\big\lfloor \frac{x_i}{a_i}\big\rfloor\right) A\right)$.  These factorizations are  in bijection with tuples $(k_1,\ldots, k_n)$ where each $k_i \ge 0$ and $k_1+\cdots + k_n = \sum\limits_{i=1}^n\big\lfloor \frac{x_i}{a_i}\big\rfloor$. 
\end{proof}
It should be possible to use this result together with the idea of counting integer points in certain right-angled simplices to approximate the ordinarization number of supersymmetric numerical semigroups with larger embedding dimension.  We leave this as a direction for future work.

We close this section by proving an analogue of Proposition \ref{2genO} for supersymmetric numerical semigroups $S = \langle q_1, \ldots, q_n\rangle$ with $n \ge 3$.  A key idea is that while it is no longer the case that each element in $S$ of size at most $g(S)$ has a unique factorization in $S$, we can still count elements in $S$ by associating each element to its factorization that uses the largest number of copies of $q_1$.
\begin{prop}\label{Theorem_ord_supersym}
Let $2\le a_1 < a_2 < \ldots < a_n$ be pairwise relatively prime integers and let $S=\langle q_1,q_2,...,q_n\rangle$ be the supersymmetric numerical semigroup corresponding to $a_1,\ldots, a_n$.  For ease of notation, we write $g = g(S)$.  The ordinarization number of $S$ is 
\begin{align*}
r(S)&=-1+\sum_{b_1=0}^{\left\lfloor\frac{g}{q_1}\right\rfloor}\sum_{b_2=0}^{\min\left\{ \left\lfloor\frac{g-b_1 q_1}{q_2}\right\rfloor,\ a_2-1\right\}}\cdots\sum_{b_n=0}^{\min\big\{ \big\lfloor\frac{g-\sum\limits_{i=1}^{n-1}b_i q_i}{q_n}\big\rfloor,\ a_n-1\big\}} 1.
\end{align*}
\end{prop}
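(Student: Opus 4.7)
By Proposition~\ref{Prop_count_small}, $r(S) = \#(S \cap \{1,2,\ldots,g\})$, so the task is to count nonzero elements of $S$ of size at most $g$. Following the hint in the paragraph preceding the statement, the strategy is to pick a canonical representative for each element of $S$ and enumerate these representatives by iterated summation on $(b_1,\ldots,b_n)$.

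Call a factorization $(b_1,\ldots,b_n) \in \mathcal{Z}(x)$ \emph{normal} if $b_i \le a_i - 1$ for every $i \ge 2$ (with no restriction on $b_1$). First I would show that every $x \in S$ has a unique normal factorization. \emph{Existence:} start from any factorization; whenever some $b_i \ge a_i$ with $i \ge 2$, replace $a_i$ copies of $q_i$ by $a_1$ copies of $q_1$, which is valid since $a_i q_i = a_1 q_1 = A$. Each such trade strictly increases $b_1$, and $b_1$ is bounded above by $x/q_1$, so the procedure terminates at a normal factorization. \emph{Uniqueness:} since $A = a_1\cdots a_n$ is the unique Betti element of $S$, any two factorizations of $x$ differ by a sequence of trades $a_i q_i \leftrightarrow a_j q_j$, so it suffices to show that the $\prod_{i\ge 2} a_i = q_1$ tuples $(b_2,\ldots,b_n)$ with $0 \le b_i \le a_i - 1$ produce $q_1$ distinct residues modulo $q_1$. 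This is exactly the statement that $\Ap(S;q_1) = \{b_2 q_2 + \cdots + b_n q_n : 0 \le b_i \le a_i - 1\}$, which holds because the Apéry set has exactly $q_1$ elements (one per residue class) and the displayed set already contains $q_1$ elements of $S$ that are pairwise incongruent mod $q_1$ (any congruence would force a nontrivial relation among the $q_i$ not coming from the unique Betti relation).

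With normal factorizations established as canonical representatives, the elements of $S \cap \{0,1,\ldots,g\}$ are in bijection with tuples $(b_1,\ldots,b_n) \in \Z_{\ge 0}^n$ satisfying $0 \le b_i \le a_i - 1$ for $i \ge 2$ and $b_1 q_1 + \cdots + b_n q_n \le g$. Enumerating these by choosing $b_1$ first in the range $0 \le b_1 \le \lfloor g/q_1 \rfloor$, then $b_2$ in $0 \le b_2 \le \min\{\lfloor (g-b_1 q_1)/q_2 \rfloor,\ a_2 - 1\}$, and so on down to $b_n$, produces the nested sum in the statement. Finally, subtracting $1$ removes the all-zero tuple corresponding to $0 \in S$, which is excluded from $r(S)$.

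The main obstacle is justifying uniqueness of the normal factorization, i.e., that the trading procedure has a well-defined terminal state. The cleanest route is the Apéry-set identity above, which follows from the fact that the minimal presentation of a supersymmetric semigroup is generated by the binomials $a_i q_i - a_1 q_1$ for $i \ge 2$; any alternative argument must appeal to the single-Betti-element property in some form. Once uniqueness is in hand, the rest of the proof is bookkeeping.
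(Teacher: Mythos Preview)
Your proposal is correct and follows essentially the same approach as the paper. The paper identifies the canonical representative of each $x\in S$ as the factorization maximizing $b_1$ and writes down the explicit normal form $\bigl(x_1+\sum_{i\ge 2}\lfloor x_i/a_i\rfloor\,a_1,\ x_2-\lfloor x_2/a_2\rfloor a_2,\ldots\bigr)$, whereas you characterize the same representative by the constraints $b_i\le a_i-1$ for $i\ge 2$ and justify uniqueness through the Ap\'ery set of $S$ with respect to $q_1$; these are two phrasings of the same fact, and the subsequent enumeration of tuples is identical.
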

\noindent We note that when $n=2$ this formula is equal to the formula from Proposition \ref{2genO}.

\begin{proof}
Since $A = a_1 \cdots a_n$ is the unique Betti element of $S$, each element $x\in S$ has a unique factorization that uses the largest number of the first generator $q_1$.  That is, if $x \in S$ and $(x_1,\ldots, x_n) \in \mathcal{Z}(x)$, then we identify $x$ with the factorization 
\[
\left(x_1+\sum\limits_{i=2}^n \left\lfloor\frac{x_i}{a_i}\right\rfloor a_1,x_2-\left\lfloor\frac{x_2}{a_2}\right\rfloor a_2,...,x_n-\left\lfloor\frac{x_n}{a_n}\right\rfloor a_n\right) \in \mathcal{Z}(x).
\]
Factorizations in this form are distinguished by the fact that for each $i \in \{2,\ldots, n\}$ the $i$\textsuperscript{th} entry is less than $a_i$.

We see that $(b_1, b_2,\ldots, b_n) \in \Z_{\ge 0}^n$ corresponds to a factorization of an element of $S$ of size at most $g$ if and only if $\sum\limits_{i=1}^{n} b_i q_i \le g$.  In such a factorization, we have $b_1 \le \left\lfloor \frac{g}{q_1}\right\rfloor$.  We count elements in $S$ of size at most $g$ by counting the number of choices for $(b_1, b_2,\ldots, b_n)$ satisfying $\sum\limits_{i=1}^{n} b_i q_i \le g$ and $0 \le b_i < a_i$ for each $i \in \{2,\ldots, n\}$.

We choose the entries $b_1, b_2,\ldots, b_n$ in order.  Once we have chosen $b_1$, we see that $\sum\limits_{i=1}^{n} b_i q_i \le g$ implies $b_2 \le  \left\lfloor \frac{g-b_1 q_1}{q_2}\right\rfloor$.  Continuing in this way, we see that once we have chosen $b_1,\ldots, b_j$, we must have 
\[
b_{j+1} \le \bigg\lfloor \frac{g- \sum\limits_{i=1}^{j} b_i q_j}{q_{j+1}}\bigg\rfloor.
\]
Counting the possible entries for the tuple $(b_1,\ldots, b_n)$ in this way completes the proof.
\end{proof}
One could attempt to approximate the formula in Proposition \ref{Theorem_ord_supersym} by computing the volume of the polytope in $\R^n$ defined by the inequalities $x_1, \ldots, x_n \ge 0,\ \sum_{i=1}^n q_i x_i \le g(\langle q_1,\ldots, q_n \rangle)$, and $0 \le x_i \le a_i - 1$ for each $i \in \{2,3,\ldots, n\}$.  We do not pursue this further here.

\begin{example}
Let $(a_1,a_2,a_3,a_4) = (3,5,7,11)$ and consider the corresponding supersymmetric semigroup, $S=\langle 105,165,231,385 \rangle$, where we have reordered the generators $q_1,q_2,q_3,q_4$.  Proposition \ref{Prop_genus_super} implies that 
\[
g(S) = \frac{1}{2}(1+3(1155)-105-165-231-385) = 1290.
\]
By Proposition \ref{Theorem_ord_supersym}, the ordinarization number of $S$ is 
\[
-1+\sum_{b_1=0}^{\big\lfloor\frac{1290}{105}\big\rfloor}\sum_{b_2=0}^{\min\big\{ \big\lfloor\frac{1290-105b_1}{165}\big\rfloor,6\big\}}
\sum_{b_3=0}^{\min\big\{ \big\lfloor\frac{1290-105b_1-165b_2}{231}\big\rfloor,4\big\}}
\sum_{b_4=0}^{\min\big\{ \big\lfloor\frac{1290-105b_1-165b_2-231b_3}{385}\big\rfloor,2\big\}}1 =228.
\]
\end{example}

\section{Ordinarization Numbers of Numerical Semigroups Generated by an Interval}\label{sec:interval}

The minimal generating set of every numerical semigroup with embedding dimension $2$ is given by an arithmetic progression.  The main result of this section concerns ordinarization numbers of numerical semigroups of larger embedding dimension generated by the simplest kind of arithmetic progression, an interval of positive integers.  We begin with a simple description of the elements of such a semigroup.

Let $a$ and $x$ be positive integers with $x \le a-1$ and $S = \langle a,a+1,\ldots, a+x\rangle$.  The positive elements of $S$ are exactly the union of the sets 
\[
\{a,a+1,\ldots, a+x\},\ \{2a,2a+1,\ldots, 2a+2x\},\{3a,3a+1,\ldots, 3a+3x\},\ldots.
\]
Let $n$ be the smallest positive integer such that $nx \ge a-1$.  That is, let $n = \left\lceil \frac{a-1}{x} \right\rceil$.  Since $na + nx \ge na+a-1$, we see that $S$ contains all integers greater than or equal to $na$.  The definition of $n$ implies that $(n-1)a + (n-1)x < (n-1)a+a-1$, so $F(S) = (n-1)a + (a-1)$.  Counting the number of gaps in each interval $\{ka, ka+1,\ldots, ka+a-1\}$ leads to the following formula for the genus of $S$.
\begin{lemma}\label{d1}
Let $a$ and $x$ be positive integers with $x \le a-1$.  Let $S = \langle a,a+1,\ldots, a+x\rangle$ and write $n = \left\lceil \frac{a-1}{x} \right\rceil$.   Then 
\begin{enumerate}
\item $F(S) = (n-1)a + (a-1)$, and 
\item $g(S) = na-\frac{n(n-1)}{2}x-n$.
\end{enumerate}
\end{lemma}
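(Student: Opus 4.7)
The plan is to leverage the explicit description of the positive elements of $S$ given in the paragraph preceding the lemma and to verify both formulas by direct enumeration, organizing the integers from $1$ up to $F(S)$ into blocks of length $a$.

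First I would justify that the positive elements of $S$ are exactly the union of the blocks $B_k = \{ka, ka+1, \ldots, ka+kx\}$ for $k\ge 1$. Any nonnegative integer combination of $k$ generators from $\{a, a+1, \ldots, a+x\}$ lies in $B_k$, and conversely each integer $ka+j$ with $0\le j\le kx$ can be written as a sum of $k$ generators by decomposing $j$ in terms of $x$'s and smaller remainders. This reduces the entire analysis to counting integers in a sequence of nonoverlapping blocks.

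For part (1), recall $n = \lceil (a-1)/x\rceil$ is the smallest positive integer with $nx\ge a-1$. Then $B_n = \{na, \ldots, na+nx\}$ has at least $a$ consecutive elements, so it meets $B_{n+1}$ with no intervening gap, and by induction the blocks $B_n, B_{n+1}, \ldots$ cover every integer $\ge na$. Hence $S$ contains $\{na, na+1, \ldots\}$. On the other hand, the minimality of $n$ gives $(n-1)x < a-1$, so $(n-1)a + (n-1)x < na-1$, showing $na-1\notin B_{n-1}$ and (being below $na$) $na-1\notin S$. Therefore $F(S) = na-1 = (n-1)a + (a-1)$.

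For part (2), partition $\{1,2,\ldots,F(S)\}$ into the initial interval $\{1,\ldots,a-1\}$ and the intervals $I_k = \{ka, ka+1,\ldots,(k+1)a-1\}$ for $1\le k\le n-1$. The initial interval contributes $a-1$ gaps since the smallest nonzero element of $S$ is $a$. Within $I_k$, the elements of $S$ are precisely $B_k = \{ka,\ldots,ka+kx\}$, and the inequality $kx \le (n-1)x < a-1$ ensures that $B_k$ fits strictly inside $I_k$, contributing $kx+1$ semigroup elements and therefore $a-kx-1$ gaps. Summing yields
\[
g(S) = (a-1) + \sum_{k=1}^{n-1}(a - kx - 1) = n(a-1) - x\cdot\frac{n(n-1)}{2} = na - \frac{n(n-1)}{2}x - n,
\]
as claimed. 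No step is a real obstacle; the only point requiring care is the inequality $kx < a$ for $1\le k\le n-1$, which is exactly the content of the minimality defining $n$.
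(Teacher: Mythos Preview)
Your proof is correct and follows essentially the same approach as the paper: the paper's argument is the paragraph immediately preceding the lemma, which describes the positive elements of $S$ as the union of the intervals $\{ka,\ldots,ka+kx\}$, deduces $F(S)=na-1$ from the minimality of $n$, and then says ``Counting the number of gaps in each interval $\{ka, ka+1,\ldots, ka+a-1\}$ leads to the following formula for the genus of $S$.'' Your write-up simply makes this last sentence explicit by carrying out the sum $(a-1)+\sum_{k=1}^{n-1}(a-kx-1)$.
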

\noindent Keeping the notation of this lemma, we note that when $x = a-1,\ S$ is the ordinary numerical semigroup of genus $a-1$.  For the rest of this section we suppose that $x < a-1$, which means that $n \ge 2$.

Proposition \ref{Prop_count_small} implies that in order to determine $r(S)$ for a semigroup $S$ generated by an interval we need to count elements of size at most $g(S)$ in the union of intervals $\{a,a+1,\ldots, a+x\},\ \{2a,2a+1,\ldots, 2a+2x\},\{3a,3a+1,\ldots, 3a+3x\},\ldots$.  It is helpful to first determine the interval that contains $g(S)$.  This leads to cases based on the parity of $n$.
\begin{thm}\label{thm_ord_interval}
Let $a$ and $x$ be positive integers with $x < a-1$. Let $S = \langle a,a+1,\ldots, a+x\rangle$ and write $n = \left\lceil \frac{a-1}{x} \right\rceil$.  Then
\[
r(S) = 
\begin{cases}
\frac{n^2-1}{8} x + \frac{n-1}{2} & \text{ if } n \text{ is odd}\\
\frac{-n(3n-2)}{8}x+\frac{n}{2}(a-1) & \text{ if } n \text{ is even}.
\end{cases}.
\]
\end{thm}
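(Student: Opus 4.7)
The plan is to invoke Proposition~\ref{Prop_count_small}, which gives $r(S) = \#(S \cap \{1,2,\ldots, g\})$ where $g = g(S)$.  Since $F(S) = na-1$ by Lemma~\ref{d1}, every positive element of $S$ below $na$ lies in one of the disjoint intervals
\[
I_k = \{ka, ka+1,\ldots, ka+kx\},\qquad k=1,2,\ldots, n-1,
\]
and disjointness holds because for $1 \le k \le n-1$ we have $kx \le (n-1)x < a-1$, so the gap of $a-kx-1$ integers between the top of $I_k$ and the bottom of $I_{k+1}$ is strictly positive.  Each $I_k$ has $kx+1$ elements.  Since $g < na$, computing $r(S)$ reduces to locating $g$ among the $I_k$'s (or the gaps between them) and then summing.

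The key inputs for locating $g$ are the inequalities $a-(n-1)x \ge 2$ and $a-nx \le 1$.  The second is immediate from $nx \ge a-1$.  For the first, $a-(n-1)x = 1$ would force $(a-1)/x = n-1$ to be an integer, giving $\lceil (a-1)/x\rceil = n-1$ and contradicting the definition of $n$; since $(n-1)x < a-1$ also rules out $a-(n-1)x \le 0$, we conclude $a-(n-1)x \ge 2$.  Using $g = na - n - \binom{n}{2}x$ from Lemma~\ref{d1}, direct manipulation yields
\[
g - \tfrac{n}{2}a \;=\; \tfrac{n}{2}\bigl(a-(n-1)x\bigr) - n, \qquad \tfrac{n}{2}(a+x) - g \;=\; n - \tfrac{n}{2}\bigl(a-nx\bigr).
\]
For even $n$, both right-hand sides are nonnegative, so $g \in I_{n/2}$.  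For odd $n$, the analogous computation gives
\[
g - \tfrac{n-1}{2}(a+x) \;=\; \tfrac{n+1}{2}\bigl(a-(n-1)x\bigr) - n > 0 \quad\text{and}\quad \tfrac{n+1}{2}\,a - g \;=\; n - \tfrac{n-1}{2}\bigl(a-nx\bigr) > 0,
\]
placing $g$ strictly in the gap between $I_{(n-1)/2}$ and $I_{(n+1)/2}$.

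Once $g$ is located, the counting step is a routine sum of arithmetic progressions.  For odd $n$, every element of $S$ up to $g$ is captured by the first $(n-1)/2$ complete intervals, so
\[
r(S) = \sum_{k=1}^{(n-1)/2}(kx+1) = \frac{n^2-1}{8}\,x + \frac{n-1}{2}.
\]
For even $n$, the count combines the first $n/2-1$ complete intervals with the partial contribution from $I_{n/2}$:
\[
r(S) = \sum_{k=1}^{n/2-1}(kx+1) + \bigl(g - \tfrac{n}{2}\,a + 1\bigr),
\]
and substituting the formula for $g$ from Lemma~\ref{d1} and collecting like terms produces the stated closed form.  I expect no real obstacle in this argument; the one subtle point is the bound $a-(n-1)x \ge 2$, which is exactly what keeps $g$ inside the expected interval and is strong enough to handle the tightest cases.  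Everything else is bookkeeping.
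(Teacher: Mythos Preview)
Your argument is correct and in fact cleaner than the paper's. Both proofs start from Proposition~\ref{Prop_count_small} and the decomposition of $S\cap[1,na-1]$ into the blocks $I_k=\{ka,\ldots,ka+kx\}$; the difference is in how $g$ is located. The paper only pins down $\frac{n-1}{2}a<g<\frac{n+1}{2}a$ (odd) or $\frac{n}{2}a\le g<(\frac{n}{2}+1)a$ (even), so it still has to split on whether $g$ is itself a gap, write $r(S)$ as the minimum of two expressions, and then prove which term attains the minimum. Your key observation $a-(n-1)x\ge2$ (together with $a-nx\le1$) is exactly what is needed to resolve this dichotomy up front: it forces $g$ into the gap above $I_{(n-1)/2}$ when $n$ is odd and into $I_{n/2}$ when $n$ is even, so the min disappears and the count is a single sum. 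That is a genuine simplification.

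One caution on the last line of the even case. If you actually carry out the substitution, the constant term is
\[
\frac{n-2}{2}+\Bigl(\frac{n}{2}a-n+1\Bigr)=\frac{n}{2}a-\frac{n}{2}=\frac{n(a-1)}{2},
\]
not $\frac{n}{2}a-n+1$ as printed in the theorem. For $n=2$ the two expressions coincide, but for $n\ge4$ they differ: with $a=9$, $x=2$ (so $n=4$ and $g=20$) one has $S\cap\{1,\ldots,20\}=\{9,10,11,18,19,20\}$, giving $r(S)=6$, which matches your computation, whereas the stated formula gives $5$. So your proof is sound; it simply reveals a simplification slip in the paper's even-$n$ formula rather than reproducing it.
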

\noindent It is a straightforward exercise to prove that when $x = 1$ this formula is consistent with the formula given in Proposition \ref{2genO}.  

\begin{proof}
To simplify the notation, throughout the proof we write $g = g(S)$.  We have $n \le \frac{a-1}{x} + \frac{x-1}{x}$, which implies $x \le \frac{a-2}{n-1}$.  Since $g= na-\frac{n(n-1)}{2}x-n$ and $\frac{a-1}{n}\leq x < \frac{a-2}{n-1}$, it follows that 
\[
na-\frac{n(a-1)}{2}-n = \frac{n(a-1)}{2}<g \leq na-\frac{(n-1)(a-1)}{2}-n= \frac{(a-1)(n+1)}{2}.
\]

We first consider the case where $n$ is odd.  Note that $\frac{na-n}{2}>\frac{na-a}{2}$.  We see that 
\[
\left(\frac{n-1}{2} \right) a < g < \left(\frac{n+1}{2}\right) a.
\]
We first count the number of elements of $S$ in each interval $\{ka,ka+1,\ldots, ka+(a-1)\}$ where $0 \le k < \frac{n-1}{2}$.  The total number of these elements is $\sum\limits_{\ell=1}^{\frac{n-3}{2}}(\ell x+1)$.  

We will prove that $g > \frac{n-1}{2} a + \frac{n-1}{2} x$ and therefore $g$ is a gap of $S$.  This will imply that the number of elements of $S$ in the interval 
\[
\left\{\frac{n-1}{2} a, \frac{n-1}{2} a +1,\ldots, \frac{n-1}{2} a + \frac{n-1}{2} x\right\}
\]
that are of size at most $g$ is $\frac{n-1}{2} x +1$.  Simplifying 
\[
\sum\limits_{\ell=1}^{\frac{n-1}{2}}(\ell x+1) = \frac{n^2-1}{8} x + \frac{n-1}{2} 
\]
completes the proof in this case.

We see that 
\[
g = na - \frac{n(n-1)}{2} x - n > \frac{n-1}{2} a + \frac{n-1}{2} x 
\]
if and only if 
\[
\frac{n+1}{2} a - n > \frac{(n+1)(n-1)}{2} x.
\]
The right-hand side is clearly a nondecreasing function of $x$.  Since $x \le \frac{a-2}{n-1}$, we see that 
\[
\frac{(n+1)(n-1)}{2} x \le \frac{(n+1)(a-2)}{2} < \frac{n+1}{2} a - n,
\]
completing the proof that $g > \frac{n-1}{2} a + \frac{n-1}{2} x$.

Now suppose that $n$ is even.   Since $x \le \frac{a-2}{n-1}$, we have
\[
g = na - \frac{n(n-1)}{2} x - n \ge na - \frac{n(a-2)}{2} - n = \frac{n}{2} a.
\]
Since $x \ge \frac{a-1}{n}$, we have
\[
g \le \frac{(a-1)(n+1)}{2} < \left(\frac{n}{2}+1\right) a. 
\]
We follow the same strategy as in the case where $n$ is odd.  We first count elements of $S$ in each interval $\{ka,ka+1,\ldots, ka+(a-1)\}$ where $0 \le k \le \frac{n}{2}-1$.  The total number of these elements is $\sum\limits_{\ell=1}^{\frac{n}{2}-1}(\ell x+1)$.  

We will prove that $g \le \frac{n}{2} a + \frac{n}{2} x$ and therefore $g$ is not a gap of $S$. This will imply that the number of elements of $S$ in the interval 
\[
\left\{\frac{n}{2} a, \frac{n}{2} a +1,\ldots, \frac{n}{2} a + \frac{n}{2} x\right\}
\]
that are of size at most $g$ is $g - \frac{n}{2} a + 1$.  Simplifying 
\[
\left(\sum\limits_{\ell=1}^{\frac{n}{2}-1}(\ell x+1) \right)+ g - \frac{n}{2} a + 1
=
\frac{(n-2)n}{8} x + \left(\frac{n}{2}-1\right) + \left(na-\frac{n(n-1)}{2} x - n\right) - \frac{n}{2} a + 1
\]
completes the proof in this case.

We see that 
\[
g = na - \frac{n(n-1)}{2} x - n \le 
 \frac{n}{2} a + \frac{n}{2} x
\]
if and only if 
\[
\frac{n}{2} a - n \le \frac{n^2}{2} x.
\]
Dividing by $n$, this holds if and only if 
\[
\frac{a}{2} - 1 \le \frac{n}{2} x.
\]
The right-hand side is clearly a nondecreasing function of $x$.  Since $x \ge \frac{a-1}{n}$, we have 
\[
\frac{n}{2} x \ge \frac{a-1}{2}  \ge \frac{a}{2}  - 1,
\]
completing the proof that $g \le \frac{n}{2} a + \frac{n}{2} x$.
\end{proof}

We highlight some simple cases of this result.
\begin{example}
Suppose that $S = \langle a,a+1,\ldots, a+x\rangle$ where $\frac{a-1}{2}\le x < a-1$.  This is the case where $n = 2$ in the statement of Theorem \ref{thm_ord_interval}. We have 
\[
g(S) = a-1 + (a-1-x)-1 = 2a-x-2 < 2a.
\]  
The number of elements in the interval $\{a,a+1,\ldots, a+x\}$ that are at most $g(S)$ is $a-x-1$.  Therefore, $r(S) = a-x-1$, which is consistent with the formula given in Theorem \ref{thm_ord_interval} for $n=2$.  
\end{example}

\begin{example}
Suppose that $S=\langle a,a+1,...,a+x\rangle$ where $\frac{a-1}{3}\leq x< \frac{a-1}{2}$. This is the case where $n=3$ in the statement of Theorem \ref{thm_ord_interval}.  We can count elements in $\{a,a+1,\ldots, a+x\}$ and $\{2a,2a+1,\ldots, 2a+2x\}$ of size at most $g(S)$ directly, or just apply the formula in Theorem \ref{thm_ord_interval} to see that $r(S) = x+1$.
\end{example}

\section*{Acknowledgments}
The authors thank Christopher O'Neill for helpful discussions related to the material in Section \ref{sec:ng2}.  They thank Maria Bras-Amor\'os, Matthias Beck, and Christopher O'Neill for helpful comments on an earlier draft of this paper.  The second author was supported by NSF grant DMS 2154223.


\begin{thebibliography}{00}

\bibitem{AlmeidaBernardini} G. Almeida Filho and M. Bernardini, On pure $\kappa$-sparse gapsets. Semigroup Forum 104 (2022), no. 2, 213--227.

\bibitem{Bacher} R. Bacher, Generic numerical semigroups. (2021), 53 pp. arXiv: 2105.04200.

\bibitem{BeckRobins} M. Beck and S. Robins, Computing the Continuous Discretely. Integer-Point Enumeration in Polyhedra. Undergrad. Texts Math., 2nd ed., Springer, New York 2015.

\bibitem{BeckRobinsZacks} M. Beck, S. Robins, and S. Zacks, Higher-dimensional Dedekind sums and their bounds arising from the discrete diagonal of the $n$-cube. Adv. in Appl. Math. 36 (2006), no. 1, 1-29.

\bibitem{Bras-Amoros_fib} M. Bras-Amor\'os, Fibonacci-like behavior of the number of numerical semigroups of a given genus, Semigroup Forum 76 (2008), 379-384.

\bibitem{Bras-Amoros_towards} M. Bras-Amor\'os, Bounds on the number of numerical semigroups of a given genus, J. Pure Appl. Alg. 213 (2009), 997-1001.

\bibitem{Bras-Amoros_ord} M. Bras-Amor\'os, The ordinarization transform of a numerical semigroup and semigroups with a large number of intervals.  J. Pure Appl. Algebra 216 (2012), no. 11, 2507--2518.


\bibitem{BA_draw} M. Bras-Amor\'os. Drawsgtree. GitHub repository, 2022, 2023, 2024.\\
 \url{https://github.com/mbrasamoros/drawsgtree}
 
 \bibitem{BA_Exploring} M. Bras-Amor\'os. Exploring the unleaved tree of numerical semigroups up to a given genus. arXiv:2503.14664 (2025) 15 pp.

\bibitem{normaliz} W. Bruns, R. Sieg, and C. S\"oger. Normaliz 2013-2016. \emph{Algorithmic and experimental methods in algebra, geometry, and number theory}, 123--146. Springer, Cham, 2017.

\bibitem{Normaliz2} W. Bruns and M. Horn, Normaliz 3.10.5. Available Online: \\
\url{https://github.com/Normaliz/Normaliz/blob/master/doc/Normaliz.pdf}.  Accessed May 13, 2025.

\bibitem{Normaliz_system} W. Bruns, B. Ichim, C. S\"oger and U. von der Ohe: Normaliz. Algorithms for rational cones and affine monoids. Available Online: \url{https://www.normaliz.uni-osnabrueck.de}.

\bibitem{DelgadoEliahouFromentin} M. Delgado, S. Eliahou, and J. Fromentin, A verification of Wilf's conjecture up to genus 100. J. Algebra 664 (2025), 150-163.

\bibitem{EliahouFromentin} S. Eliahou and J. Fromentin, Gapsets and numerical semigroups. J. Combin. Theory Ser. A 169 (2020), 105-129, 19 pp.

\bibitem{GSOR} P.A. Garc\'ia-S\'anchez, I. Ojeda, and J.C. Rosales, Affine semigroups having a unique Betti element. J. Algebra Appl. 12 (2013), no. 3, 1250177, 11 pp.

\bibitem{GarciaSanchez_Rosales} P.A. Garc\'ia-S\'anchez and J.C. Rosales, Numerical Semigroups. New York: Springer, 2009.

\bibitem{KaplanJPAA} N. Kaplan, Counting numerical semigroups by genus and some cases of a question of Wilf. J. Pure Appl. Algebra 216 (2012), no. 5, 1016-1032.

\bibitem{KaplanSurvey} N. Kaplan, Counting numerical semigroups. Amer. Math. Monthly 124 (2017), no. 9, 862--875.

\bibitem{kunz} E.~Kunz, \emph{\"Uber die Klassifikation numerischer Halbgruppen}, Regensburger Mathematische Schriften \textbf{11}, 1987.

\bibitem{ODorney} E. O’Dorney, Degree asymptotics of the numerical semigroup tree, Semigroup Forum 87 (2013) 601–616.

\bibitem{StronglyFlat} M. Raczunas and P. Chrząstowski-Wachtel, A Diophantine problem of Frobenius in terms of the least common multiple, Discrete Math. 150 (1996), 347-357.

\bibitem{kunzcoords} J.~Rosales, P.~Garc\'ia-S\'anchez, J.~Garc\'ia-Garc\'ia and M.~Branco, \emph{Systems of inequalities and numerical semigroups}, J.~Lond.\ Math.\ Soc.\ \textbf{65} (2002), no.~3, 611--623.

\bibitem{SageMath} SageMath, the Sage Mathematics Software System (Version 10.6),
   The Sage Developers, 2025,\\
   \url{https://www.sagemath.org}.

\bibitem{StanleyEC1} R. Stanley, Enumerative Combinatorics. Volume 1. Second Edition Cambridge St. Adv. Math., 49 Cambridge University Press, Cambridge, 2012. xiv+626 pp.

\bibitem{XuYau} Y.-J. Xu and S. S.-T. Yau, A sharp estimate of the number of integral points in a tetrahedron, J. Reine Angew. Math. 423 (1992), 199-219.



\bibitem{Zhai} A. Zhai, Fibonacci-like growth of numerical semigroups with a given genus, Semigroup Forum 86 (2013) 634–662.

\bibitem{Zhu} D. Zhu, Sub-Fibonacci behavior in numerical semigroup enumeration. Comb. Theory 3 (2023), no. 2, Paper No. 10, 26 pp.

\end{thebibliography}
\end{document}